\definecolor{lightblue}{rgb}{.1,0.35,.8}
\definecolor{diagramred}{rgb}{0.78039215686,0,0}
\definecolor{diagramblue}{rgb}{0,0,0.78039215686}
\definecolor{diagramgreen}{rgb}{0,0.39215686274,0}
\newtheorem*{rep@theorem}{\rep@title}
\newcommand{\newreptheorem}[2]{%
\newenvironment{rep#1}[1]{%
 \def\rep@title{#2 \ref{##1}}%
 \begin{rep@theorem}}%
 {\end{rep@theorem}}}
\newtheorem{proposition}{Proposition}[section]
\newtheorem{theorem}[proposition]{Theorem}
\newtheorem{corollary}[proposition]{Corollary}
\newtheorem{lemma}[proposition]{Lemma}
\newtheorem*{theorem*}{Theorem}
\newtheorem*{proposition*}{Proposition}
\newtheorem*{lemma*}{Lemma}
\newtheorem*{corollary*}{Corollary}
\theoremstyle{definition}
\newtheorem{definition}[proposition]{Definition}
\newtheorem{question}[proposition]{Question}
\theoremstyle{remark}
\newtheorem{remark}[proposition]{Remark}
\newtheorem*{claim*}{Claim}
\newtheorem*{notation*}{Notation}
\newcommand{\bdry}{\partial}
\newcommand{\sm}{\smallsetminus}
\newcommand{\N}{\mathbb{N}}
\newcommand{\Z}{\mathbb{Z}}
\newcommand{\lk}{\operatorname{lk}}
\newcommand{\D}{\mathcal{D}}
\renewcommand{\int}{\operatorname{int}}
\newcommand{\RR}{\mathbb{R}}
\newcommand{\Sum}{\displaystyle \sum  }
\renewcommand{\Cup}{\displaystyle \bigcup  }
\newcommand{\onto}{\twoheadrightarrow}
\newcommand{\into}{\hookrightarrow}
\newcommand{\pref}[1]{(\ref{#1})}
\newcommand{\lineseg}[2]{\overline{#1#2}}
\keywords{link concordance, Whitney trick, Whitney tower}
\begin{document}
\title[The relative Whitney trick]{The relative Whitney trick and its applications}

\author{Christopher W.\ Davis}
\address{Department of Mathematics, University of Wisconsin--Eau Claire}
\email{daviscw@uwec.edu}

\author{Patrick Orson}
\address{Department of Mathematics, ETH Z\"{u}rich}
\email{patrick.orson@math.ethz.ch}

\author{JungHwan Park}
\address{Department of Mathematical Sciences, KAIST}
\email{jungpark0817@kaist.ac.kr}

\date{\today}

\def\subjclassname{\textup{2020} Mathematics Subject Classification}
\expandafter\let\csname subjclassname@1991\endcsname=\subjclassname
\expandafter\let\csname subjclassname@2000\endcsname=\subjclassname
\subjclass{57K10, 57N70}

\begin{abstract}We introduce a geometric operation, which we call the relative Whitney trick, that removes a single double point between properly immersed surfaces in a $4$-manifold with boundary. Using the relative Whitney trick we prove that every link in a homology sphere is homotopic to a link that is topologically slice in a contractible topological $4$-manifold. We further prove that any link in a homology sphere is order $k$ Whitney tower concordant to a link in $S^3$ for all $k$. Finally, we explore the minimum Gordian distance from a link in $S^3$ to a homotopically trivial link. Extending this notion to links in homology spheres, we use the relative Whitney trick to make explicit computations for 3-component links and  establish bounds in general.

\end{abstract}

\maketitle
\section{Introduction}
The Whitney trick is a fundamental technique of geometric topology and its general failure in 4-manifolds is widely cited as the reason that topology in this dimension is so interesting and unusual. In ambient dimension four, the (topological) Whitney trick seeks to remove a pair of oppositely signed transverse intersection points between two locally flat immersed surfaces. In this article, we will introduce a geometric technique called the \emph{relative Whitney trick} that removes a single point of intersection between properly immersed locally flat surfaces in a 4-manifold with boundary. The details of the relative Whitney trick are given in Section~\ref{sect: rel Whit trick}, but we sketch the procedure now. 
Suppose $S_1$ and $S_2$ are surfaces in a 4-manifold $W$ and $p\in S_1\cap S_2$. We find an 
immersed disk with an embedded arc of its boundary lying on the boundary of the $4$-manifold and use it to guide a regular homotopy of $S_1$ that removes the point of intersection $p$, at the cost of changing $\bdry S_1$ by a homotopy along that arc in the boundary; see Figure~\ref{fig: rel Whitney trick}.

\begin{figure}[h]
\begin{picture}(310,80)
\put(0,5){\includegraphics[width=37.5mm]{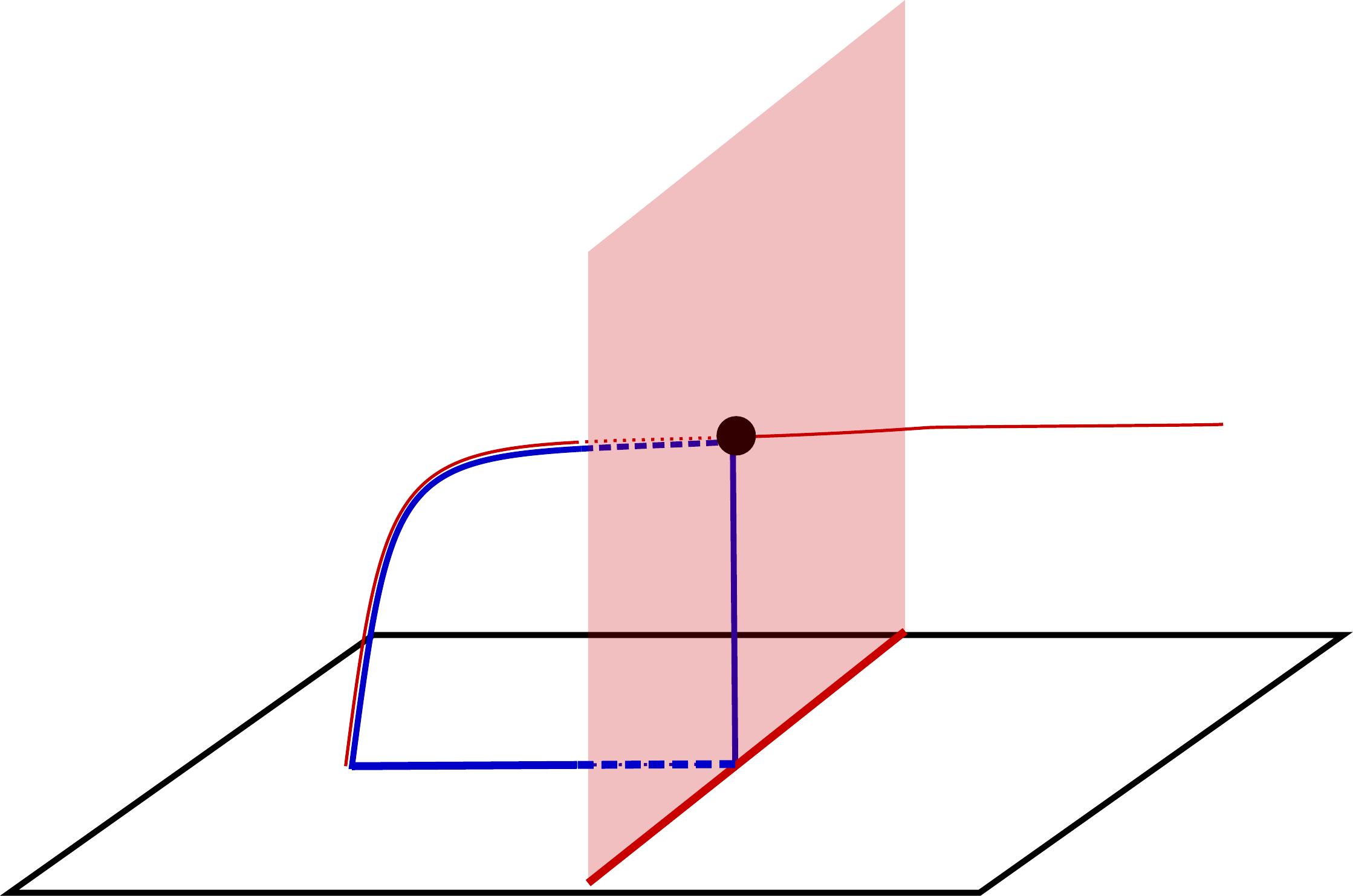}}

\put(70,12){$\bdry W$}
\put(50,68){$S_1$}
\put(82,45){$S_2$}
\put(55,45){$p$}

\put(100,5){\includegraphics[width=37.5mm]{rel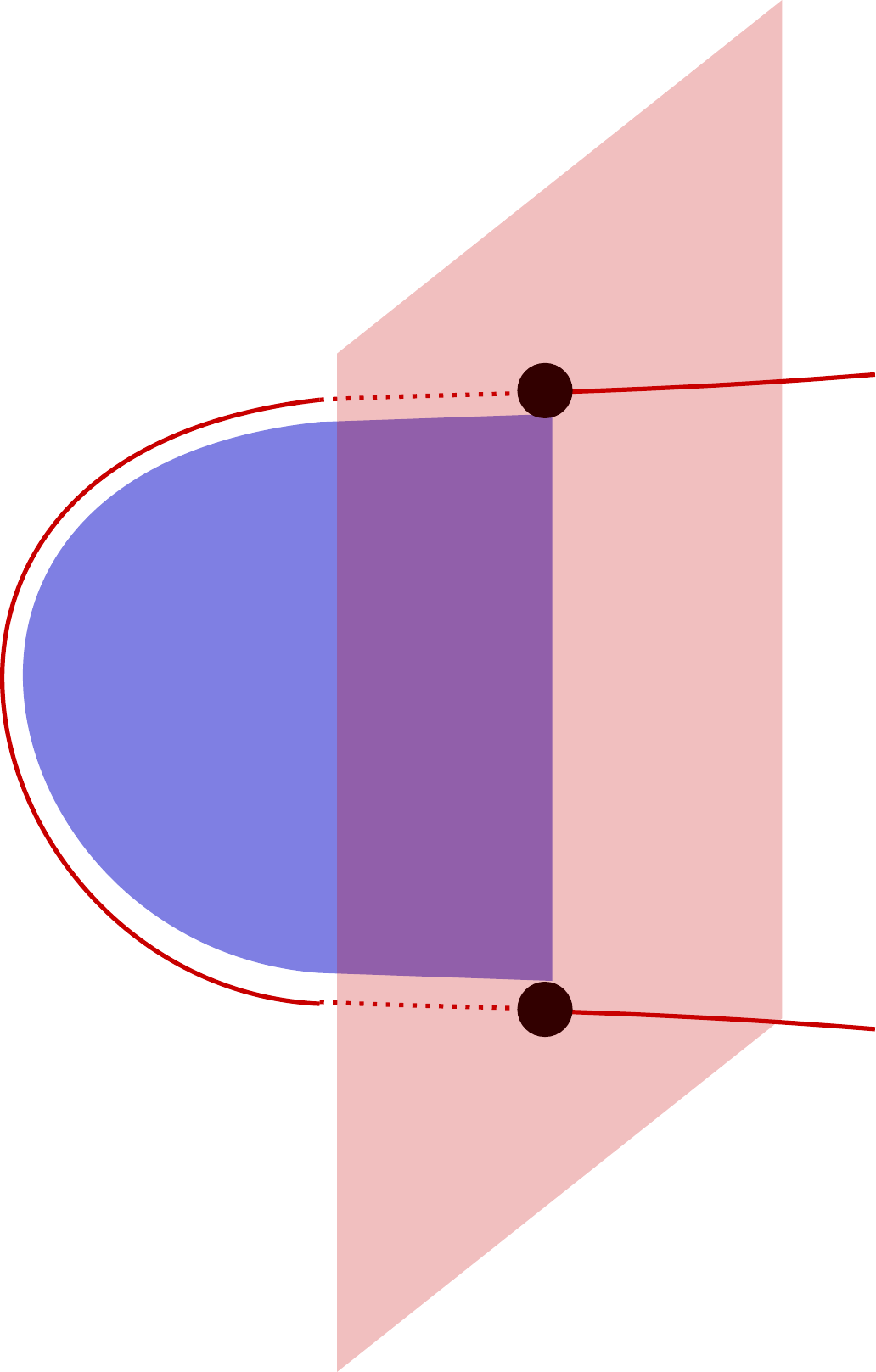}}
\put(155,45){$p$}
\put(135,30){$\Delta_p$}

\put(200,5){\includegraphics[width=37.5mm]{rel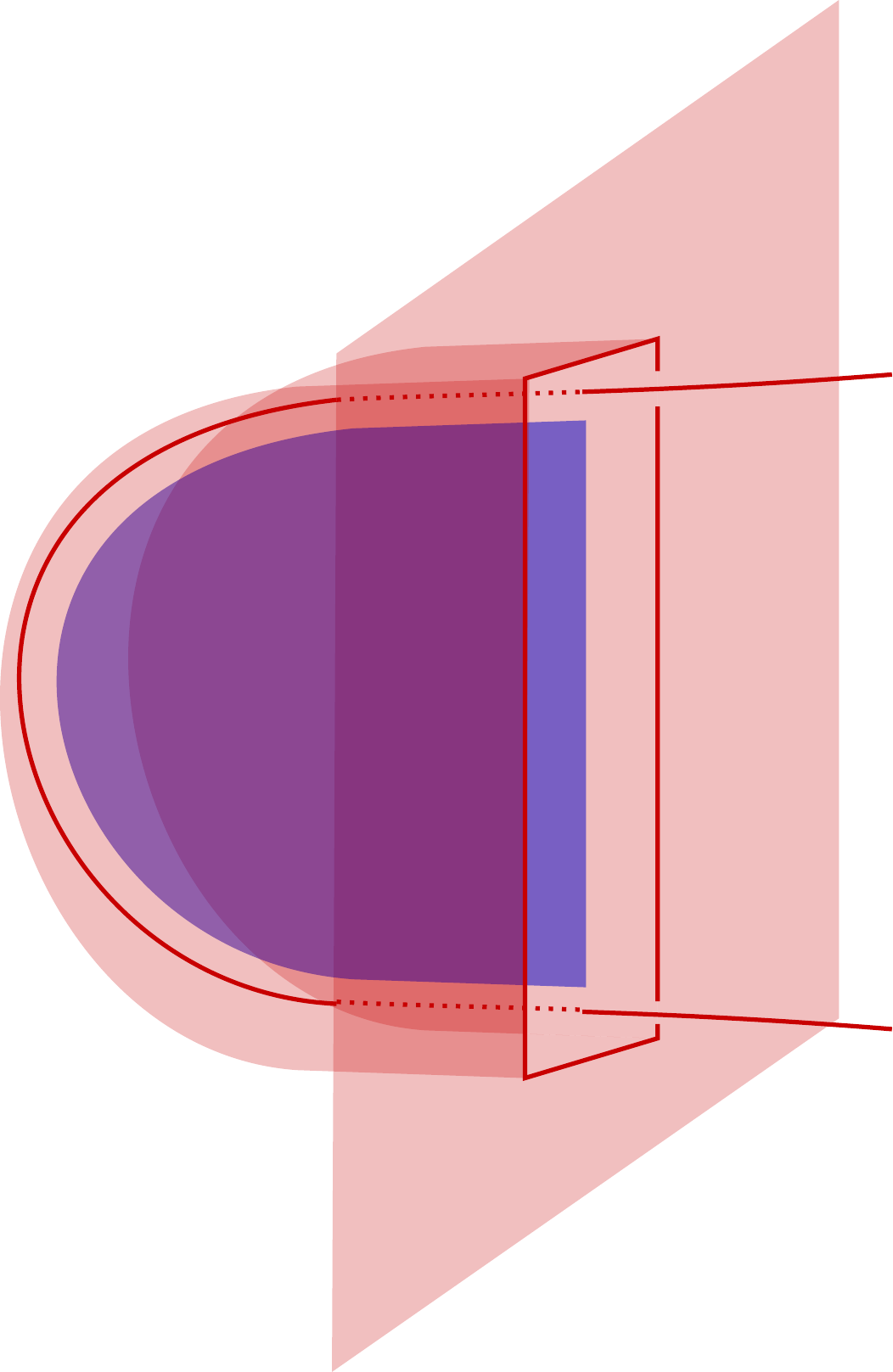}}

\end{picture}
\caption{A schematic for the relative Whitney trick. Possible singularities on the interior of the relative Whitney disk not depicted.}
\label{fig: rel Whitney trick}
\end{figure}

In comparison, the ordinary Whitney trick begins with two intersection points $p, q\in S_1\cap S_2$ with opposite sign that are paired by a Whitney disk. This immersed disk guides the (ordinary) Whitney trick, which is a regular homotopy of $S_1$ with the effect of removing both intersection points $p$ and $q$; see Figure~\ref{fig: Whitney Trick}. Any singularities present in the guiding Whitney disk will yield new singularities in the result of the Whitney move, and similarly for the relative Whitney trick.

\begin{figure}[h]
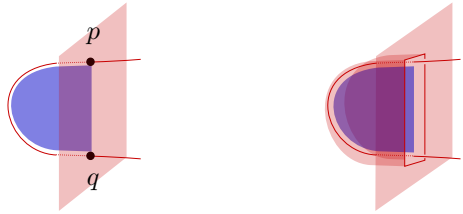

\begin{picture}(200,80)
\put(0,0){\includegraphics[height=27.75mm]{WhitDisk.pdf}}
\put(120,0){\includegraphics[height=27.75mm]{WhitMove.pdf}}
\put(30,65){$p$}
\put(30,10){$q$}
\end{picture}
\caption{A schematic for the ordinary Whitney trick. Possible singularities on the interior of the Whitney disk not depicted.}\label{fig: Whitney Trick}
\end{figure}

Our applications of the relative Whitney trick all concern the study of links in homology $3$-spheres. In this article, links $L=L_1\cup\dots\cup L_n$ will be ordered and oriented. Given a homology $3$-sphere $Y$, recall it bounds a contractible topological $4$-manifold~\cite[Theorem 1.4']{MR679066}, which we usually denote by $X$. The uniqueness of this 4-manifold, up to homeomorphisms fixing the boundary, follows from a standard argument using topological surgery theory and the $5$-dimensional $h$-cobordism theorem.

\subsection{Slicing links in homology spheres up to homotopy}

 We say a link $L$ is \emph{slice} if it bounds a collection of disjoint locally flat embedded disks~$D$ in $X$, and is moreover \emph{freely slice} if $\pi_1(X\sm D)$ is a free group generated by the meridians of $L$. Two links $L$ and $L'$ are \emph{freely homotopic} if there exists a continuous function $F\colon S^1\times\{1,\dots,n\}\times[0,1]\to Y$ with $F(S^1,i,0) = L_i$ and~$F(S^1,i,1) = L_i'$ for all $i=1, \dots, n$. 

The relative Whitney trick will be used in the proof of the following theorem, the first main result of the article.

\begin{theorem}\label{thm:main}
Every link in a homology sphere is freely homotopic to a freely slice link.
\end{theorem}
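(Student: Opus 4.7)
The plan is to realize the free-homotopy class of $L$ by a link $L'$ bounding disjoint embedded locally flat disks $D' \subset X$ with $\pi_1(X \setminus D')$ free on meridians. A free homotopy of links preserves only the conjugacy classes $[L_i]$ in $\pi_1(Y)$, so the problem reduces to the following: for any $n$-tuple of conjugacy classes in $\pi_1(Y)$, produce a freely slice link in $Y$ realizing them.

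First I would pick loops $\ell_i \subset Y$ representing the conjugacy classes $[L_i]$, placed in generic position. Since $X$ is contractible, each $\ell_i$ is null-homotopic in $X$ and hence bounds a properly immersed locally flat disk $D_i \subset X$. After a small perturbation, the collection $D = D_1 \cup \cdots \cup D_n$ is in general position, with only finitely many transverse double points in the interior of $X$ (both self-intersections and mutual intersections between different components).

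Next, I would eliminate these double points using the relative Whitney trick from Section~\ref{sect: rel Whit trick}. At each double point $p$, simple connectivity of $X$ guarantees a relative Whitney disk $\Delta_p$ whose boundary has one arc on $\partial X = Y$, and applying the trick removes $p$ at the cost of a free homotopy of one component of $\partial D$ along an arc in $Y$. Interior singularities of $\Delta_p$ generate further intersections among the ambient disks, but by choosing the $\Delta_p$ with care --- exploiting the three-dimensional flexibility available for the boundary arcs on $Y$, which distinguishes the relative trick from the ordinary one --- I would aim to reduce a suitable complexity measure monotonically, so that after finitely many rounds we are left with disjointly embedded disks $D'_i$ bounded by a link $L'$ freely homotopic to $L$. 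To upgrade slice to freely slice, I would verify that the resulting complement satisfies $\pi_1(X \setminus D') \cong F_n$, freely generated by the meridians of $L'$; this should follow from the fact that the iterative construction yields a ribbon-like Morse presentation of the disks $D'_i$, from which a handle-calculus argument shows the meridians are the only generators and satisfy no relations.

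The main obstacle is the termination of this iteration: the new singularities introduced by the interiors of the $\Delta_p$ must eventually die out. I expect this to be the technical heart of the argument, and to rely crucially on the extra geometric room available for the boundary arcs in the three-manifold $Y$, in contrast to the closed-$4$-manifold setting of the ordinary Whitney trick.
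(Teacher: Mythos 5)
Your proposal has a genuine gap, and it is located exactly at the point you flag as ``the main obstacle.'' The relative Whitney trick, as the paper constructs and uses it, cannot eliminate self-intersections of a disk: the procedure for removing a double point $p \in D_i \cap D_j$ involves finger moves that deliberately \emph{create} new self-intersections of the $D_k$ in order to clear the interior of the relative Whitney disk $\Delta_p$, and performing the trick along an immersed $\Delta_p$ imports all of its self-intersections into $D_i$. The termination argument in Proposition~\ref{prop:get immersed disks} (the induction on ``acceptable numbers'') monotonically reduces the number of intersections \emph{between distinct components} only, at the cost of an uncontrolled increase in self-intersections. There is no analogous complexity measure that decreases for self-intersections, and indeed Proposition~\ref{prop: disk sep in intro} is careful to conclude only that $L$ is freely homotopic to a link bounding \emph{disjoint immersed} disks, not embedded ones. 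A termination argument of the kind you are hoping for would, among other things, give an elementary proof of the disk embedding theorem in this setting; that is not available.

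The paper's route is structurally different precisely for this reason. After using the relative Whitney trick to obtain disjoint immersed disk collections (Propositions~\ref{prop:get immersed disks} and~\ref{prop:get immersed disks +}), the proof does \emph{not} attempt to embed those disks directly. Instead it arranges that $L$ is freely homotopic to a boundary link $J$ whose Seifert surface carries a ``good disky basis'' in the sense of Definition~\ref{def:disky}, and then invokes Theorem~\ref{thm:CKP} --- a generalization of Cha--Kim--Powell to homology spheres --- to conclude $J$ is freely slice. Theorem~\ref{thm:CKP} in turn rests on constructing a specific 4-manifold $W$ bounded by $M_J$ (via pushed-in Seifert surfaces and handle attachments) and applying the Freedman--Quinn embedding theorem (\cite[Theorem~6.1]{Freedman-Quinn:1990-1}) to a $\pi_1$-null immersion of transverse pairs with algebraically trivial intersections; the embedded slice disks are then cocores of $2$-handles. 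Your proposal omits this entire surgery-theoretic stage, and it is not optional: the geometric separation step only gets you to immersed disks, and passing from immersed to locally flat embedded disks in a topological $4$-manifold fundamentally requires disk-embedding machinery. Similarly, the claim that $\pi_1(X \smallsetminus D')$ is free ``should follow'' from a ``ribbon-like Morse presentation'' is not substantiated and would need to be replaced by the $\pi_1$-bookkeeping done during the handle attachments and surgeries in Appendix~\ref{appendix}.
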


This result should be compared to the work of Austin-Rolfsen \cite{Austin-Rolfsen:1999-1}, who proved that any knot in a homology sphere is freely homotopic to a knot with trivial Alexander polynomial. Combined with a result of Freedman-Quinn~\cite[Theorem 11.7B]{Freedman-Quinn:1990-1}, the Austin-Rolfsen result shows that any knot in a homology sphere can be reduced by a free homotopy to a freely slice knot. (For a historical discussion of Alexander polynomial 1 knots see \cite[\S 21.6.3]{DETBook}.) To extend this to links we will use the relative Whitney trick, together with the methods of topological surgery theory and {a generalization of} results of Cha-Kim-Powell \cite{Cha-Kim-Powell:2020-1} {which give a sufficient condition for a link in a homology sphere to be freely slice.} 
This condition arises from a surgery-theoretic link slicing approach as we now outline.

A link $L=L_1\cup \dots \cup L_n$ in a 3-manifold $Y$ is a \emph{boundary link} if there exists a collection of  pairwise disjoint surfaces $F=F_1\cup\dots\cup F_n$ in $Y$, where $F_i$ is a Seifert surface for $L_i$. Such $F$ is called a \emph{boundary link Seifert surface} for the boundary link $L$. A surgery-theoretic strategy to slice $L$ is to construct a $4$-manifold $W$ with boundary $M_L$, the $0$-surgery on $L$, such that when we glue $2$-handles to the boundary, reversing the $0$-surgery, we obtain a contractible $4$-manifold. The desired slice disks are then the cocores of the $2$-handles.

In an attempt to construct such a $W$, begin by pushing a boundary link Seifert surface $F=F_1\cup\dots\cup F_n$ for $L$ into the contractible $4$-manifold bounded by $Y$. Excise a tubular neighbourhood of the pushed in surface to obtain a $4$-manifold $X_F$ with free fundamental group generated by the meridians of $L$, and whose boundary decomposes as $\partial X_F=(S^1\times \bigcup_i (F_i\setminus \mathring{D^2}))\cup X_L$, where $X_L$ denotes the link exterior. Let $H=H_1\cup\dots\cup H_n$ be a collection of $3$-dimensional handlebodies where $H_i$ has the same genus as $F_i$, and let $\varphi\colon F\cong \partial H$ denote a collection of homeomorphisms $\varphi_i\colon F_i\cong \partial H_i$. Form a new manifold $V_F:=X_F\cup (S^1\times H)$ by using the homeomorphism
\[
\operatorname{id}\times\varphi\colon S^1\times \bigcup_i (F_i\setminus \mathring{D^2})\cong S^1\times\bigcup_i (\partial H_i\setminus \phi(\mathring{D}^2))
\]
to glue only along this part of the boundary. The resulting $4$-manifold $V_F$ has boundary the 0-surgery $M_L$, as desired, and it is moreover possible to choose $\varphi$ so that $\pi_1(V_F)$ is still free and generated by the meridians of $L$. We would now like to know that there exist framed embedded $2$-spheres in $V_F$ that can be removed by surgery to kill the second homology.

As in Cha-Kim-Powell, we will obtain these embedded $2$-spheres via a theorem of Freedman-Quinn~\cite[Theorem 6.1]{Freedman-Quinn:1990-1}. This theorem states that the presence of certain configurations of framed immersed spheres (specifically \emph{$\pi_1$-null immersions of a union of transverse pairs with algebraically trivial intersections}; see Appendix \ref{appendix}) imply the existence of the embedded spheres required for surgery. To build these configurations, Cha-Kim-Powell find properly immersed disks in the exterior of the pushed in boundary link Seifert surface, bounded by a basis for that surface, and cap them off with properly embedded discs in $H\subseteq V_F$ that are dual to the cores of the handlebodies $H_i$ in $V_F$. They derive conditions on $L\subseteq S^3$ sufficient to ensure the described immersed sphere collection satisfies the hypotheses of \cite[Theorem 6.1]{Freedman-Quinn:1990-1}. In Section~\ref{sec:homotopictoslice}, we produce a straightforward generalization of their conditions for links in a homology sphere and in Appendix \ref{appendix} we confirm that links in homology spheres satisfying the generalized conditions are freely slice.

Thus our real challenge in the proof of Theorem~\ref{thm:main} becomes finding a way to freely homotope an arbitrary link to one satisfying the generalized Cha-Kim-Powell conditions. For this we will need a mechanism for separating properly immersed disk collections in $4$-manifold, at the expense of changing the link on the boundary by a free homotopy.  In Section~\ref{sec:separating}, we use the relative Whitney trick to achieve this goal, proving the following (in fact, we prove a more general statement in Proposition~\ref{prop:get immersed disks}).

\begin{proposition}\label{prop: disk sep in intro}
If $L$ is a link in a homology sphere $Y$, and $X$ is a contractible 4-manifold bounded by $Y$, then $L$ is freely homotopic to a link whose components bound disjoint locally flat immersed disks in $X$. Moreover, if $X$ is smooth,
then these disks may be smoothly immersed.\end{proposition}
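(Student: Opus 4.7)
The plan is to first use the simple-connectivity of $X$ to bound each link component $L_i$ by a locally flat immersed disk $D_i\subset X$, and then apply the relative Whitney trick iteratively to remove every intersection between distinct $D_i$'s, one at a time, at the cumulative cost of a free homotopy of $L$ in $Y$.

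For the first step: since $X$ is contractible, $\pi_1(X)=1$, so each $L_i$ extends to a continuous map $D^2\to X$. By topological general position for locally flat maps in dimension four (respectively smooth transversality in the smooth case), I can replace this map by a proper locally flat (respectively smoothly) immersed disk $D_i$ with $\partial D_i=L_i$, and after small perturbations I may place the collection $\{D_1,\ldots,D_n\}$ in general position with only transverse double-point singularities.

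Set $N:=\sum_{i<j}\#(D_i\cap D_j)$, counting the inter-component double points. I would reduce $N$ to zero by induction. Given $p\in D_i\cap D_j$, build a relative Whitney disk $\Delta_p$ as described in Section~\ref{sect: rel Whit trick} by taking an embedded arc on $D_i$ from $p$ to a point of $L_i$, an arc in $Y$, and (where needed) an arc on $D_j$ returning to $p$, then filling in the resulting loop by a null-homotopy in $X$ (which exists because $\pi_1(X)=1$). Applying the relative Whitney trick guided by $\Delta_p$ then modifies $D_i$ by a regular homotopy that removes the intersection point $p$, while $L_i$ is replaced by a freely homotopic link via a homotopy along the $Y$-arc.

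The main obstacle is to ensure that the relative Whitney move does not create new inter-component intersections. Self-intersections of $\Delta_p$ and intersections of $\Delta_p$ with $D_i$ only produce new self-intersections of the modified disk $D_i'$, which are harmless for our disjointness goal; however, intersections of $\Delta_p$ with any other $D_k$ (for $k\ne i$) would produce new inter-component double points. I plan to resolve this by taking $\Delta_p$ in a small tubular neighborhood of the guiding arc on $D_i$, and then using tubing and boundary-twisting tricks, possibly combined with a recursive application of the relative Whitney trick to $\Delta_p$ itself, to convert all of $\Delta_p$'s unwanted intersections with $D_k$ (for $k\ne i$) into self-intersections of $\Delta_p$ or intersections with $D_i$. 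Once $\Delta_p$ is clean in this sense, one application of the relative Whitney trick reduces $N$ by one, and iteration yields the disjoint immersed disks bounded by a link freely homotopic to $L$. The smoothness statement follows since every step in the argument can be performed in the smooth category when $X$ is smooth.
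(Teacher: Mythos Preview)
Your overall strategy matches the paper's: get generically immersed disks $D_1,\dots,D_n$ from contractibility, then use relative Whitney moves to kill inter-component intersections one at a time. You have also correctly located the obstacle: if the relative Whitney disk $\Delta_p$ meets some $D_k$ with $k\neq i$, the move creates two new points of $D_i'\cap D_k$ for each such intersection, so $\Delta_p$ must first be cleaned of these.

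The gap is in your resolution of that obstacle. Taking $\Delta_p$ ``in a small tubular neighbourhood of the guiding arc on $D_i$'' is not possible: the boundary of $\Delta_p$ consists of an arc $\alpha_1\subset D_i$, an arc $\alpha_2\subset D_j$, and an arc $\alpha_3\subset Y$, and the interior is a null-homotopy in $X$ that you have no control over beyond general position. Tubing and boundary-twisting do not obviously convert $\Delta_p\cap D_k$ into self-intersections of $\Delta_p$ or into $\Delta_p\cap D_i$, and you have not said how they would. Your parenthetical ``possibly combined with a recursive application of the relative Whitney trick to $\Delta_p$ itself'' is the right idea, but as stated it is not a proof: you need to explain why the recursion terminates rather than producing an infinite regress of higher-order relative Whitney disks, each with its own unwanted intersections.

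The paper supplies two ingredients you are missing. First, intersections of $\Delta_p$ with $D_j$ (and more generally with any $D_k$ where $k$ lies in a growing set of ``acceptable'' indices) are removed by a \emph{finger move on $D_k$} along an arc in $\Delta_p$ running back to $\alpha_2\subset D_k$; this costs only new self-intersections of $D_k$. Second, for an unacceptable intersection $r\in\Delta_p\cap D_k$ with $k\notin\{i,j\}$, one builds a new relative Whitney disk $\Delta_r$ for $r$ (viewed as a double point of $\Delta_p\cup D_k$), whose acceptable set is $\{i,j,k\}$. Iterating builds a finite tree of relative Whitney disks whose acceptable sets grow by one index at each level; after at most $n-1$ levels every intersection is acceptable and can be handled by finger moves. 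Collapsing the tree from the leaves inward via relative Whitney moves then cleans $\Delta_p$ so that the final move on $D_i$ genuinely decreases $\sum_{i<j}\#(D_i\cap D_j)$ by one. Without this bookkeeping, your induction on $N$ does not go through.
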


We end this subsection by pointing out that it is not known whether Theorem~\ref{thm:main} can be extended to the smooth category or not. Concretely, we ask the following question.

\begin{question}\label{q:daemi}Let $L$ be a link in a homology sphere $Y$ and $X$ be a \emph{smooth} contractible 4-manifold bounded by $Y$. Is $L$ freely homotopic to a link $J$ in $Y$ so that the link $J$ bounds a collection of disjoint \emph{smooth} disks in $X$? Can this question be answered if $L$ is a knot?
\end{question}

In the case that $L$ is a knot, the question was answered affirmatively by the first named author, under the assumption that $X$ admits a handle structure with no $3$-handles~\cite[Theorem~1.5]{Davis2020b}. Compare this with~\cite[Remark~1.6]{Daemi:2020-1}, where Daemi shows the answer to Question~\ref{q:daemi} is negative if one requires $X$ to be only a homology ball. Indeed, he shows there is a knot in $Y\# -Y$, where $Y$ is the Poincar\'e homology sphere, such that the knot, even up to free homotopy, does not bound a smooth immersed disk in any smooth homology ball bounded by $Y\# -Y$.

We finally remark that if a link $L\subset Y$ bounds a collection of disjoint piecewise linear disks in a 4-manifold then that link may be changed by a homotopy in $Y$ to a link bounding a collection of disjoint smooth disks in that 4-manifold; see e.g.~\cite[Proof of Proposition 1.3]{Levine2016} for a technique to achieve this by ``tubing into the singularities''. Thus we compare Question~\ref{q:daemi} with results proving the non-existence of piecewise linear disks for certain knots in the boundaries of contractible $4$-manifolds~\cite{Zeeman:1964-1, Akbulut:1991-1, Levine2016, HLL2018, Zhou:2020-1}, and suggest our question is a natural refinement of the general problem of finding piecewise linear slice disks in contractible $4$-manifolds.

\subsection{Whitney tower concordance}

Our second application of the relative Whitney trick concerns \emph{homology concordance} of links. Links $L$ and $J$ in homology spheres $Y$ and $Y'$ are \emph{homology concordant} if there is a disjointly embedded union of locally flat  annuli each one bounded by a component of $L$ and a component of $J$ in a homology cobordism from $Y$ to $Y'$. It is conjectured by the first named author~\cite{Davis2020} that every link in a homology sphere is homology concordant to a link in $S^3$. This conjecture is particularly intriguing because the corresponding statement is known to be false in the smooth category~\cite{Levine2016, HLL2018, Zhou:2020-1,Daemi:2020-1}. Evidence for this conjecture was provided in~\cite{Davis2020b,Davis2020} and we provide a similar type of evidence in this article. Our evidence will come in the language of Whitney tower concordance.  A formal definition of a Whitney tower appears in Section~\ref{sect: towers}; see also \cite{CST2012}.  Informally, a \emph{Whitney tower} is a 2-complex given by starting with an immersed surface {in a 4-manifold }(a union of annuli {in a homology cobordism} in our case) and iteratively pairing up intersection points with Whitney disks, while accepting that each added Whitney disk will introduce more intersection points which must be paired with new Whitney disks. The \emph{order} of a Whitney tower records roughly how far into this tower one must go before seeing intersection points which are not paired with Whitney disks.  Two links $L$ and $J$ in homology spheres $Y$ and $Y'$ are \emph{order $k$ Whitney tower concordant} and we write $L\simeq_k J$ if they bound a collection of immersed annuli which extend to an order $k$ Whitney tower in a simply connected homology $S^3\times[0,1]$; cf.~\cite[Definition 3.2]{CST2012}. Hence if two links are homology concordant then they are order $k$ Whitney tower concordant for all $k$. We will use the relative Whitney trick to prove the following.
 
\begin{theorem}\label{thm:WhitneyTowerCobordism}
If $L$ is link in a homology sphere and $k$ is a nonnegative integer, then there is a link $J$ in $S^3$ such that~$L\simeq_k J$.
\end{theorem}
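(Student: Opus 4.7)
The strategy is to construct the required order $k$ Whitney tower inside $W:=X\sm\mathring{B}^4$, where $X$ is the contractible topological $4$-manifold bounded by $Y$ and $B^4\subset\mathring{X}$ is a small open $4$-ball. Then $W$ is simply connected with the homology of $S^3\times[0,1]$ and $\partial W=Y\sqcup(-S^3)$, so it is an admissible ambient space for a Whitney tower concordance. Apply Proposition~\ref{prop: disk sep in intro} to freely homotope $L$ to a link $L'\subset Y$ bounding pairwise disjoint immersed disks $\Delta_1,\dots,\Delta_n\subset X$. After a small perturbation, arrange $B^4$ to meet each $\Delta_i$ transversely in a single embedded disk disjoint from all self-intersections, and set $J_0:=\bigsqcup_i\partial(\Delta_i\cap B^4)\subset S^3$. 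The surfaces $\Delta_i\cap W$ are then pairwise disjoint immersed annuli in $W$ from $L'_i$ to $J_{0,i}$, and splicing in a realization of the free homotopy in a collar $Y\times[0,1]\subset W$ yields immersed annuli $A=A_1\cup\dots\cup A_n$ in $W$ from $L$ to $J_0$. Note $J_0$ bounds pairwise disjoint embedded disks in $B^4$ (namely $\Delta_i\cap B^4$), so all its pairwise linking numbers in $S^3$ vanish.

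Next, I would use the relative Whitney trick iteratively to adjust the intersection data so that it admits a Whitney tower structure. The standard formula for the algebraic intersection of annuli in a homology cobordism gives $A_i\cdot A_j=\lk_Y(L_i,L_j)-\lk_{S^3}(J_{0,i},J_{0,j})=\lk_Y(L_i,L_j)$, which is generally nonzero. For each pair $i\neq j$, I would apply the relative Whitney trick $|\lk_Y(L_i,L_j)|$ times, each application removing a single intersection of $A_i\cap A_j$ of the appropriate sign and sliding the $S^3$-boundary of $A_i$ across $J_{0,j}$ to adjust the corresponding linking number of the evolving link by $\pm 1$; the simply-connectedness of $W$ guarantees the existence of the required relative Whitney disks, and taking their boundary arcs on the $S^3$-side leaves $L$ unaltered. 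After all such moves, the resulting link $J\subset S^3$ satisfies $\lk_{S^3}(J_i,J_j)=\lk_Y(L_i,L_j)$ for all $i\neq j$, and the modified annuli between $L$ and $J$ have vanishing pairwise algebraic intersections in $W$. A parallel boundary-twisting argument arranges that the self-intersection (framing) numbers vanish as well.

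With vanishing algebraic intersection data on immersed annuli in the simply connected $W$, I would then build the order $k$ Whitney tower by the standard iterative procedure: at each level, pair oppositely signed intersection points with immersed Whitney disks, invoking simply-connectedness to produce a disk for each required pairing. The main obstacle will be verifying that at each stage $\ell\leq k$ the algebraic intersection counts of the newly introduced Whitney disks with the lower-level surfaces continue to vanish in a manner compatible with the tree-combinatorial formalism of~\cite[Definition 3.2]{CST2012}. The essential point is that the simply-connected nature of $W$ reduces each such higher-order obstruction to an ordinary integer intersection count, which can be killed by standard geometric adjustments (boundary twists, tubing into disks, finger moves creating cancelling pairs), allowing the iteration to be pushed to any prescribed finite order $k$ and producing the required $J\subset S^3$ with $L\simeq_k J$.
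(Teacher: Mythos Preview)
Your proposal contains a genuine gap in step 4. Once you have fixed $J$ at the end of step 3 (so that $\lk_{S^3}(J_i,J_j)=\lk_Y(L_i,L_j)$), the moves you list---boundary twists, tubing, finger moves---are all rel-boundary moves on the annuli and Whitney disks. They therefore cannot alter the higher-order intersection invariants $\tau_\ell$ of \cite{CST2012}, which are precisely the obstructions to extending an order $\ell$ Whitney tower to order $\ell+1$. These obstructions are not mere integers that can be ``killed by standard geometric adjustments''; they are link-concordance invariants detecting, for instance, Milnor's $\overline{\mu}$-invariants. Concretely, take $Y=S^3$ and $L$ the Borromean rings. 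All pairwise linking numbers vanish, so your step 3 is vacuous and $J=J_0$; but $J_0$ bounds disjoint embedded disks in $B^4$ and hence has $\mu_{123}(J_0)=0$, while $\mu_{123}(L)=\pm 1$. The resulting annuli cannot support an order $2$ Whitney tower, and no rel-boundary modification will change that.

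The paper's proof avoids this by \emph{not} fixing $J$ in advance. It first builds an order $k$ \emph{relative} Whitney tower on annuli from $L$ to the unlink (Lemma~\ref{lem: rel Whit tow exist}); relative Whitney disks always exist because $\pi_1(S^3)\to\pi_1(W)$ is onto, and there is no algebraic obstruction to a relative Whitney disk for a single intersection point. It then converts this relative tower into a genuine order $k$ Whitney tower (Lemma~\ref{lem: rel Whit tow to Whit tow}) by a local move that, at each order $\ell$ relative Whitney disk, pushes the $S^3$-end of a base annulus to create a new cancelling intersection, turning the relative disk into an honest Whitney disk. Crucially this move changes $J$ at every level of the tower, and it is precisely this freedom that absorbs the higher-order obstructions. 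Your use of Proposition~\ref{prop: disk sep in intro} and the linking-number adjustment in steps 1--3 are unnecessary detours; the real content is a mechanism for modifying $J$ at each stage of the tower construction, which your step 4 lacks.
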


We point out an interesting consequence.  Consider now an $n$-component link $L$ in a homology sphere.  By Theorem~\ref{thm:WhitneyTowerCobordism} there is a link $J$ in $S^3$ so that $L$ and $J$ cobound an order $n$ Whitney tower. According to \cite[Theorem 4]{ST14}, if $n$-component links cobound an order $n$ Whitney tower, {then} this Whitney tower can be used to guide a sequence of Whitney tricks to produce a disjoint union of immersed annuli. As observed in \cite[Remark 3]{ST14}, these annuli can be made smooth. We arrive at the following corollary.

\begin{corollary}~\label{cor: disjoint annuli to S3}
If $L$ is link in a homology sphere $Y$, then there is a link $J$ in $S^3$ and a simply connected homology cobordism from $Y$ to $S^3$ such that the components of $L$ and $J$ cobound disjoint immersed annuli in the cobordism. Moreover, if the cobordism is smooth, then these annuli may be smoothly immersed.\end{corollary}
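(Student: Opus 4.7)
The plan is to deduce the corollary as a direct consequence of Theorem~\ref{thm:WhitneyTowerCobordism} and a theorem of Schneiderman--Teichner, essentially as sketched in the paragraph preceding the statement. Let $L$ be an $n$-component link in a homology sphere $Y$. First I would apply Theorem~\ref{thm:WhitneyTowerCobordism} with the specific choice $k=n$ to obtain a link $J\subset S^3$, a simply connected homology cobordism $W$ from $Y$ to $S^3$, and a collection of $n$ immersed annuli in $W$ cobounding $L$ and $J$ that extend to an order $n$ Whitney tower.

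Next I would invoke \cite[Theorem 4]{ST14}: whenever two $n$-component links cobound an order $n$ Whitney tower, the Whitney disks of the tower can be used to guide a sequence of ordinary Whitney tricks that remove all remaining unpaired intersections between the underlying surface sheets. Applied in our setting, this produces a disjoint union of immersed annuli in $W$ cobounding $L$ and $J$, which is the content of the topological statement of the corollary. The precise matching $k=n$ between the tower's order and the link's component count is exactly what \cite[Theorem 4]{ST14} requires for this termination; the number of components controls the complexity of the Whitney trees that must be paired off.

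For the smooth refinement, I would appeal to \cite[Remark 3]{ST14}: each Whitney trick is a local regular homotopy supported near its guiding Whitney disk, so if the cobordism and the initial Whitney tower are smooth, then the resulting disjoint immersed annuli are smooth as well. Since the Whitney tower from Theorem~\ref{thm:WhitneyTowerCobordism} can be taken smooth whenever the ambient cobordism is, the second sentence of the corollary follows at once.

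The deduction itself poses no real obstacle: all of the analytic work lives in Theorem~\ref{thm:WhitneyTowerCobordism} (and upstream, in the relative Whitney trick that drives it) and in the cited Schneiderman--Teichner result. The only thing to verify is that applying Theorem~\ref{thm:WhitneyTowerCobordism} at $k=n$ meets the hypotheses of \cite[Theorem 4]{ST14}, which is immediate from the definition of $L\simeq_n J$ as order $n$ Whitney tower concordance inside a simply connected homology $S^3\times[0,1]$.
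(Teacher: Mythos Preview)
Your proposal is correct and follows essentially the same argument as the paper: apply Theorem~\ref{thm:WhitneyTowerCobordism} with $k=n$, then invoke \cite[Theorem~4]{ST14} to convert the order~$n$ Whitney tower into disjoint immersed annuli, and cite \cite[Remark~3]{ST14} for the smooth case.
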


\subsection{Gordian distance and link homotopy}

Freedman-Teichner~\cite{FT2} say $L\subset Y$ is \emph{4D-homotopically trivial} if it bounds disjoint immersed disks in a contractible 4-manifold. Suppose $L$ intersects a $3$-ball $B\subset Y$ so that $(B,B\cap L)$ is orientation preserving homeomorphic to one of the tangles in Figure~\ref{fig: crossing}. A \emph{crossing change} is the local tangle replacement operation of replacing a positive crossing with a negative crossing, or vice-versa.  The reader may notice that this construction depends on the choice of identification of $B$ to the 3-ball, but this subtlety will not be relevant in our analysis.

\begin{figure}[h]
\begin{picture}(160,60)
\put(0,0){\includegraphics[height=18.5mm]{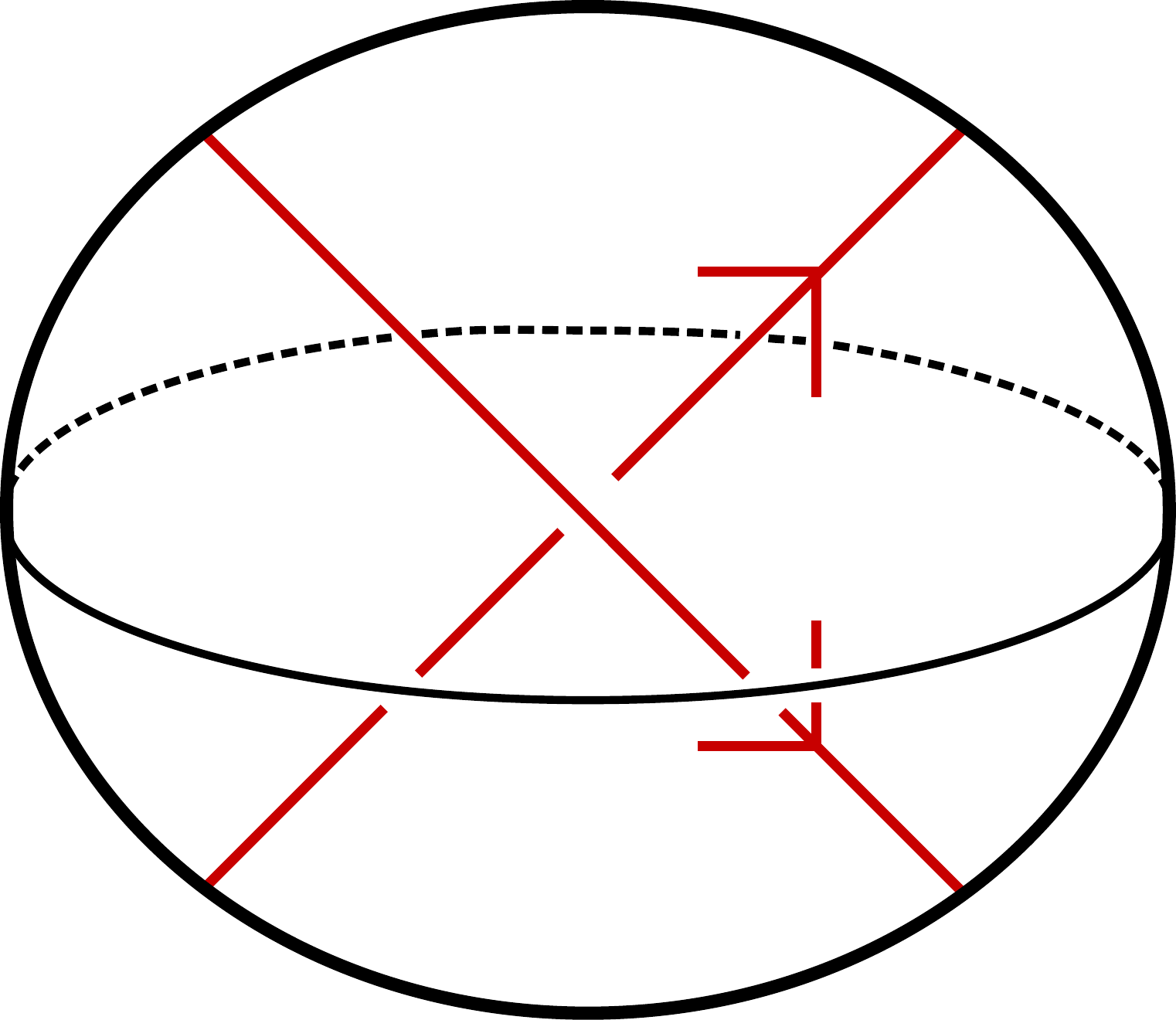}}
\put(100,0){\includegraphics[height=18.5mm]{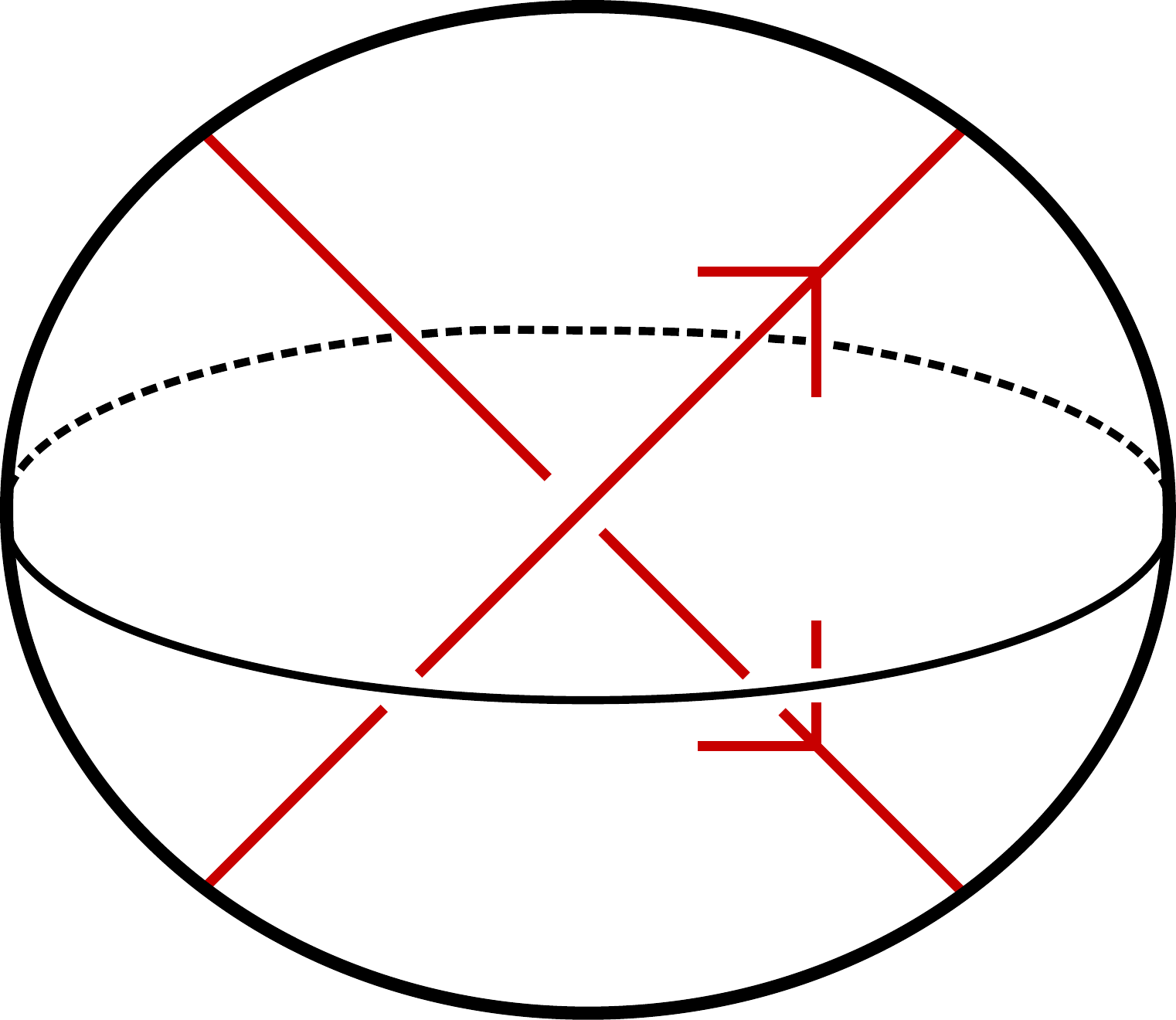}}
\end{picture}
\caption{Left: A positive crossing.  Right: A negative crossing.}\label{fig: crossing}
\end{figure} 

We define the \emph{homotopy trivializing number} $n_h(L)$ to be the minimum number of crossing changes required to transform $L$ into a 4D-homotopically trivial link. If $L$ is a link in $S^3$, then the homotopy trivializing number of $L$ is the minimum Gordian distance from $L$ to a homotopically trivial link (in the sense of Milnor~\cite{M1}).

In Subsection~\ref{subsec:nh=nd}, we use the relative Whitney trick to prove that this number can be computed by counting the number of intersections between distinct generically immersed disks bounded by the link in a contractible 4-manifold. More concretely, we prove the following.

\begin{proposition}\label{prop:nd=nh} 
If $L$ is a link in a homology sphere $Y$, then 
\[
n_h(L)= \min\left\{\textstyle{\sum_{i<j}\#(D_i\cap D_j)}\right\}
\]
where the minimum is taken over all collections of immersed disks $D_1\cup\dots \cup D_n$ in the contractible $4$-manifold bounded by $Y$, with boundary the link, and meeting one-another transversely.
\end{proposition}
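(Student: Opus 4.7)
The plan is to establish both inequalities of the stated equality separately.

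For the upper bound $n_h(L) \leq \min\left\{\sum_{i<j}\#(D_i\cap D_j)\right\}$, I would start with a transverse collection of immersed disks $D_1, \dots, D_n$ in $X$ bounded by $L$ with $m = \sum_{i<j}\#(D_i\cap D_j)$ and iteratively apply the relative Whitney trick, one mutual intersection at a time. For each $p \in D_i \cap D_j$ with $i \neq j$, the plan is to choose an arc on $D_i$ from $p$ to a point $a \in L_i$, an arc on $D_j$ from $p$ to a point $b \in L_j$, and an embedded arc $\beta \subset Y$ joining $a$ to $b$ made disjoint from the other components of $L$ by general position. Since $X$ is simply connected, these three arcs close up to a loop bounding an immersed relative Whitney disk $\Delta_p$. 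The relative Whitney move across $\Delta_p$ then removes $p$, while the induced boundary modification of $L_i$ is, locally, a single crossing change between $L_i$ and $L_j$. After processing all $m$ mutual intersections this way, the resulting link $L''$ bounds pairwise disjoint immersed disks in $X$, so $L''$ is $4$D-homotopically trivial and $n_h(L) \leq m$.

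For the reverse inequality $\min\left\{\sum_{i<j}\#(D_i\cap D_j)\right\} \leq n_h(L)$, suppose $L$ is transformed into a $4$D-homotopically trivial link $L'$ by $k = n_h(L)$ crossing changes, with $L'$ bounding disjoint immersed disks $D_1', \dots, D_n'$ in a contractible $4$-manifold $X'$ with $\partial X' = Y$; by the uniqueness of such a $4$-manifold recalled in the introduction, one may identify $X'$ with $X$. Each crossing change is supported in a $3$-ball $B \subset Y$, and its trace in $B \times [0,1]$ consists of two arcs meeting transversely at exactly one interior point. Assembling all the traces in $Y \times [0,1]$ produces a family of immersed annuli $A_i$ cobounded by $L_i$ and $L_i'$ with $\sum_{i<j}\#(A_i \cap A_j) \leq k$ and no further singularities between distinct annuli. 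Gluing $Y \times [0,1]$ to $X'$ along $Y \times \{1\}$ and setting $D_i := D_i' \cup A_i$ then yields immersed disks in a contractible $4$-manifold bounded by $Y$ whose boundary is $L$ and with $\sum_{i<j}\#(D_i\cap D_j) \leq k$, as needed.

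The main obstacle will be verifying that the boundary effect of a single relative Whitney move is exactly one crossing change between the two relevant components; this identification should follow from a careful local model of the move as described in Section~\ref{sect: rel Whit trick}, but the bookkeeping at the endpoints of $\beta$ is delicate. A secondary concern is that the relative Whitney disk $\Delta_p$ may have interior intersections with the other disks $D_k$ (for $k \notin \{i,j\}$), which would create fresh mutual intersections when the move is performed. I plan to handle this either by carefully choosing $\Delta_p$ to be disjoint from the other $D_k$ using general position together with a simple connectivity argument in the relevant complement, or by iterating the procedure while arguing that the total number of crossing changes required remains bounded by~$m$.
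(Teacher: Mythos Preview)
Your overall two-inequality structure matches the paper's proof, and your argument for $n_d(L)\le n_h(L)$ (tracing the crossing changes as immersed annuli in $Y\times[0,1]$ and capping off with the disjoint disks for $L'$) is essentially identical to what the paper does.

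The gap is in your argument for $n_h(L)\le n_d(L)$, and it is precisely the issue you flag as a ``secondary concern''. When you perform the relative Whitney trick along $\Delta_p$ to remove $p\in D_i\cap D_j$, any interior intersection of $\Delta_p$ with some $D_k$, $k\notin\{i,j\}$, produces two new points in $D_i\cap D_k$; thus the total $\sum_{i<j}\#(D_i\cap D_j)$ can increase and your iteration does not terminate with only $m$ crossing changes. Neither of your proposed fixes works as stated. General position of a $2$-disk and a $2$-disk in a $4$-manifold yields isolated transverse points, not disjointness. And you cannot choose $\Delta_p$ in the complement of $\bigcup_{k\neq i,j}D_k$ by a ``simple connectivity'' argument: that complement is not simply connected (meridians of the $D_k$ survive), so the loop $\alpha_1*\alpha_2*\alpha_3$ need not bound there.

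The paper fills this gap by invoking Proposition~\ref{prop:get immersed disks}, whose proof is the substantive content here. The idea is not to avoid intersections of $\Delta_p$ with $D_k$, but to remove them by applying further relative Whitney tricks \emph{to $\Delta_p$ itself}. One builds a finite rooted tree of higher-order relative Whitney disks: an unacceptable point $r\in\Delta_p\cap D_k$ gets its own relative Whitney disk $\Delta_r$, and so on, with the set of ``acceptable'' indices growing by one at each level, so the tree has depth at most $n-1$. Working back from the leaves, finger moves (which only add self-intersections or acceptable intersections) and relative Whitney tricks on the higher-order disks clean up $\Delta_p$ so that it meets only $D_i$ and $D_j$. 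Crucially, a relative Whitney trick on a higher-order disk changes only the relative Whitney \emph{arc} of the disk below it---an arc in $Y$ disjoint from $L$---so no extra crossing changes of $L$ are incurred. Only the final move along the cleaned-up $\Delta_p$ touches $L$, contributing the single crossing change between $L_i$ and $L_j$. This is the mechanism that keeps the crossing-change count equal to $m$; without it, your proposal does not close.
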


In Subsection~\ref{subsec:boundnh}, we combine Proposition~\ref{prop:nd=nh} with results of Habegger-Lin~\cite{HL1} to obtain upper and lower bounds for $n_h(L)$. Moreover, for links with 3 or fewer components we completely determine $n_h(L)$. We remark that the upper bound we establish depends only on the linking numbers and the number of components, and in particular, it is independent of any of the higher order link homotopy invariants of Milnor \cite{M1}.

\begin{theorem}\label{thm: compute nh}
Let $L$ be a link in a homology sphere and $\Lambda(L):= \sum_{i<j}|\lk(L_i,L_j)|$.  If $L$ is a 2-component link, then $n_h(L) = \Lambda(L).$ If $L$ is a 3-component link, then 
$$n_h(L) = \begin{cases}
\Lambda(L)&\text{if }\Lambda(L)\neq 0\\
2&\text{if }\Lambda(L)= 0\text{ and }\mu_{123}(L)\neq 0\\
0&\text{otherwise.}\\
\end{cases}
$$
In general, there is some $C_n\in \N$ so that for every $n$-component link $L$,
$$
\Lambda(L)\le n_h(L)\le \Lambda(L)+C_n.
$$
\end{theorem}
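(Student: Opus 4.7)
The inequality $\Lambda(L) \leq n_h(L)$ is a direct consequence of Proposition~\ref{prop:nd=nh}. Given any transverse collection of immersed disks $D_1 \cup \cdots \cup D_n$ in the contractible 4-manifold $X$ with $\partial D_i = L_i$, the chains $D_i$ realize the linking numbers as algebraic intersections: capping $D_j$ with a Seifert surface for $L_j$ inside $Y$ gives a closed surface that is null-homologous in $X$, so $D_i \cdot D_j = \lk(L_i, L_j)$. Hence $\#(D_i \cap D_j) \geq |\lk(L_i, L_j)|$, and summing over pairs gives $\sum_{i<j} \#(D_i \cap D_j) \geq \Lambda(L)$. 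Minimizing over all such disk collections and invoking Proposition~\ref{prop:nd=nh} yields $n_h(L) \geq \Lambda(L)$.

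\textbf{Upper bound via two stages.} The plan is to first reduce linking numbers and then control the residual 4D-homotopical obstruction. For each pair $i<j$, realize $|\lk(L_i, L_j)|$ crossing changes between strands of $L_i$ and $L_j$ to produce a link $L'$ freely homotopic to $L$ with $\Lambda(L') = 0$, using exactly $\Lambda(L)$ crossing changes. It remains to establish a uniform bound $n_h(L') \leq C_n$ depending only on the number of components. I expect this to be the main obstacle: the idea is to combine the Habegger-Lin~\cite{HL1} classification of link homotopy as a finitely generated nilpotent group with the relative Whitney trick and Proposition~\ref{prop:nd=nh}, producing a bounded $n$-dependent list of 4-dimensional moves that trivialize any residual obstruction for $\Lambda = 0$ links. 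The precise value of $C_n$ should emerge from a count of generating classes modulo 4D-homotopic triviality.

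\textbf{Low-component cases.} For $n=2$, after reducing to $\Lambda(L') = 0$, Milnor's link-homotopy classification says $L'$ is link-homotopic to the unlink via self-crossing changes only, which do not contribute to $\#(D_1 \cap D_2)$; hence $L'$ bounds disjoint immersed disks in $X$, so $n_h(L') = 0$ and thus $n_h(L) = \Lambda(L)$. For $n = 3$ with $\Lambda(L) = 0$ and $\mu_{123}(L) = 0$, Milnor's classification again gives link-homotopy to the unlink, so $n_h(L) = 0$. For $n = 3$ with $\Lambda(L) = 0$ and $\mu_{123}(L) \neq 0$, I would establish the upper bound $n_h(L) \leq 2$ by exhibiting three immersed disks in $X$ bounded by $L$ with total geometric intersection $2$, built by a Borromean-rings-style model in which two of the disks meet in a canceling pair of points and the third is disjoint from both. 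The matching lower bound $n_h(L) \geq 2$ comes from the dichotomy: a single crossing change is either inter-component, which forces $\Lambda \geq 1$ in the new link and precludes 4D-homotopic triviality, or self-component, which preserves the link-homotopy invariant $\mu_{123}$; but $\mu_{123}$ must vanish for any 4D-homotopically trivial link with $\Lambda = 0$ (as one sees by computing $\mu_{123}$ from a collection of pairwise disjoint immersed disks in $X$, where the complement of the disks is $\pi_1$-equivalent to the complement of a trivial link). Hence one crossing change cannot yield a 4D-homotopically trivial link.
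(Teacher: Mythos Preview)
Your proposal has a genuine gap in the $3$-component case with $\Lambda(L)\neq 0$. Your two-stage strategy---spend $\Lambda(L)$ crossing changes to kill all linking numbers, then bound the residual by $C_n$---only yields $n_h(L)\le \Lambda(L)+C_3$, not the claimed equality $n_h(L)=\Lambda(L)$. Indeed, after you reduce to a link $L'$ with $\Lambda(L')=0$, there is no reason for $\mu_{123}(L')$ to vanish, and by your own analysis $n_h(L')$ could then be $2$. You never address the case $\Lambda(L)\neq 0$ separately, and the two-stage approach cannot close this gap.

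The paper's argument avoids this by \emph{not} first reducing to $\Lambda=0$. Instead it passes to string links and the Habegger--Lin splitting $T=s(\psi(T))*\phi(\gamma)$ with $\gamma\in RF(2)$, writes $\gamma=x_1^a x_2^b[x_1,x_2]^c$ where $a,b$ are linking numbers, and exploits the relations in $RF(2)$ to rewrite this as $x_1^{a+c}x_2 x_1^{-c}x_2^{b-1}$. When some linking number is nonzero (arranged so that $b>0$), deleting $|a|$ instances of $x_1$ and $1+|b-1|=b$ instances of $x_2$ trivializes the word---the commutator exponent $c$ is absorbed at no extra cost. This is precisely the mechanism that lets the nonzero linking number ``pay for'' the triple linking, and it is the idea missing from your outline. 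A similar word-level analysis (via Hall's basis theorem and a bound on the reduced trivializing number of basic commutators) is what the paper uses for the general $C_n$ bound, whereas your discussion there remains only a plan. Finally, for the lower bound $n_h(L)\ge 2$ when $\Lambda(L)=0$ and $\mu_{123}(L)\neq 0$, the paper uses the cleaner parity observation $n_h(L)\equiv\Lambda(L)\pmod 2$ rather than your case split on crossing types.
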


\subsection{Comparison with the ordinary Whitney trick}
In high-dimensions, ordinary Whitney disks are usually assumed to be (and can always be arranged to be) embedded with interiors disjoint from the submanifolds containing their boundary arcs (with the framing condition guaranteed by the opposite signs of the paired intersections).  In dimension four, ordinary Whitney disks are generally assumed to contain self-intersections and intersections with other surfaces (as well as framing obstructions) which are then studied or controlled.

In dimension four, it is not always possible to find a Whitney disk.  Their existence is obstructed by the self-intersection invariant in a quotient of the fundamental group ring of the ambient 4-manifold. In comparison, under appropriate conditions, it is always possible to find relative Whitney disks.  For instance if the map on fundamental groups induced by the inclusion of the boundary is surjective then every point of intersection will admit a relative Whitney disk.  Notice that our setting of homology spheres bounding contractible 4-manifolds certainly satisfies this condition.

\subsection*{Organization of the paper}The reader will have noticed that Theorems~\ref{thm:main}, \ref{thm:WhitneyTowerCobordism}, and \ref{thm: compute nh} seem disparate.  They are related by their reliance on the relative Whitney trick as a means to separate immersed disks.  In Section~\ref{sect: rel Whit trick}, we give a precise description of the relative Whitney trick, and in Section~\ref{sec:separating}, we use it to separate immersed disks, proving Proposition~\ref{prop: disk sep in intro}.  These two sections are prerequisite to the remaining sections of the paper, which are then more or less independent of each other.  In Section~\ref{sec:homotopictoslice}, we state a sufficient condition, Theorem~\ref{thm:CKP}, for freely slicing a boundary link in a homology sphere generalizing \cite[Theorem A]{Cha-Kim-Powell:2020-1}, and use it to prove Theorem~\ref{thm:main}. The proof of Theorem~\ref{thm:CKP} uses the same ideas as appear in \cite{Cha-Kim-Powell:2020-1} and so is delayed until Appendix~\ref{appendix}.  In Section~\ref{sect: towers}, we apply the relative Whitney trick to the construction of Whitney towers and prove Theorem~\ref{thm:WhitneyTowerCobordism}.  In Section~\ref{sect: Homotopy trivializing}, we relate $n_h(L)$ to the minimum number of intersection points amongst immersed disks bounded by a link $L$ and prove Theorem~\ref{thm: compute nh}.

\subsection*{Notation and conventions}
In this article, links are ordered and oriented. All manifolds are assumed oriented and compact. We will denote by $-Y$ the manifold $Y$ with reversed orientation.

\subsection*{Acknowledgements} We are are indebted to Matthias Nagel for his contributions. We also thank Aru Ray for helpful conversations. We thank the anonymous referee for helpful suggestions. PO is supported by the SNSF Grant~181199.

\section{The relative Whitney trick}\label{sect: rel Whit trick}

In this section, we review some of the terminology and conventions we will use throughout the paper, working in the category of topological manifolds. We then give a detailed description of the relative Whitney trick.

\subsection{Conventions for topological manifolds}

We recall some definitions used in~\cite[Section 3]{powell20204dimensional}. A continuous map between topological manifolds is a \emph{generic immersion} if it is locally a smooth immersion (in particular a generic immersion is locally flat away from double points). For $(\Sigma,\partial \Sigma)$ a compact surface with boundary and $(W,\partial W)$ a compact $4$-manifold with boundary, any continuous map $(\Sigma,\partial \Sigma)\to (W,\partial W)$ is homotopic to a generic immersion; this follows from \cite[Theorem 8.2]{Freedman-Quinn:1990-1}, see~\cite[Proposition 3.1]{powell20204dimensional} for an argument. We will henceforth assume generic immersions of surfaces in $4$-manifolds have at worst double points. We will call the image of a generically immersed surface simply an \emph{immersed surface} from now on and call it an \emph{embedded surface} if it has no double points. If $p\in S$ is a double point of an immersed surface $S\subseteq W$ then by definition there is a neighbourhood $U$ of $p$ in $W$ such that $S\cap U$ is homeomorphic to $(\RR^2\times\{0\})\cup(\{0\}\times \RR^2)\subseteq\RR^4$. The submanifolds $S_1\subseteq S$ and $S_2\subseteq S$ identified with  $\RR^2\times\{0\}$ and $\{0\}\times \RR^2$ (respectively) under this homeomorphism are called \emph{local sheets} of $S$ near $p$. If $S\subseteq W$ is an immersed surface, it follows that $S$ has a neighbourhood homeomorphic to a self plumbing of a vector bundle over $S$; see~\cite[Remark 3.2]{powell20204dimensional}. Such a neighbourhood is called a \emph{tubular neighbourhood} and the bundle is called the \emph{normal vector bundle}.  All the surfaces in this section are compact and each component has boundary.  As a consequence, if the normal vector bundle is orientable then it is also trivializable. If a choice of trivialization has been made, we will say $S$ is \emph{framed}.

We say two (locally flat) immersed submanifolds are \emph{transverse} if they are locally transverse in the sense of smooth manifolds. Note that if connected embedded surfaces $N_1$ and $N_2$ meet transversely, this is equivalent to saying $N_1\cup N_2$ is immersed, so in particular it makes sense to talk about local sheets near transverse intersections of embedded surfaces. We say a proper submanifold $(N,\partial N)\subseteq (W,\partial W)$ is \emph{transverse} to the boundary if it is locally transverse to the boundary in the sense of smooth manifolds. Transversality is generic in the topological category as follows. Given locally flat proper submanifolds $N_1$, $N_2$ of a $4$-manifold $W$ that are transverse to the boundary $\partial W$, there is an isotopy of $W$, supported in any given neighbourhood of $N_1\cap N_2$ taking $N_1$ to a submanifold $N'_1$ that is transverse to $N_2$; see \cite{Quinn-EndsII,Quinn:transversality} and \cite[Section 9.5]{Freedman-Quinn:1990-1}.

\subsection{The relative Whitney trick}

Let $S$ be a generically immersed oriented surface in an oriented $4$-manifold, (possibly disconnected and possibly with corners). Assume $S$ is transverse to $\bdry W$. Then $\bdry S\subseteq W$ is embedded, so that $S\cap \bdry W\subseteq \bdry S$ is a union of embedded circles and arcs with endpoints at corners of $S$.

 Let $p$ be a double point of $S$ and write $S_1$, $S_2$ for two local sheets of the immersion near $p$. Assume both sheets belong to components of $S$ that have nonempty intersection with $\partial W$. For $i=1,2$, choose embedded arcs $\alpha_i\subseteq S$ running from some $q_i\in \bdry S\cap \bdry W$ to $p$; see Figure~\ref{fig: rel Whitney trick 2} for a labelled schematic.

\begin{figure}[h]
\begin{picture}(310,80)
\put(0,5){\includegraphics[width=37.5mm]{relWhitArcs.pdf}}

\put(70,12){$\bdry W$}
\put(50,68){$S_1$}
\put(82,45){$S_2$}
\put(55,45){$p$}

\put(55,9){$q_1$}
\put(20,12){$q_2$}
\put(59,30){$\alpha_1$}
\put(25,41){$\alpha_2$}
\put(34,10){$\alpha_3$}

\put(100,5){\includegraphics[width=37.5mm]{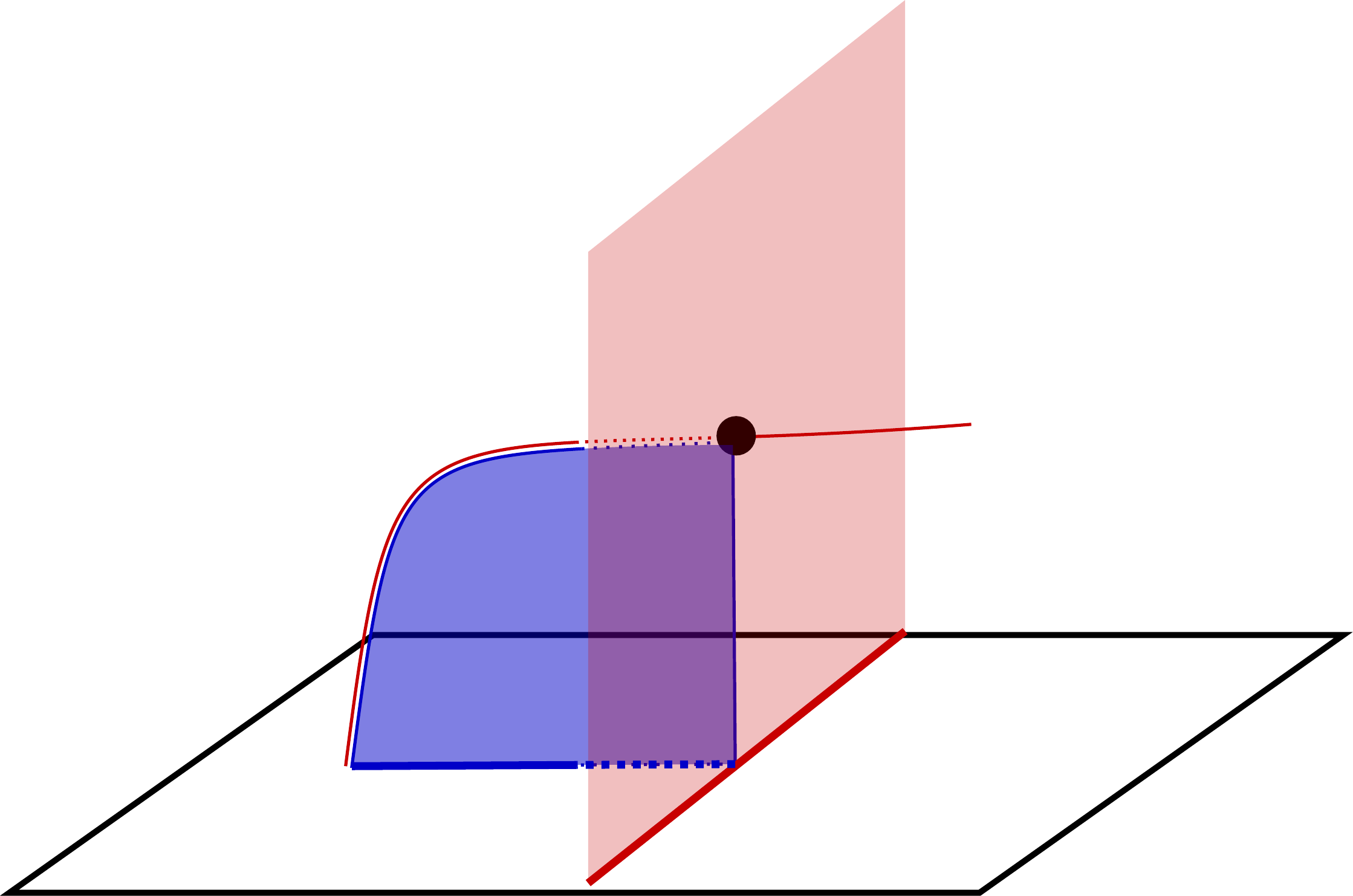}}
\put(155,45){$p$}
\put(135,30){$\Delta_p$}

\put(200,5){\includegraphics[width=37.5mm]{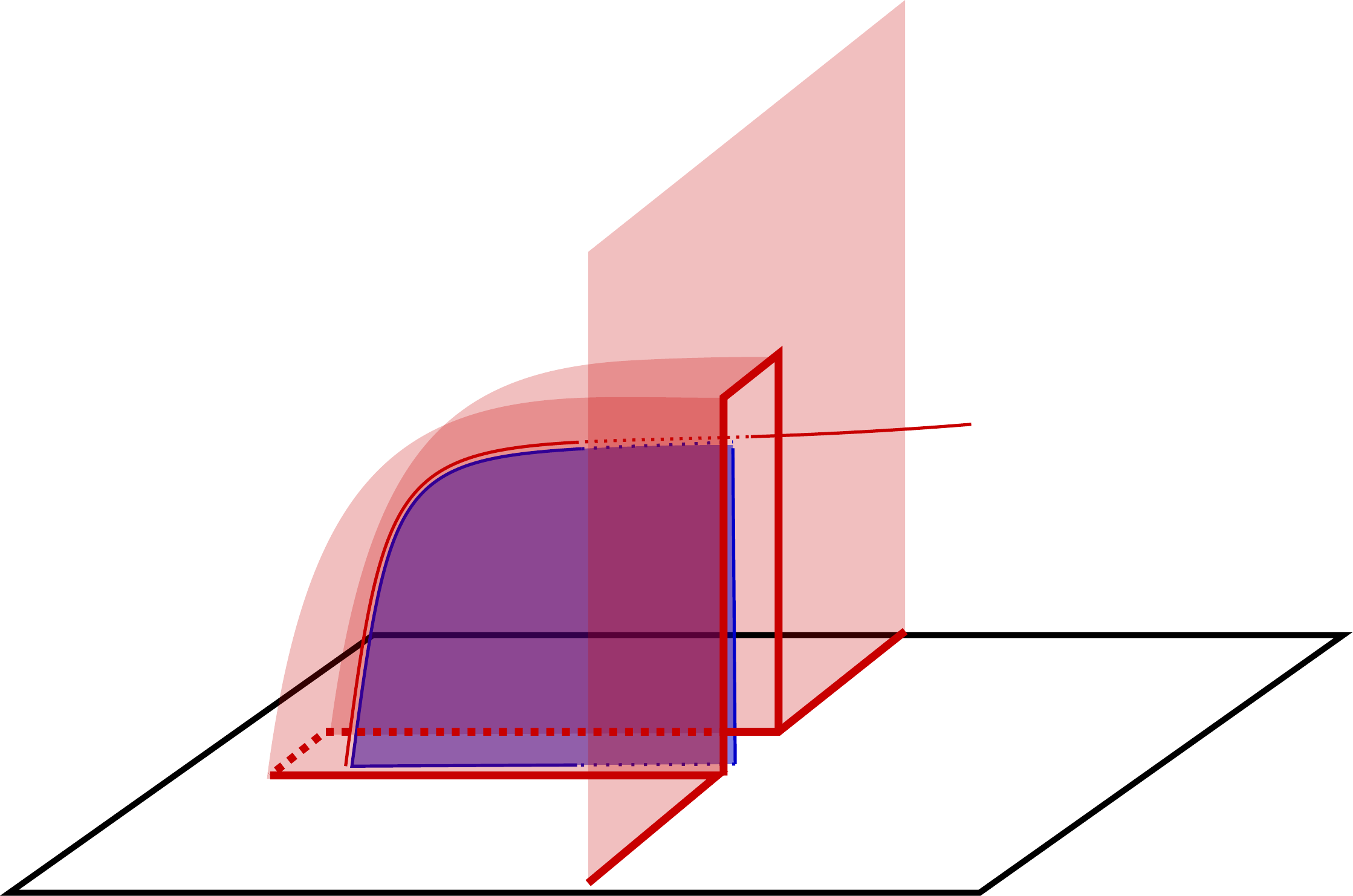}}
\end{picture}
\caption{Left to right:  A transverse point of intersection $p$ in $S_1\cap S_2$.  A relative Whitney disk, $\Delta_p$ for $p$.  The result of modifying $S_1$ by the relative Whitney trick.  }\label{fig: rel Whitney trick 2}
\end{figure} 

  We assume that $\alpha_i\cap(S_1\cup S_2)\subseteq S_i$, and that $\alpha_1$ and $\alpha_2$ are disjoint from each other and from all double points of $S$ other than their common endpoint at $p$.  Suppose that there exists an embedded arc $\alpha_3\subseteq \bdry W$ running from $q_2$ to $q_1$ with interior disjoint from $\bdry S$ such that the concatenation $\alpha_1*\alpha_2*\alpha_3$ is nullhomotopic in $W$.  Let $\Delta_p$ be an immersed disk in $W$ transverse to $S$ bounded by $\alpha_1*\alpha_2*\alpha_3$.
  
  \begin{definition} Let $S$ be an immersed oriented surface in an oriented 4-manifold $W$ and $p$ be a double point of $S$. Any disk $\Delta_p$ as above is called a \emph{relative Whitney disk} for $p$, and we call the corresponding $\alpha_3$ the \emph{relative Whitney arc} of $\Delta_p$.
  \end{definition}
  
  As we are working in the topological category, we take a moment to justify our later use of smooth concepts, such as tangent and normal vectors, by arranging the Whitney disk into a generic position near its boundary.
Let $U_1, U_2\subseteq S$ be closed regular neighbourhoods of $\alpha_1, \alpha_2\subseteq S$ respectively; each $U_i$ is thus homeomorphic to a closed disk, so that $\alpha_i$ is interior to $U_i$ except for its endpoint at $q_i$. We may assume, for some local sheets $S_1$ and $S_2$ of $p$, that $U_i\cap (S_1\cup S_2) = S_i$. By taking these neighbourhoods small enough we arrange that  $U_1\cap U_2 = \{p\}$ and $U_1$ and $U_2$ are disjoint from every double point of $S$ other than $p$.  As a consequence $U_1$ and $U_2$ are each embedded in $W$. Since $S$ is immersed, so is $U:=U_1\cup U_2$. Thus $U$ has a tubular neighbourhood $N_U\subseteq W$ homeomorphic to the result of plumbing together $U_1\times D^2$ and $U_2\times D^2$ and we may regard $U_1$ and $U_2$ as smoothly embedded disks in $N_U$ with the smooth structure pulled back from this plumbing. After changing $\Delta_p$ by a homotopy fixing its boundary we may assume that $\Delta_p\cap N_U\subseteq N_U$ is a smooth submanifold with corners. In the complement of the interior of $N_U$ we now see a properly immersed disk, $\Delta_p\sm \int(N_U)$.  Let $N\subseteq W\sm \int(N_U)$ be its tubular neighbourhood. We now have that $N_\Delta:=N_U\cup N$  is the result of gluing together $N_U$ and a tubular neighbourhood of $\Delta_p\smallsetminus \int(N_U)$ (a self plumbing of a disk) along a 3-ball in their shared boundary.  Being the result of gluing together two smooth 4-manifolds along a shared submanifold in their boundary, $N_\Delta$ is a smooth 4-manifold and $\Delta_p$, $U_1$ and $U_2$ are smoothly immersed surfaces in $N_\Delta$.

Introducing some notation, for points $a,b,c\in \RR^2$ denote by $\lineseg{a}{b}$ the straight line segment between $a$ and $b$, and by $\Delta_{abc}$ the triangle with vertices at $a$, $b$, and $c$. We will parametrize $\Delta_p$ by the triangle $\Delta_{xyz}\subseteq \RR^2$ of Figure~\ref{fig: triangle again}, sending the line segments $\lineseg{x}{y}$, $\lineseg{y}{z}$, and $\lineseg{z}{x}$ to $\alpha_1$, $\alpha_2$, and $\alpha_3$ respectively. By taking the neighbourhood $N_U$ small enough we may assume that $\Delta_p\cap N_U$ is parametrized by the region $R$ depicted in Figure~\ref{fig: triangle again}. We also construct a slightly larger immersed disk $\Delta_p^+\subset N_\Delta$ as follows. Let $w$ be the normal vector to $\alpha_2\subseteq N_\Delta$ given by the outward tangent to $\Delta_p\subseteq N_\Delta$. Define $\Delta_{p}^+$ by extending $\Delta_p$ in the $w$-direction along the length of the arc $\alpha_2$. We parametrize $\Delta_{p}^+$ by the larger triangle $\Delta^+_{xyz}:=\Delta_{xy^+z^+}\subseteq \RR^2$ of Figure~\ref{fig: triangle again}. Note that at $p$, the vector $w$ is tangent to $\alpha_1$ and to $S_1$.

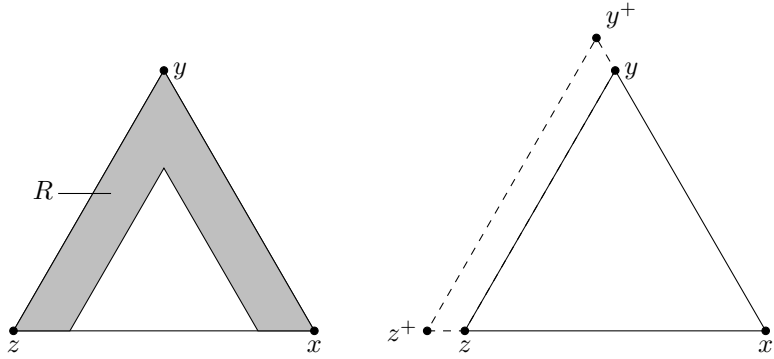
\begin{figure}[h]
\begin{tikzpicture}[scale=.5]

\node[] at (4,0) (x) {};
\node[] at ({4-1.5},0) (x') {};
\node[] at (0,{8*sin(60)}) (y) {};
\node[] at (-4,0) (z) {};
\node[] at ({0}, {(8-2*1.5)*sin(60)}) (y') {};
\node[] at ({-4+1.5},0) (z') {};

\draw[red](y)--(y');

\node[above] at ({-cos(60)}, {9*sin(60)}) (y+) {\phantom{$y^+$}};

\node[below] at (z) {$z$};
\node[below] at (z') {\phantom{$z'$}};
\node[below] at (x) {$x$};
\node[right] at (y) {$y$};
\node[below] at (z) {\phantom{$z$}};

\draw(x.center)--(y.center)--(z.center)--(x.center);

\draw[fill=lightgray] (z.center)--(y.center)--(x.center)--(x'.center)--(y'.center)--(z'.center)--(z.center);

\draw[fill=black] (x) circle (.1);
\draw[fill=black] (y) circle (.1);
\draw[fill=black] (z) circle (.1);

\node[left] at (-2.7,3.75) {$R$};
\put(-40,52){\line(10,0){20}}


\begin{scope}[shift={(12,0)}]
\node[] at (4,0) (x) {};
\node[] at (0,{8*sin(60)}) (y) {};
\node[] at (-4,0) (z) {};
\node[] at ({-cos(60)}, {9*sin(60)}) (y+) {};
\node[] at (-5,0) (z+) {};

\node[below] at (z) {$z$};
\node[left] at (z+) {$z^+$};
\node[below] at (x) {$x$};
\node[right] at (y) {$y$};
\node[above right] at (y+) {$y^+$};

\draw(x.center)--(y.center)--(z.center)--(x.center);
\draw[dashed] (z+.center)--(z.center)--(y.center)--(y+.center)--(z+.center);

\draw[fill=black] (x) circle (.1);
\draw[fill=black] (y) circle (.1);
\draw[fill=black] (z) circle (.1);
\draw[fill=black] (y+) circle (.1);
\draw[fill=black] (z+) circle (.1);

\end{scope}

\end{tikzpicture}
\caption{Left: The triangle $\Delta_{xyz}$ used to parametrize $\Delta_p$, together with $R$, the preimage of $\Delta_p\cap N_U$. Right: A larger triangle $\Delta_{xyz}^+ := \Delta_{xy^+z^+}$.}
  \label{fig: triangle again}
\end{figure}

By restricting the normal bundle $E$ of $\Delta^+_p$ to $\alpha_1*\alpha_2$ we obtain a 2-plane bundle $E|_{\alpha_1*\alpha_2}$ over $\lineseg{x}{y}\cup\lineseg{y}{z}$. At each point in $\alpha_i$ let $v_i$ be the tangent direction in $S_i$ normal to $\alpha_i$, and let $u_i$ be a common normal to both $\Delta_p$ and $S_i$, chosen to vary continuously (i.e.~to form a section of $E|_{\alpha_i}$). Moreover, choose $u_i$ so that $u_1=v_2$ and  $u_2=v_1$ at $p$.  Together these give a framing of $E|_{\alpha_1*\alpha_2}$.  Since $\Delta^+_{xyz}$ deformation retracts to $\lineseg{x}{y}\cup\lineseg{y}{z}$, this framing extends to a framing of $E$.  By the tubular neighbourhood theorem we find an identification of a (possibly smaller) tubular neighbourhood of $\Delta^+_p$ with a plumbed disk, which we parametrize by an immersion $\Phi:\Delta^+_{xyz}\times \RR^2 \looparrowright N_\Delta$ such that:

\begin{itemize}[leftmargin=0.9cm]\setlength\itemsep{0em}
\item $\Phi\left(\lineseg{x}{y}\times\{(0,0)\}\right)=\alpha_1$, $\Phi|_{\lineseg{x}{y\textcolor{blue}{^+}}\times \RR\times \{0\}}$ is an embedding with image in $U_1$, 

\item $\Phi\left(\lineseg{x}{y^+}\times\{(0,0)\}\right)\subseteq U_1$ is $\alpha_1$ extended slightly on $S_1$ along the direction tangent to $\alpha_1$ at $p$,

\item $\Phi(\lineseg{y}{z}\times\{(0,0)\})=\alpha_2$, $\Phi|_{\lineseg{y}{z}\times \{0\}\times \RR}$ is an embedding with image in $U_2$,

\item otherwise $\Phi\left(\left(\lineseg{x}{y^+}\cup\lineseg{y}{z}\right)\times \RR^2\right)$ is disjoint from $S$, and

\item $\Phi(\lineseg{z}{x}\times\{(0,0)\}) = \alpha_3$ and $\Phi\left(\lineseg{z}{x}\times \RR^2\right)\subseteq \bdry W$ is a tubular neighbourhood of $\alpha_3$.
\end{itemize}

We now take $U_1$, remove a neighbourhood of $\alpha_1$, and replace it with pushed-off copies of $\Delta^+_p$ together with a thickened $\alpha_2$, pushed up in the outwards facing tangent direction to $\Delta_p$. Precisely, we form the following:

\begin{multline*}
U_1':=\left(U_1\smallsetminus\Phi\left(\lineseg{x}{y^+}\times [-1,1]\times \{0\}\right)\right) \cup
\Phi\left(\Delta^+_{xyz}\times \{-1,1\}\times\{0\}\right) 
\\\cup \Phi\left(\lineseg{y^+}{z^+}\times[-1,1]\times\{0\}\right).
\end{multline*}

We will refer to the act of modifying $S$ by replacing $U_1$ by $U_1'$ the \emph{relative Whitney trick} along $\Delta_p$.

\begin{remark}\label{rmk: affect of RWT}
Here are some observations.
\begin{enumerate}[leftmargin=0.9cm, font=\upshape]\setlength\itemsep{0em}
\item $U_1'$ and $U_2$ are disjoint.

\item The image $\Phi(\Delta^+_{xyz}\times[-1,1]\times\{0\})$ parametrizes a homotopy from $U_1$ to $U_1'$. If $\Delta_p$ is not embedded then $U_1'$ is not embedded.

\item The homotopy describing the relative Whitney trick changes $\bdry S$ by a \emph{finger move} as depicted in Figure~\ref{fig: rel Whitney trick on bdry again}. In general, a finger move between embedded arcs $A_1$ and $A_2$ in a $3$-manifold is band-sum operation from $A_1$ to a meridional circle of $A_2$. This operation is specified by a choice of embedded arc from $A_1$ to $A_2$, together with a choice of framing for that arc relative to a fixed framing on the boundary of the arc. In our case $\alpha_3$ is this arc from $A_1$ to $A_2$, and the framing is determined by $\Phi$. For the purposes of this paper, we will not need to keep track of this specific framing, but we note that for future applications it might be desirable to do so.

\item If $F$ is an immersed surface in $W$ (for example a subsurface of $S$), meeting the interior of $\Delta_p$ transversely $n$ times, then the relative Whitney trick adds $2n$ points of intersection to $U_1'\cap F$. Moreover, for every point of self-intersection of $\Delta_p$, there are four new points of self intersection are added to $U_1'$ by performing the relative Whitney trick.

\item \label{item: Rel Whit Trick Smooth} If $W$ is a smooth 4-manifold in which $S$ and $\Delta_p$ are smoothly immersed, then the result of modifying $S$ by the Whitney trick using $\Delta_p$ is still smoothly immersed (after smoothing corners).

\end{enumerate}
\end{remark}

\begin{figure}[h]
\begin{picture}(300,95)
\put(0,5){\includegraphics[height=27.75mm]{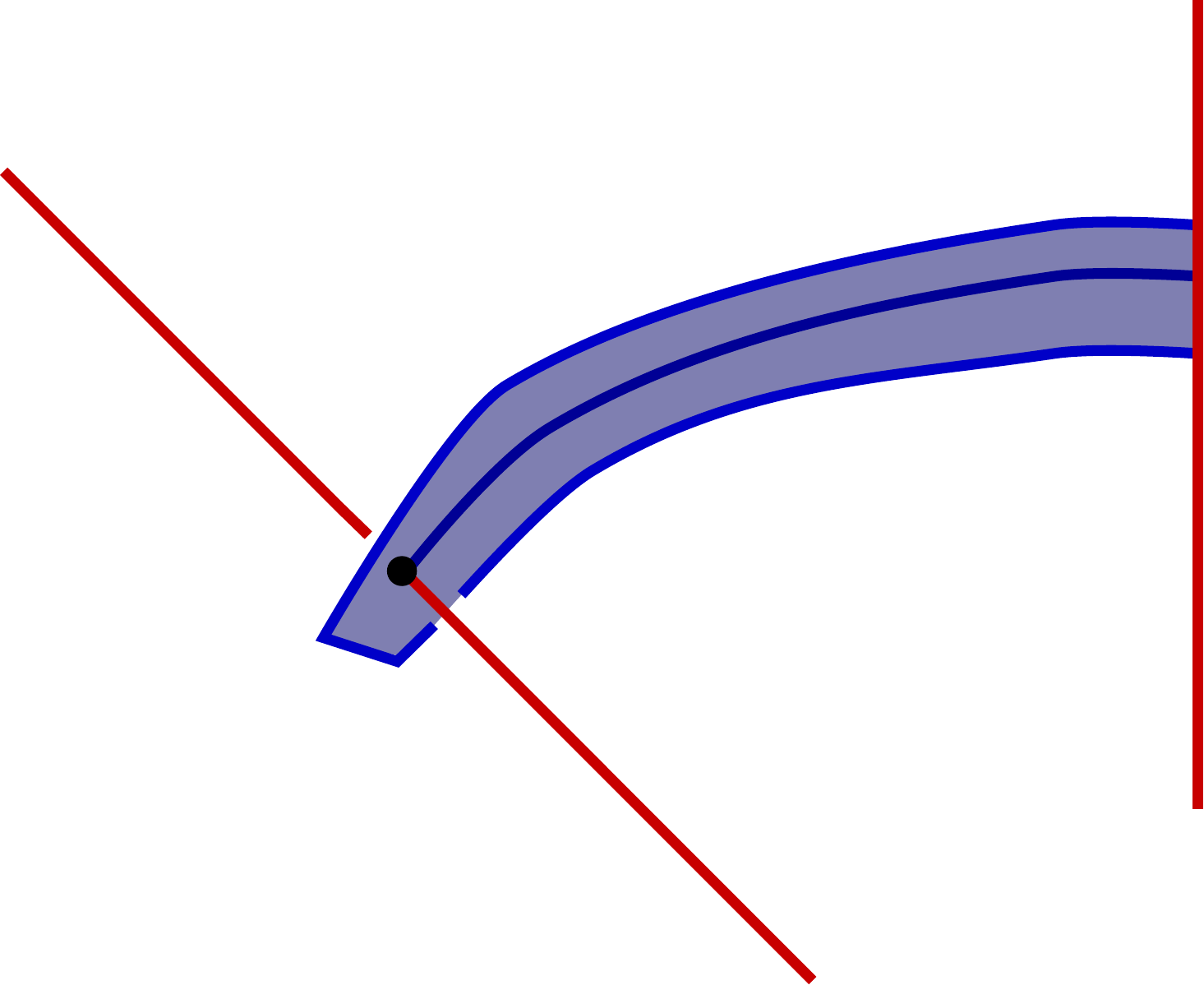}}
\put(39,10){\textcolor{diagramred}{$\bdry U_2$}}
\put(88,10){\textcolor{diagramred}{$\bdry U_1$}}
\put(17,72){\textcolor{diagramblue}{\tiny{$\Phi\left(\lineseg{z^+}{x}\times[-1,1]\times\{0\}\right)$}}}

\put(150,5){\includegraphics[height=27.75mm]{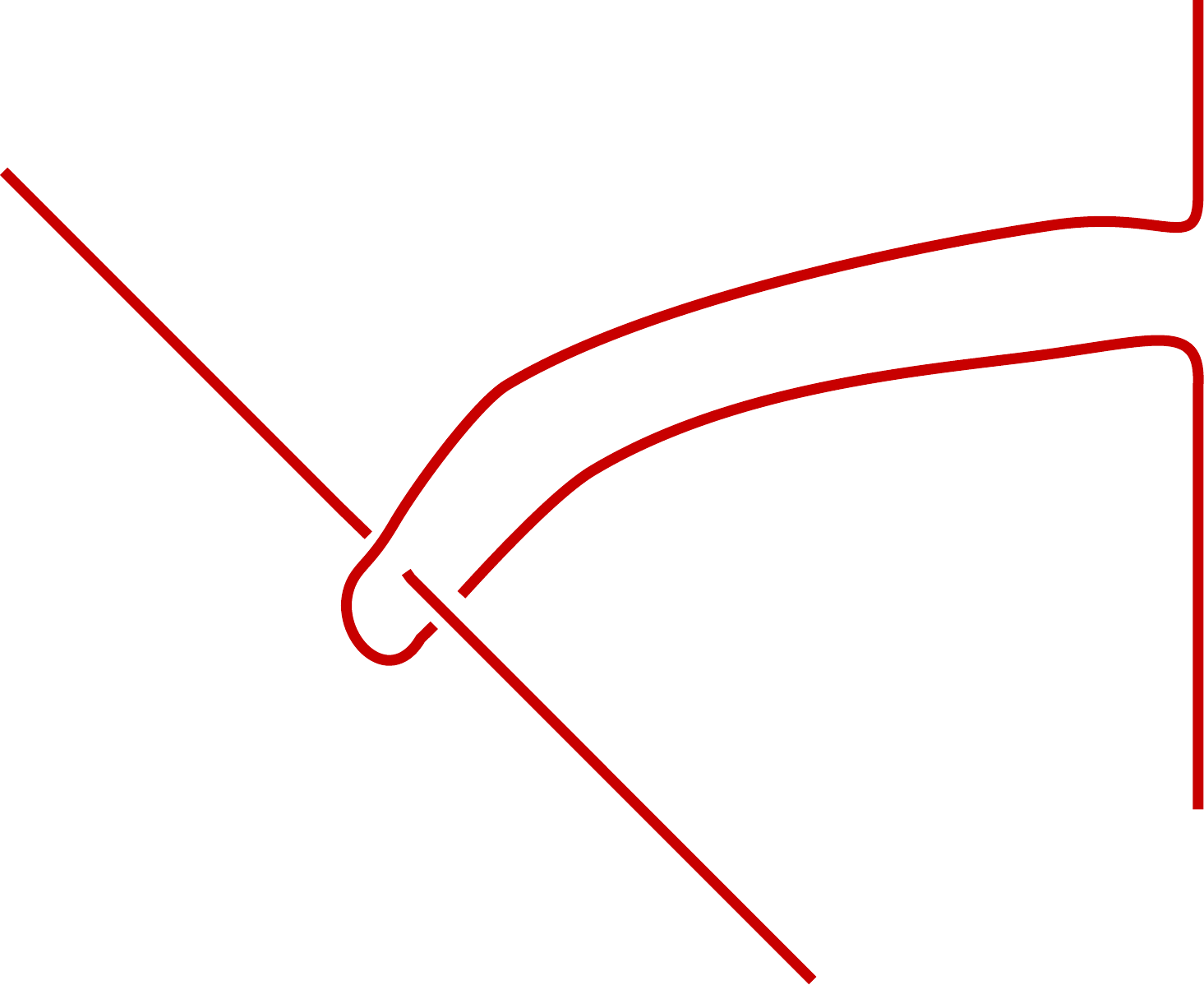}}
\put(189,7){\textcolor{diagramred}{$\bdry U_2$}}
\put(238,10){\textcolor{diagramred}{$\bdry U_1'$}}

\end{picture}
\caption{The relative Whitney trick affects
$\bdry S \cap \bdry W$ by a finger move.}
\label{fig: rel Whitney trick on bdry again}
\end{figure}

\section{Separating an immersed disk collection}\label{sec:separating}

Let $L$ be a link in a 3-manifold, whose components bound immersed disks in a bounded 4-manifold. In this section, we explain how to use the relative Whitney trick to homotope away all intersections between these disks.  

\begin{lemma}\label{lem:warmup} Let $W$ be a 4-manifold, $Y$ be a connected 3-manifold in $\bdry W$, and $L=L_1\cup L_2$ be a link in $Y$ whose components are nullhomotopic in $W$. If the inclusion induced map $\pi_1(Y)\to\pi_1(W)$ is surjective, then there exists a link $J=J_1\cup J_2$ in $Y$ which is freely homotopic to $L$ and bounds disjoint immersed disks in $W$. If $W$ is smooth, then these disks may be smoothly immersed. 

Moreover, if {$D_1$ and $D_2$}
 are transverse immersed disks in $W$ bounded by $L$, then we may choose a homotopy from $L$ to $J$ which restricts to an isotopy on each component of $L$ and changes a crossing between $L_1$ and $L_2$ exactly once for each point in $D_1 \cap D_2$.
\end{lemma}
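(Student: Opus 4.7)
My plan is to induct on $n := \#(D_1 \cap D_2)$, achieving one crossing change between $L_1$ and $L_2$ per intersection point. Since each $L_i$ is nullhomotopic in $W$ by hypothesis, I will start by fixing generically immersed disks $D_1, D_2 \subseteq W$ with $\partial D_i = L_i$, put in general position so $D_1 \cap D_2$ is a finite transverse set. If $n = 0$ there is nothing to prove, so I assume $n > 0$ and fix $p \in D_1 \cap D_2$. The target of the inductive step will be a new link $L^{(1)}$ obtained from $L$ by an isotopy on each component together with one crossing change between $L_1$ and $L_2$, along with immersed disks $D_1^{(1)}, D_2$ bounded by $L^{(1)}$ with $\#(D_1^{(1)} \cap D_2) = n - 1$.

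Next I would construct a relative Whitney disk for $p$. Pick embedded arcs $\alpha_1 \subseteq D_1$ and $\alpha_2 \subseteq D_2$ from chosen basepoints $q_1 \in L_1$ and $q_2 \in L_2$ to $p$, each avoiding all other double points. Using connectedness of $Y$, choose an embedded arc $\alpha_3' \subseteq Y$ from $q_2$ to $q_1$ with interior disjoint from $L$. The loop $\eta = \alpha_1 \ast \alpha_2 \ast \alpha_3'$ in $W$ represents a class in $\pi_1(W, q_1)$, and the surjectivity hypothesis $\pi_1(Y) \twoheadrightarrow \pi_1(W)$ guarantees a loop $\gamma \subseteq Y$ based at $q_1$ realizing $[\eta]^{-1}$. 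After a generic perturbation I replace $\alpha_3'$ by an embedded arc $\alpha_3 \subseteq Y$ from $q_2$ to $q_1$, disjoint from $L$ in its interior and homotopic rel endpoints to $\gamma \ast \alpha_3'$; the concatenation $\alpha_1 \ast \alpha_2 \ast \alpha_3$ is then nullhomotopic in $W$ and bounds the required Whitney disk $\Delta_p$, which I put in general position with $D_1 \cup D_2$.

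Applying the relative Whitney trick along $\Delta_p$ as in Section~\ref{sect: rel Whit trick} will yield a regular homotopy of $D_1$ to a new disk $D_1^{(1)}$, inducing a boundary homotopy of $L_1$. By Remark~\ref{rmk: affect of RWT}(iii) that boundary homotopy is a finger move along $\alpha_3$, with the only singular instant being a transverse pass of $L_1$ across $L_2$ near $q_2$; this is precisely one crossing change between $L_1$ and $L_2$. Because $\alpha_3$ is disjoint from $L_1$ in its interior, the boundary move is an isotopy of $L_1$ apart from that instant, so the homotopy from $L$ to $L^{(1)}$ restricts to an isotopy on each component of $L$. In the smooth case, Remark~\ref{rmk: affect of RWT}(v) shows $D_1^{(1)}$ can be kept smoothly immersed.

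The hard part will be controlling new intersections: by Remark~\ref{rmk: affect of RWT}(iv), each transverse interior intersection $q \in \Delta_p \cap D_2$ contributes two new points to $D_1^{(1)} \cap D_2$, while each $q \in \Delta_p \cap D_1$ contributes only new self-intersections of $D_1^{(1)}$ (the latter being harmless, as I only require disjointness of the two disks). The two new points in $D_1^{(1)} \cap D_2$ generated near each $q \in \Delta_p \cap D_2$ appear along the two push-offs of $\Delta_p$ used to construct $D_1^{(1)}$, and they are paired with opposite signs by a small Whitney disk obtained as a parallel copy of a neighbourhood of $q$ in $D_2$. Since this pairing Whitney disk lies entirely in the interior of $W$ and can be made arbitrarily small, the ordinary Whitney trick along it removes both new intersections without altering $\partial D_1^{(1)}$ and without producing further uncontrolled intersections. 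After performing these local cleanups, $(D_1^{(1)}, D_2)$ has exactly $n-1$ mutual intersections and $L^{(1)}$ bounds these disks, closing the induction; iterating $n$ times yields $J$ with disjoint immersed disks and a homotopy from $L$ to $J$ realizing exactly one crossing change per point of the original $D_1 \cap D_2$.
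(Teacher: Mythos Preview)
Your overall plan matches the paper's: induct on $\#(D_1\cap D_2)$, build a relative Whitney disk $\Delta_p$ using surjectivity of $\pi_1(Y)\to\pi_1(W)$, and perform the relative Whitney trick to kill $p$ at the cost of one crossing change. The divergence, and the gap, is in how you handle the unwanted interior intersections $\Delta_p\cap D_2$.

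Your cleanup claims that the two new points of $D_1^{(1)}\cap D_2$ created near each $q\in\Delta_p\cap D_2$ are cancelled by a ``small Whitney disk obtained as a parallel copy of a neighbourhood of $q$ in $D_2$.'' This description does not yield a Whitney disk. The two new points lie at $\Phi(q,\pm 1,0)$, one on each of the two parallel push-offs of $\Delta_p$ that form part of $D_1^{(1)}$. The arc between them in $D_2$ is indeed short, but any arc between them in $D_1^{(1)}$ must travel from one push-off sheet to the other, and these sheets are only joined far away (at the strip $\Phi(\overline{y^+z^+}\times[-1,1]\times\{0\})$ or back through $U_1$). So the Whitney circle is not small, no Whitney disk can be made ``arbitrarily small,'' and a parallel copy of a piece of $D_2$ simply does not have the Whitney circle as boundary. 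One can write down a genuine Whitney disk (the obvious rectangle $\gamma\times[-1,1]\times\{0\}$ for a path $\gamma\subset\Delta_{xyz}^+$ from $q$ to $\overline{y^+z^+}$), but then you must verify it is embedded, framed, and has interior disjoint from $D_1^{(1)}\cup D_2$; this requires $\gamma$ to dodge all other points of $\Delta_p\cap(D_1\cup D_2)$ and all double points of $\Delta_p$, none of which you address.

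The paper avoids this entirely by cleaning up \emph{before} the relative Whitney trick rather than after: for each $r\in\Delta_p\cap D_2$, choose an embedded arc $\beta\subset\Delta_p$ from $r$ to a point on $\alpha_2\subset D_2$ and perform a $4$-dimensional finger move on $D_2$ along $\beta$. This removes $r$ from $\Delta_p\cap D_2$ at the cost of two new self-intersections of $D_2$, which are harmless. Once $\Delta_p\cap D_2=\emptyset$, the relative Whitney trick on $D_1$ drops $\#(D_1\cap D_2)$ by exactly one with no new mixed intersections to worry about.
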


\begin{proof}
Let $D_1$ and $D_2$ be transverse immersed disks bounded by $L$.  We will show how to reduce the number $|D_1\cap D_2|$ by one, via the relative Whitney trick.    Notice that each point in $D_1\cap D_2$ can be thought of as a double point in $D_1\cup D_2$, and so it makes sense to apply the relative Whitney trick as outlined in Section~\ref{sect: rel Whit trick}.  As in Figure~\ref{fig: rel Whitney trick on bdry again} this relative Whitney trick 
has the effect of changing a single crossing between $L_1=\bdry D_1$ and $L_2=\bdry D_2$.

First, we alter $D_1$ and $D_2$ by a homotopy which is constant on the boundary so that $D_1$ and $D_2$ intersect transversely.  Let $p\in D_1\cap D_2$, $q_1\in L_1$, $q_2\in L_2$, $\alpha_1$ be an embedded arc in $D_1$ running from $q_1$ to $p$, and $\alpha_2$ be an embedded arc in $D_2$ running from $p$ to $q_2$.  We may assume that $\alpha_1$ and $\alpha_2$ are disjoint from all double points of $D_1$ and $D_2$ and miss all points in $D_1\cap D_2$ other than their common endpoint at $p$.  
Since $\pi_1(Y)\to \pi_1(W)$ is surjective, there is an  embedded arc $\alpha_3$ in $Y$ running from $q_2$ to $q_1$ so that the concatenation $\alpha_1*\alpha_2*\alpha_3$ is nullhomotopic in $W$.  Thus, there is a relative Whitney disk $\Delta_p$ for $p$. 

Performing the relative Whitney trick using $\Delta_p$ to modify $D_1$ will add two points to $D_1\cap D_2$ for each point in $\Delta_p\cap D_2$.  As the objective is to reduce the number $|D_1\cap D_2|$, our next goal must be the removal of all points in $\Delta_p\cap D_2$. Let $r\in \Delta_p\cap D_2$. Pick an embedded arc $\beta$ in $\Delta_p$ from $r$ to a point $t$ interior to $\alpha_2\subseteq D_2$ and disjoint from all double points and points of intersection. Perform a $4$-dimensional \emph{finger move} on $D_2$ along $\beta$ (cf.~\cite[\textsection 1.5]{Freedman-Quinn:1990-1}); a visualization of how this move changes $D_2$ appears in Figure~\ref{fig:FingerMove}.
We will refer to the modified $D_2$ by the same name. The cost of the finger move is to add two new points of self-intersection to $D_2$, but this will not concern us. Repeat this process at each point in $\Delta_p\cap D_2$.  A similar procedure may be used to remove points in $D_1\cap \Delta_p$, although this is not required in the proof of the lemma.

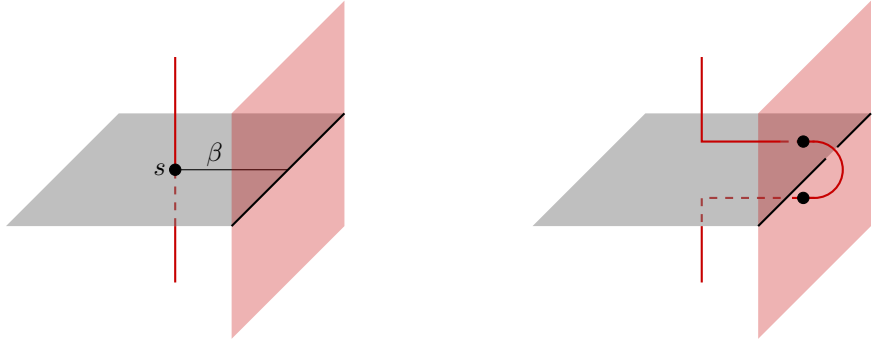
\begin{figure}
\begin{tikzpicture}

\begin{scope}[shift={(-2,0)}, scale=1.5]
\draw[diagramred, thick] (1.5,0)--(1.5,-.5);
\draw[diagramred, thick, dashed] (1.5,0)--(1.5,.5);
\draw[opacity=.5, fill=gray, draw=none] (0,0)--(2,0)--(3,1)--(1,1)--(0,0);
\draw[diagramred, thick] (1.5,1.5)--(1.5,.5);
\draw (1.5,.5)--(2.5,.5);

\draw[opacity=.3, fill=diagramred, thick, draw=none] (2,1)--(3,2)--(3,0)--(2,-1)--(2,1);
\draw[black, thick] (2,0)--(3,1);

\draw[fill=black] (1.5,.5) circle (.05);
\node[left] at  (1.5,.5) {$s$};
\node[above] at  (1.85,.45) {$\beta$};
\end{scope}

\begin{scope}[shift={(5,0)}, scale=1.5]

\draw[diagramred,thick,dashed](2.2,.75)--(2.5,.75);
\draw[diagramred,thick,dashed](2.5,.25)--(2.3,.25);
\draw[diagramred, thick, dashed] (2.2,.25)--(1.5,.25)--(1.5,0);
\draw[diagramred, thick] (1.5,0)--(1.5,-.5);
\draw[opacity=.5, fill=gray, draw=none] (0,0)--(2,0)--(3,1)--(1,1)--(0,0);
\draw[diagramred, thick] (1.5,1.5)--(1.5,.75)--(2.2,.75);
\draw[opacity=.3, fill=diagramred, thick, draw=none] (2,1)--(3,2)--(3,0)--(2,-1)--(2,1);

\draw[diagramred,thick](2.4,.75)--(2.5,.75) arc (90:-90:.25)--(2.4,.25);
\draw[fill=black] (2.4,.25) circle (.05);
\draw[fill=black] (2.4,.75) circle (.05);

\draw[black, thick] (2,0)--(2.6,.6);(3,1);
\draw[black, thick] (2.7,.7)--(3,1);(3,1);

\end{scope}
\end{tikzpicture}
\caption{Left: A point $s$ in the intersection of $\Delta_p$ (gray) with $D_2$ (red) together with an embedded arc $\beta$ in $\Delta_p$ from $s$ to $\alpha_2\subseteq \Delta_2$.  Right: A homotopy of $D_2$ removes the point of intersection at $\phi(s)$ and introduces two new points of self intersection for $D_2$.}
\label{fig:FingerMove}
\end{figure}

Now perform the relative Whitney trick using $\Delta_p$ to modify $D_1$. As we have $D_2\cap \Delta_p=\emptyset$, the relative Whitney trick reduces $|D_1\cap D_2|$ by one. By iterating the procedure above, we achieve that $|D_1\cap D_2|=0$.  Since each application of the relative Whitney trick reduces $|D_1\cap D_2|$ by one and changes one crossing between $L_1$ and $L_2$, the last claim of the lemma follows.

For the statement regarding smoothly immersed disks, if $W$ is smooth, then we can arrange that $D_1, D_2$ as well as each $\Delta_p$ are smoothly immersed.  As a consequence of Remark~\ref{rmk: affect of RWT} \pref{item: Rel Whit Trick Smooth}, this would result in the disjoint disks produced being smoothly immersed.
\end{proof}

When one tries to extend Lemma \ref{lem:warmup} to links with more components, a new complication arises. Assume the hypotheses of the lemma, but now assume that $D_1,\dots, D_n$ is a collection of $n$ transverse immersed disks. Consider two of these immersed disks $D_i$ and $D_j$. We wish to separate them by removing intersection points with the relative Whitney trick. Let $p\in D_i\cap D_j$ and $\Delta_p$ be a relative Whitney disk for $p$. If $\Delta_p$ intersects a disk $D_k$ for some $k\notin \{i,j\}$, then performing the relative Whitney trick using $\Delta_p$ 
to modify $D_i$ will produce new points in $D_i\cap D_k$. Hence, we would need a way to arrange that the relative Whitney disk $\Delta_p$ is disjoint from $D_k$ for all $k\notin \{i,j\}$. Also, note that if $k\in \{i,j\}$, as in the proof of Lemma~\ref{lem:warmup}, we may perform a finger move on $D_k$ to remove the intersection points in $\Delta_p \cap D_k$  for the price of introducing more self intersections.

Suppose $k\notin \{i,j\}$ and let $r\in \Delta_p\cap D_k$. Our strategy to remove $r$ is to modify the disk $\Delta_p$ itself by a relative Whitney trick using some relative Whitney disk~$\Delta_r$ for $r$. Of course, the use of $\Delta_r$ may add new intersection points to $\Delta_p$, according to what intersects $\Delta_r$, so we pause to consider this. Intersection points in $\Delta_r\cap D_i$ and $\Delta_r\cap D_j$ are not a problem because performing the relative Whitney trick using $\Delta_r$ to modify $\Delta_p$ only adds points to $\Delta_p\cap D_i$ and $\Delta_p\cap D_j$, and these points can be removed by performing finger moves as in the proof of Lemma~\ref{lem:warmup}.
In fact, an intersection point in $\Delta_r\cap D_k$ is also not a problem, as we can now perform a finger move on $D_k$ to remove it, at the cost of adding two self-intersections to $D_k$. So intersections between $\Delta_r$ and each of $D_i$, $D_j$ and $D_k$ are all unproblematic for us, whereas on $\Delta_p$ we could only deal with the first two types. The above argument allows us to extend Lemma~\ref{lem:warmup} to 3-component links. For links with more than 3-components there may be further intersection types in $\Delta_r$ that we cannot yet deal with. This suggests an induction: adding relative Whitney disks at each stage, and increasing the number of intersection types we know how to remove with finger moves. Eventually we know how to deal with all intersection types. Then a series of finger moves and relative Whitney tricks will remove the intersection point $r$, with the only cost a possible increase in self-intersection for the disks. We now give more detailed proof. Note that the following statement is more general than Proposition~\ref{prop: disk sep in intro}.

\begin{proposition}\label{prop:get immersed disks}Let $W$ be a 4-manifold, $Y$ be a connected 3-manifold in $\bdry W$, and $L=L_1\cup\dots \cup L_n$ be a link in $Y$ whose components are nullhomotopic in $W$. If the inclusion induced map $\pi_1(Y)\to\pi_1(W)$ is surjective, then there exists a link $J=J_1\cup\dots \cup J_n$ in $Y$ which is freely homotopic to $L$ and bounds disjoint immersed disks in $W$. If $W$ is smooth, then these disks may be smoothly immersed.

Moreover, if {$D_1, \dots , D_n$} are transverse immersed disks in $W$ bounded by $L$, then we may choose a homotopy from $L$ to $J$ which restricts to an isotopy on each component of $L$ and for any $i\neq j$ changes a crossing between the $i^{\text{th}}$ and $j^{\text{th}}$ components exactly once for each intersection point in $D_i\cap D_j$.\end{proposition}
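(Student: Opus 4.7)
The plan is to reduce $\sum_{i<j}|D_i \cap D_j|$ to zero by a recursive procedure based on the relative Whitney trick. First I will reduce the general statement to the ``moreover'' one: since each $L_i$ is nullhomotopic in $W$, topological transversality produces generically immersed transverse disks $D_1, \dots, D_n$ bounded by $L$, so we may always assume such disks are given. The principal subroutine removes a single intersection $p \in D_i \cap D_j$. Following Lemma~\ref{lem:warmup}, pick arcs $\alpha_1 \subseteq D_i$ and $\alpha_2 \subseteq D_j$ from $p$ to boundary points $q_1 \in L_i$, $q_2 \in L_j$, disjoint from all other double and intersection points. The surjectivity of $\pi_1(Y) \to \pi_1(W)$ supplies a boundary arc $\alpha_3 \subseteq Y$ making $\alpha_1 * \alpha_2 * \alpha_3$ nullhomotopic in $W$: take any initial arc in $Y$ from $q_2$ to $q_1$, compute the class of the resulting loop in $\pi_1(W)$, and concatenate with a $Y$-loop (obtained by surjectivity) representing its inverse. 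A generic nullhomotopy yields a relative Whitney disk $\Delta_p$.

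For $n = 2$ this concludes the argument via Lemma~\ref{lem:warmup}, but for $n \geq 3$ the disk $\Delta_p$ may intersect some $D_k$ with $k \notin \{i,j\}$, and applying the Whitney trick naively would convert each such intersection into two new intersections of $D_i$ with $D_k$. I therefore ``clean'' $\Delta_p$ recursively. Assign to each auxiliary Whitney disk a \emph{lineage} $\Lambda \subseteq \{1, \dots, n\}$: $\Delta_p$ has lineage $\{i,j\}$, and a Whitney disk pairing an intersection between a Whitney disk of lineage $\Lambda$ and some $D_l$ with $l \notin \Lambda$ has lineage $\Lambda \cup \{l\}$. The nested cleaning claim, proved by downward induction on $n - |\Lambda|$, reads: any Whitney disk $\Delta$ of lineage $\Lambda$ can be modified by finger moves and relative Whitney tricks on deeper auxiliary Whitney disks so that $\Delta$ intersects no $D_m$ with $m \notin \Lambda$. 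The base case $|\Lambda|=n$ is vacuous. For the inductive step, for each $r \in \Delta \cap D_l$ with $l \notin \Lambda$: construct a Whitney disk $\Delta_r$ of lineage $\Lambda \cup \{l\}$ by the same $\pi_1$-argument; clean it by the inductive hypothesis; remove $\Delta_r \cap D_l$ by finger moves on $D_l$ along arcs in $\Delta_r$ ending on $D_l \cap \partial \Delta_r$ as in Lemma~\ref{lem:warmup}; and apply the relative Whitney trick to $\Delta$ using $\Delta_r$. By the cleanliness of $\Delta_r$ after these finger moves, the only new intersections introduced on $\Delta$ lie with $D_s$ for $s \in \Lambda$, which are not foreign, so the cleaning claim is preserved and $|\Delta \cap D_l|$ strictly decreases. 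Iterating exhausts every foreign intersection.

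With $\Delta_p$ made clean, further finger moves remove $\Delta_p \cap D_j$ as in Lemma~\ref{lem:warmup}, and the Whitney trick then removes $p$ without creating any new inter-disk intersections among the $D_k$ --- only new self-intersections on the various $D_k$. Iterating over all initial intersections produces disjoint immersed disks bounded by a link $J$ freely homotopic to $L$. Each top-level Whitney trick contributes exactly one crossing change between $L_i$ and $L_j$ in $\partial W$ (Remark~\ref{rmk: affect of RWT}(3) and Figure~\ref{fig: rel Whitney trick on bdry again}); all interior finger moves and all deeper Whitney tricks (which modify only auxiliary boundary arcs of Whitney disks, not $L$) leave $L$ untouched. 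Hence the total number of crossing changes between $L_i$ and $L_j$ equals the original $|D_i \cap D_j|$, and the restriction to each $L_i$ is an isotopy, yielding the ``moreover'' conclusion. The smooth case follows from Remark~\ref{rmk: affect of RWT}\pref{item: Rel Whit Trick Smooth}. The main obstacle I anticipate is ensuring that the finger moves removing $\Delta_r \cap D_l$, and the subsequent cleaning operations on deeper branches of the recursion tree, do not invalidate the inductive hypothesis; resolving this requires processing the tree bottom-up, so that each auxiliary Whitney disk is used immediately after being cleaned and before any sibling operations can reintroduce foreign intersections.
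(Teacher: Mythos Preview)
Your proposal is correct and follows essentially the same approach as the paper. Your ``lineage'' $\Lambda$ is exactly the paper's set of ``acceptable numbers,'' your downward induction on $n-|\Lambda|$ is equivalent to the paper's procedure of building a rooted tree of relative Whitney disks and iteratively processing its leaves, and the concern you flag in your final paragraph (that cleaning operations on one branch must not spoil another) is precisely the point the paper addresses by showing that the finger moves used at a leaf create only acceptable intersections on its parent and nowhere else in the tree.
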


\begin{proof}
Let {$D_1, \dots , D_n$} be transverse immersed disks bounded by $L$. Consider two of these immersed disks $D_i$ and $D_j$ and let $p\in D_i\cap D_j$.  We will modify $D_1 \cup \dots \cup D_n$  by a homotopy which is constant on the boundary and which does not change $|D_k\cap D_\ell|$ for any $k\neq \ell$.  Afterwards we will produce a relative Whitney disk $\Delta_p$ associated with $p$ so that $\Delta_p\cap D_k=\emptyset$ for all $k\neq i$. Once we have accomplished this,  the relative Whitney trick using $\Delta_p$ to modify $D_i$ preserves $|D_k\cap D_\ell|$ for all $\{k,\ell\}\neq \{i,j\}$ and will reduce this number by one if $\{k,\ell\}=\{i,j\}$. As in Lemma~\ref{lem:warmup}, it also affects $L$ by a homotopy obtained by changing a crossing between the $i^{\text{th}}$ and the $j^{\text{th}}$ component.  

We will call the point $p\in D_{i}\cap D_{j}$ an \emph{order zero} intersection point.  Choose a relative Whitney disk $\Delta_p$ associated with $p$ and call it an \emph{order one} relative Whitney disk. As in the proof of Lemma~\ref{lem:warmup}, a relative Whitney disk exists since  the induced map $\pi_1(Y)\to\pi_1(W)$ is surjective. In fact, for any double point, we can find an associated relative Whitney disk.  We define the set of \emph{acceptable numbers} for $\Delta_p$ to be $A_p = \{i,j\}$.  An intersection point $r\in \Delta_p\cap D_k$ is called \emph{unacceptable} for $\Delta_p$ if $k\not\in A_p$. We now make an inductive definition. Let $m\in \N$, suppose that $\Delta_q$ is an order $m$ relative Whitney disk with set of acceptable numbers $A_q\subseteq\{1,\dots,n\}$ and that $r\in \Delta_{q}\cap D_{k}$ is some unacceptable intersection point for $\Delta_q$.  We will call the point  $r$ an \emph{order $m$} intersection point.  An associated relative Whitney disk $\Delta_r$ is called an \emph{order $m+1$} relative Whitney disk.  The set of \emph{acceptable numbers} for $\Delta_r$ is defined to be $A_r := A_q\cup \{k\}$.  It follows from induction that if $\Delta_q$ is an order $m$ relative Whitney disk, then $|A_q| = m+1$.  In particular, if $\Delta_q$ is an order $n-1$ relative Whitney disk, then $A_q=\{1,\dots, n\}$ and every intersection point $r\in \Delta_{q}\cap D_{k}$ is acceptable.

Let $\mathcal{D}_1=\{\Delta_p\}$. We make an inductive construction. Let $m\in \N$ and suppose $\mathcal{D}_{m}$ is a set of order $m$ relative Whitney disks. For each $\Delta_q\in\mathcal{D}_{m}$ and for each unacceptable order $m$ intersection $r\in \Delta_q$, choose an order $m+1$ relative Whitney disk $\Delta_r$. Write $\mathcal{D}_{m+1}$ for the set consisting of a single choice of $\Delta_r$ for each such $q$ and $r$. Write $\mathcal{D}:=\mathcal{D}_0\cup\mathcal{D}_1\cup\dots\cup\mathcal{D}_{n-1}$. We now organize $\D$ into a tree with root $\Delta_p$ by declaring that for any $\Delta_q\in \mathcal{D}_m$ and any unacceptable intersection point $r\in \Delta_q\cap D_k$ the relative Whitney disk $\Delta_r$ is a descendent of $\Delta_q$. Notice that a vertex $\Delta_q$ on this graph is a leaf if and only if it has no unacceptable intersections.  Consequentially, any order $n-1$ relative Whitney disk in $\D$ is a leaf.

Suppose that  $\Delta_r\in\mathcal{D}_m$ is a leaf of order $m>1$. We will homotope $D_1\cup\dots \cup D_n$ while preserving $D_k\cap D_\ell$ for all $k\neq \ell$, preserving $\D$, and without introducing any new unacceptable intersection points.  The end result will have that the interior of $\Delta_r$ is disjoint from $D_1\cup\dots \cup D_n$. 

 Proceeding, since $\Delta_r$ is order $m>1$, it is a descendent of some $\Delta_q$.  Thus for some $k$, we have that $r\in \Delta_q \cap D_k$ where $r$ is an unacceptable intersection point of order $m-1$, and $A_r = A_q\cup \{k\}$. Suppose $\Delta_{r} \cap D_{\ell}$ is nonempty for some $\ell$. If $\ell = k$, then for each point perform a finger move on $D_{k}$, along an embedded arc in $\Delta_r$, to remove this intersection point, with the cost of producing two new self-intersections in $D_{k}$. If $\ell \neq k$, then we perform a finger move on $D_\ell$, along an embedded arc in $\Delta_r$, to remove this point of intersection, with the cost of producing two new points in $\Delta_{q} \cap D_{\ell}$.  Since~$\Delta_r$ is a leaf, $\ell$ must be in $A_r = A_q\cup\{k\}$.  As $\ell\neq k$, it must be that $\ell\in A_q$, and so these two new points of intersection are acceptible.  We have now arranged that the interior of $\Delta_r$ is disjoint from $D_1\cup\dots \cup D_n$.  Importantly, we have created no new unacceptable intersections.

We now modify $\Delta_q$ by the relative Whitney trick using $\Delta_r$.  This reduces the number of unacceptable intersections in~$\Delta_q$ by one and eliminates the leaf at $\Delta_r$. The possible cost of this procedure is to produce new self-intersections in $D_k$ and new acceptable intersections in $\Delta_q$.  The affect on the boundary of this move is to change the relative Whitney arc associated with $\Delta_q$ by a homotopy passing it through a component of $L$.  In particular, the original link $L$ is preserved.  

Iterate the modification of the previous three {paragraphs} until the tree $\D$ has no leaves of order $m>1$.  This means that $\D=\{\Delta_p\}$ and so $\Delta_p$ is a leaf.  Thus $\Delta_p\cap D_k=\emptyset$ for all $k\notin \{i,j\}$.  By performing finger moves as in Lemma~\ref{lem:warmup}, we arrange that $\Delta_p$ is disjoint from $D_j$ at a cost of adding self intersections to $D_j$.  Modifying $D_i$ by the relative Whitney trick using $\Delta_p$ reduces~$|D_i\cap D_j|$ by exactly one at the possible expense of increasing the self-intersections of $D_i$.  On the boundary this relative Whitney move affects a single crossing change between $L_i$ and $L_j$ as claimed.  

{Again, the same argument from the proof of Lemma~\ref{lem:warmup} gives the statement regarding smoothly immersed disks.}
\end{proof}

\begin{remark}
The idea of finding increasingly high order relative Whitney disks used in the proof of Proposition~\ref{prop:get immersed disks} is reminiscent of the concept of a Whitney tower; see for example \cite{CST2012, MR3770161}.  It motivates and is an example of what we call a relative Whitney tower, a concept whose formal definition appears in Section~\ref{sect: towers}.  
\end{remark}

\section{Every link in a homology sphere is freely homotopic to a slice link}\label{sec:homotopictoslice}

Cha-Kim-Powell \cite{Cha-Kim-Powell:2020-1} have obtained conditions that ensure a link in $S^3$ is freely slice. We now describe a generalization of these conditions for links in a general homology sphere. Links satisfying the generalized conditions are also freely slice, as we confirm in Appendix \ref{appendix}. We then describe how the disk separation results from Section \ref{sec:separating} are used to modify any link in a homology sphere by a homotopy to satisfy the generalized Cha-Kim-Powell conditions. This will prove Theorem~\ref{thm:main}, the main theorem of the paper.  We first give some definitions. Recall that a link $L$ in a homology sphere $Y$ is a \emph{boundary link} if it bounds pairwise disjoint Seifert surfaces in $Y$ and a collection of such surfaces is called a \emph{boundary link Seifert surface} for $L$. Lastly, a link $L$ is \emph{freely slice} if it bounds a collection of disjoint locally flat disks $D$ in the contractible topological 4-manifold $X$ bounded by $Y$ such that $ \pi_1(X\smallsetminus D)$ is a free group generated by the meridians of $L$.

\begin{definition}\label{def:goodbasis} Given a boundary link $L$ together with a boundary link Seifert surface $F$, a \emph{good basis} is a collection $\{a_i,b_i\}_{1\leq i \leq g}$ of simple closed curves on $F$, whose geometric intersections are symplectic, that represent a basis for $H_1(F;\Z)$, and such that the Seifert matrix of $F$ with respect to this basis is reducible by a sequence of elementary $S$-reductions to the \emph{null matrix}

\setlength\dashlinedash{0.2pt}
\setlength\dashlinegap{1.5pt}
\setlength\arrayrulewidth{0.3pt}
\[
\left[
\begin{array}{c:c:c:c:c}
\begin{array}{cc}0&\epsilon_1
\\1-\epsilon_1&0\end{array} &
\begin{array}{cc}0&*
\\0&*\end{array} &
\begin{array}{cc}0&*
\\0&*\end{array} 
&\dots&
\begin{array}{cc}0&*
\\0&*\end{array} 
\\
\hdashline
\begin{array}{cc}0&0
\\**&*\end{array} &
\begin{array}{cc}0&\epsilon_2
\\1-\epsilon_2&0\end{array} &
\begin{array}{cc}0&*
\\0&*\end{array}
&\dots&
\begin{array}{cc}0&*
\\0&*\end{array} 
 \\
\hdashline
\begin{array}{cc}0&0
\\**&*\end{array} &
\begin{array}{cc}0&0
\\**&*\end{array} &
\begin{array}{cc}0&\epsilon_3
\\1-\epsilon_3&0\end{array}
&\dots&
\begin{array}{cc}0&*
\\0&*\end{array} 
 \\
\hdashline
\vdots&\vdots&\vdots&\ddots&\vdots\\
\hdashline
\begin{array}{cc}0&0
\\**&*\end{array} &
\begin{array}{cc}0&0
\\**&*\end{array} &
\begin{array}{cc}0&0
\\**&*\end{array} 
&\dots&
\begin{array}{cc}0&\epsilon_g
\\1-\epsilon_g&0
\end{array}
\end{array}
\right]
\]
where $\epsilon_i\in\{0,1\}$ for each $i$ and each $\ast$ represents some integer.
\end{definition}

\begin{definition}\label{def:disky}
Let $L$ be a boundary link in a homology sphere $Y$, $F$ be a boundary link Seifert surface for $L$, and  $\{a_i,b_i\}_{1\leq i \leq g}$ be a good basis for $L$ on $F$. For each $i$, let $b_i'$ be the result of pushing $b_i$ off of $F$ such that it has zero linking with $a_i$, and $(b_i')^+$ be a zero linking parallel copy of $b_i'$. 

We say that $\{a_i,b_i\}_{1\leq i \leq g}$ is a \emph{good disky basis} for $L$ if there exist immersed disks $$\left\{\Delta_j^+,\Delta_i \mid 1\leq j \leq 2g, 1\leq i \leq g\right\}$$ in the contractible 4-manifold bounded by $Y$, such that $\bdry \Delta_{i}^+ = a_i$, $\bdry \Delta_{g+i}^{+} = (b_i')^+$, and $\bdry \Delta_i = b_i'$ for each $i$ and all disks are pairwise disjoint except possibly for intersections among $\{\Delta_j^+\}_{1\leq j \leq 2g}$.
\end{definition}

\begin{remark}
Cha-Kim-Powell use the condition of `homotopy trivial$^+$' in addition to being a good basis. As in the proof of \cite[Proposition 4.3]{Cha-Kim-Powell:2020-1} it follows from 
{\cite[Remark 3.2 (1) and Lemma 3.3]{Cha-Kim-Powell:2020-1}}
 that a homotopy trivial$^+$ good basis for a boundary link in $S^3$ satisfies the conditions of Definition~\ref{def:disky}.  
\end{remark}

We can now state a version of the Cha-Kim-Powell theorem for links in general homology spheres.

\begin{theorem}\label{thm:CKP} A boundary link in a homology sphere with a good disky basis is freely slice.
\end{theorem}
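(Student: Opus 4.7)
The plan is to follow the surgery-theoretic strategy sketched in the introduction, verifying that the key steps from \cite{Cha-Kim-Powell:2020-1} carry over from $S^3$ to an arbitrary homology sphere $Y$. First I would construct the $4$-manifold $V_F := X_F \cup (S^1 \times H)$ with $\bdry V_F = M_L$ exactly as in the introduction. Choosing the gluing homeomorphism $\varphi\colon F \cong \bdry H$ so that each basis pair $(a_i, b_i)$ from Definition~\ref{def:goodbasis} is sent to a standard symplectic basis on $\bdry H_i$ (with $b_i$ bounding a meridian disk in $H_i$) will, by van Kampen, ensure $\pi_1(V_F)$ remains the free group on meridians of $L$, while leaving $H_2(V_F)$ generated by a collection of classes naturally built from $S^1 \times F$ and the attached handlebodies.

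Next I would assemble a union of framed immersed $2$-spheres representing these $H_2$-classes, by capping off the disks supplied by the good disky basis with properly embedded discs in $S^1 \times H$ that are dual to the cores of the handlebodies. Concretely, for each $i$ the disk $\Delta_i$ bounded by $b_i'$ (pushed into $X_F$) caps off, via a meridian disk in $S^1 \times H_i$, into an immersed sphere $B_i$; the disks $\Delta_j^+$ assemble analogously into immersed spheres $A_j$. The symplectic pairing $a_i \cdot b_i = 1$ on $F$ provides the unique canonical intersection making each $(A_i, B_i)$ into an algebraic transverse pair.

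The main obstacle, and the most delicate step, will be verifying the hypotheses of \cite[Theorem 6.1]{Freedman-Quinn:1990-1}: that the resulting collection is $\pi_1$-null immersed in $V_F$ and that all remaining intersections (within $\{A_j\}$, within $\{B_i\}$, and between $A_j$ and $B_i$ for $i \neq j$) vanish algebraically in the appropriate quotient of $\Z[\pi_1(V_F)]$. This is exactly where the null-matrix hypothesis in Definition~\ref{def:goodbasis} does its work: the $\epsilon_i$ entries and the successive $S$-reductions of the Seifert matrix encode precisely the intersection and self-intersection data of the capped-off spheres modulo what is permissible in the Freedman--Quinn quotient, and the existence of the full disk collection $\{\Delta_j^+, \Delta_i\}$ (as opposed to just the algebraic data) is what converts this bookkeeping into actual geometric intersections that cancel after finger moves and further Whitney maneuvers. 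The $\pi_1$-null condition is checked by confirming that each sphere lies in a subregion of $V_F$ whose fundamental group maps trivially into $\pi_1(V_F)$, which follows from the free-on-meridians structure together with the fact that each basis curve becomes nullhomotopic either in the glued-in handlebody or via the disky data.

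Granted these hypotheses, \cite[Theorem 6.1]{Freedman-Quinn:1990-1} delivers framed embedded $2$-spheres $\widetilde B_i$ with embedded transverse duals $\widetilde A_i$ in $V_F$. Surgery along the $\widetilde B_i$ kills all of $H_2(V_F)$, producing a $4$-manifold $W$ with $\bdry W = M_L$, free fundamental group on meridians, and trivial $H_2$. Attaching $2$-handles to $W$ along the meridians of $L$ reverses the $0$-surgery, yielding a compact simply-connected $4$-manifold $X'$ with trivial $H_2$ and $\bdry X' = Y$, i.e. a contractible topological $4$-manifold. By the uniqueness of such contractible $4$-manifolds bounded by $Y$ noted in the introduction, $X'$ is homeomorphic to $X$ rel boundary, and the cocores of the attached $2$-handles are the desired slice disks $D \subseteq X$. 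The complement $X \smallsetminus D$ retracts onto $W$, so $\pi_1(X \smallsetminus D)$ is the free group on the meridians of $L$, confirming that $L$ is freely slice.
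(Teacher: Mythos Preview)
Your overall strategy matches the paper's: build a $4$-manifold with boundary $M_L$ and free $\pi_1$ on meridians, represent $H_2$ by a $\pi_1$-null immersion of transverse pairs with algebraically trivial intersections, invoke \cite[Theorem~6.1]{Freedman-Quinn:1990-1}, surger, and reverse the $0$-surgery. However, the paper's actual proof in Appendix~\ref{appendix} does \emph{not} use the $V_F$ construction from the introduction. Instead it gives an explicit Kirby-diagrammatic description: to $X$ it attaches $1$-handles along curves $\beta_i$ and $2$-handles along $a_i$, $\gamma_i$, $\delta_i$, where $\beta_i\cup\gamma_i$ is a Bing double of $b_i$. The generating spheres are then built concretely: $\Sigma_{2i-1}$ is $\Delta_i^+$ capped with the core of the $a_i$-handle, while $\Sigma_{2i}$ is assembled from $\Delta_{g+i}^+$, $\Delta_i$, a planar surface $P_i$, and the core of the $\gamma_i$-handle. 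Your sphere construction is vaguer and does not obviously use both $\Delta_i$ and $\Delta_{g+i}^+$ in a single sphere, which is how the paper (following \cite{Cha-Kim-Powell:2020-1}) arranges the transverse-pair structure. Finally, note a small but genuine slip: Freedman--Quinn's theorem does not produce embedded spheres in your original $V_F$; rather (as stated in the paper's Theorem~\ref{thm:FQ}) it produces a \emph{new} $4$-manifold $W'$, $s$-cobordant to the old one rel boundary, containing the embedded configuration. The surgery is then performed in $W'$.
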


The proof of Theorem~\ref{thm:CKP} is essentially the same as Cha-Kim-Powell's proof of \cite[Theorem A]{Cha-Kim-Powell:2020-1}. A sketch of this argument, together with the minor adjustments required to confirm their argument transfers over to general homology spheres, are found in Appendix~\ref{appendix}. The current section will proceed assuming that Theorem~\ref{thm:CKP} is proved.

Next we show how a link in a 3-manifold can be modified by a homotopy to a boundary link with a good disky basis.   

\begin{proposition}\label{prop:get immersed disks +}
Let $W$ be a simply connected 4-manifold, $Y$ be a connected 3-manifold in $\bdry W$, and $L=L_1\cup\dots\cup L_n$ be a link in $Y$ whose components are nullhomologous in $Y$.  Then $L$ is freely homotopic to a link $J = J_1\cup\dots \cup J_n$ so that there exists a collection of disjoint immersed disks $\{D_i, D_i^+\}_{1\leq i \leq n}$ in $W$ such that $\bdry D_i = J_i$ and $\bdry  {D^+_i} = (J_i)^+$, where $(J_i)^+$ is a zero linking parallel copy of $J_i$.  \end{proposition}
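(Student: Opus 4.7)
The plan is to adapt the proof of Proposition~\ref{prop:get immersed disks} to the doubled link $L\cup L^+$ in $Y$, where $L^+$ denotes the zero-linking parallel copy of $L$, by choosing all relative Whitney disks in parallel families, so that the parallel structure is preserved throughout the homotopy and the resulting link has the form $J\cup J^+$.

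Since $W$ is simply connected, $\pi_1(Y)\to\pi_1(W)$ is trivially surjective and every loop in $W$ is nullhomotopic. Thus each component of $L$ and each component of $L^+$ (the latter is well-defined because each $L_i$ is nullhomologous in $Y$) bounds an immersed disk in $W$. I would choose transverse immersed disks $D_i\subseteq W$ with $\bdry D_i = L_i$, and adjust each $D_i$ by boundary twists (introducing self-intersections if necessary) so that its normal framing matches the Seifert framing of $L_i$. Letting $D_i^+$ be a small parallel pushoff of $D_i$ via this matching framing then yields $\bdry D_i^+ = L_i^+$.

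With this setup, intersections among the $2n$ disks $\{D_i\}\cup\{D_i^+\}$ come in structured families: for each $p\in D_i\cap D_j$ with $i\neq j$, the parallel pushoff produces three corresponding parallel intersections $p'\in D_i^+\cap D_j^+$, $p''\in D_i\cap D_j^+$, and $p'''\in D_i^+\cap D_j$ all near $p$, and each self-intersection of $D_i$ produces a pair of intersections in $D_i\cap D_i^+$. I would adapt the argument of Proposition~\ref{prop:get immersed disks} to this $2n$-link with initial disks $\{D_i\}\cup\{D_i^+\}$, choosing all relative Whitney disks in parallel families --- for each quadruple of parallel intersections, four parallel relative Whitney disks, and for each pair of $D_i\cap D_i^+$ intersections coming from a self-intersection of $D_i$, a matched pair of relative Whitney disks localized in a tubular neighborhood of $D_i$.

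The main technical obstacle will be verifying that the four relative Whitney tricks associated to one quadruple, performed simultaneously with parallel Whitney disks, together realize a single crossing change between $L_i$ and $L_j$ in $Y$ together with the induced parallel motion of $L_i^+$ and $L_j^+$; this is what preserves the parallel structure throughout the homotopy. Iterating this paired procedure across all quadruples, together with the analogous paired procedure localized to tubular neighborhoods of the $D_i$'s to handle the $D_i\cap D_i^+$ intersections, will produce a link $J$ freely homotopic to $L$ such that $J\cup J^+$ bounds the required collection of $2n$ pairwise disjoint immersed disks in $W$.
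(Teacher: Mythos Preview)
Your approach is genuinely different from the paper's, and the gap you yourself identify as ``the main technical obstacle'' is in fact the heart of the matter --- and it is not clear that it can be overcome along the lines you suggest.

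The paper does not attempt to separate parallel families of disks directly. Instead it passes to the level of Seifert surfaces. Since each $L_i$ is nullhomologous in $Y$, it is freely homotopic to a product of commutators $\prod_j[\alpha_{i,j},\beta_{i,j}]$. The paper applies Proposition~\ref{prop:get immersed disks} once --- with no parallel constraint --- to the link $\mathcal{L}=\bigcup_{i,j}a_{i,j}\cup b_{i,j}$ formed from representatives of the $\alpha$'s and $\beta$'s, obtaining disjoint immersed disks $\Delta_{a_{i,j}},\Delta_{b_{i,j}}$. It then builds disjoint Seifert surfaces $F_i$ for knots $J_i\simeq L_i$ so that the $a_{i,j}$ sit on $F_i$ and the $b_{i,j}$ sit on the pushoff $F_i^+$ (adjusting band twists for framings). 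Finally, $D_i$ is obtained by ambient surgery on $F_i$ along the $\Delta_{a_{i,j}}$, and $D_i^+$ by ambient surgery on $F_i^+$ along the $\Delta_{b_{i,j}}$. The disjointness of $\{D_i,D_i^+\}$ then follows for free from the disjointness of the $\Delta$'s and of the $F_i,F_i^+$.

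Your plan, by contrast, runs the full recursive machinery of Proposition~\ref{prop:get immersed disks} on the $2n$-component link $L\cup L^+$ while trying to preserve the constraint that the second $n$ components remain a zero-linking parallel of the first $n$. Two real difficulties arise. First, the procedure of Proposition~\ref{prop:get immersed disks} is not a single layer of relative Whitney tricks: it builds a tree of higher-order relative Whitney disks, interspersed with finger moves that add self-intersections to the $D_k$. Every such finger move on $D_k$ creates new points in $D_k\cap D_k^+$, so the bookkeeping for ``parallel families'' must be maintained through the entire tree, not just at the bottom layer. You have not indicated how to do this. Second, and more seriously, your treatment of the $D_i\cap D_i^+$ intersections via ``a matched pair of relative Whitney disks'' is problematic: each relative Whitney trick on a point of $D_i\cap D_i^+$ effects a crossing change between $L_i$ and $L_i^+$ on the boundary, and two such crossing changes do not in general return $L_i^+$ to being a zero-framing parallel of $L_i$ --- even when the linking number is restored, the pair $(L_i,L_i^+)$ can become, say, a Whitehead-type link. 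Establishing that a carefully chosen matched pair realizes exactly the boundary effect of a self-crossing of $L_i$ with its parallel carried along is a nontrivial claim that would require its own argument.

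In short, the paper sidesteps all of this by moving the disk-separation problem up to the basis curves of a Seifert surface, where there is no parallel constraint to maintain; the parallel structure of $\{D_i,D_i^+\}$ then comes from the Seifert surface and its pushoff, not from the separation procedure.
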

\begin{proof}
Pick $n$ distinct points $p_1,\dots, p_n$ in $Y$.  As each $L_i$ is nullhomologous in $Y$, we see that $L_i$ is freely homotopic to a product of commutators $ \prod_{j=1}^{g_i} [\alpha_{i,j}, \beta_{i,j}]$ for some $\alpha_{i,j}, \beta_{i,j}\in \pi_1(Y,p_i).$ By Proposition~\ref{prop:get immersed disks}, there is a link 
$$\mathcal{L}:=\Cup_{i=1}^n\Cup_{j=1}^{g_i} a_{i,j}\cup b_{i,j}$$
such that for each $i,j$, the components $a_{i,j}$ and $b_{i,j}$ are freely homotopic to $\alpha_{i,j}$ and $\beta_{i,j}$ respectively, and there are disjoint immersed disks 
$$\left\{\Delta_{a_{i,j}}, \Delta_{b_{i,j}} \mid 1\leq i \leq n, 1\leq j \leq g_i\right\}$$ in $W$ bounded by $\mathcal{L}$.  Thus, for each $i,j$, there exist embedded arcs $c_{i,j}$ and $d_{i,j}$ from $p_i$ to a point on $a_{i,j}$ and $b_{i,j}$ respectively, so that, as elements in $\pi_1(Y,p_i)$
 $$\alpha_{i,j} = c_{i,j}*a_{i,j}*c_{i,j}^{-1} \quad\text{and}\quad \beta_{i,j} = d_{i,j}*b_{i,j}*d_{i,j}^{-1}.$$  As in Figure~\ref{fig:commutator to surface}, we construct a collection of disjoint surfaces $F_1,\dots, F_n$ in $Y$ so that for each $i$,

\begin{itemize}[leftmargin=0.9cm]\setlength\itemsep{0em}
\item $F_i$ is a Seifert surface for some knot, denoted by $J_i$, which is freely homotopic to $L_i$,
\item $F_i$ has a symplectic basis $\{A_{i,j}, B_{i,j}\}_{1\leq j \leq g_i}$ so that 
as curves in $Y$, $A_{i,j} = a_{i,j}$ and $B_{i,j}' = b_{i,j}$ for each $j$. Here, the curve $B_{i,j}'$ is the positive pushoff of $B_{i,j}$ with respect to $F_i$.
\item The framing of $a_{i,j} = A_{i,j}$ induced by $F_i$ extends over $\Delta_{a_{i,j}}$   
and the framing of $b_{i,j} = B_{i,j}'$ induced by $F_i^+$, the result of pushing $F_i$ off itself, extends over $\Delta_{b_{i,j}}$.  (This can be arranged by adding twists to the bands of $F_i$.)
\end{itemize}
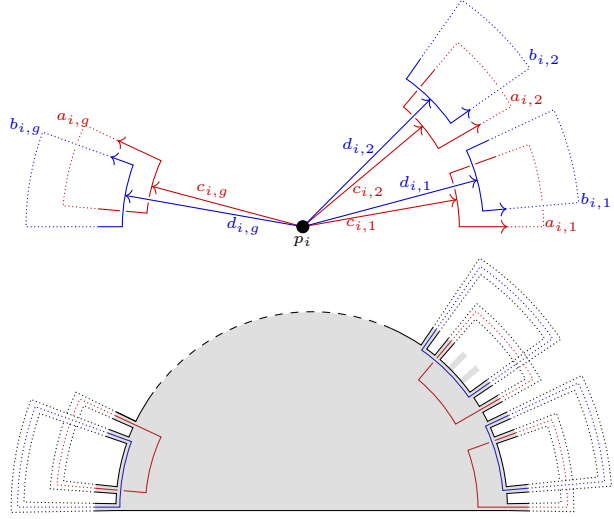
\begin{figure}
\begin{tikzpicture}[scale=1.6]
    \draw[diagramred,<-] (1.7,0)--(1.3,0) arc (0:20:1.3)--({1.7*cos(20)},{1.7*sin(20)});
    \draw[diagramred, densely dotted] ({1.7*cos(20)},{1.7*sin(20)})--({2*cos(20)},{2*sin(20)}) arc (20:6:2);
    \draw[diagramred, densely dotted] ({2*cos(4)},{2*sin(4)}) arc (4:0:2)--(1.7,0);
    \draw[draw=white, fill=white] ({1.5*cos(20)},{1.5*sin(20)}) circle (.02);
    \draw[draw=white, fill=white] ({1.3*cos(15)},{1.3*sin(15)}) circle (.02);
    \draw[draw=white, fill=white] ({2*cos(5)},{2*sin(5)}) circle (.02);
    \draw[diagramblue,<-] ({1.7*cos(5)},{1.7*sin(5)})--({1.5*cos(5)},{1.5*sin(5)}) arc (5:25:1.5)--({1.7*cos(25)},{1.7*sin(25)});
    \draw[diagramblue, densely dotted] ({1.7*cos(25)},{1.7*sin(25)})--({2.3*cos(25)},{2.3*sin(25)}) arc (25:5:2.3)--({1.7*cos(5)},{1.7*sin(5)});    
    \draw[diagramred, ->](0,0)--({1.3*cos(10)},{1.3*sin(10)});
    \draw[diagramblue, ->](0,0)--({1.5*cos(15)},{1.5*sin(15)});

    \draw[diagramred,<-] ({1.7*cos(30)},{1.7*sin(30)})--({1.3*cos(30)},{1.3*sin(30)}) arc (30:50:1.3)--({1.7*cos(50)},{1.7*sin(50)});
    \draw[diagramred, densely dotted] ({1.7*cos(20+30)},{1.7*sin(20+30)})--({2*cos(20+30)},{2*sin(20+30)}) arc (20+30:6+30:2);
    \draw[diagramred, densely dotted] ({2*cos(4+30)},{2*sin(4+30)}) arc (4+30:0+30:2)--({1.7*cos(30)},{1.7*sin(30)});
    \draw[draw=white, fill=white] ({1.5*cos(50)},{1.5*sin(50)}) circle (.02);
    \draw[draw=white, fill=white] ({1.3*cos(45)},{1.3*sin(45)}) circle (.02);
    \draw[draw=white, fill=white] ({2*cos(30+5)},{2*sin(30+5)}) circle (.02);
    \draw[diagramblue,<-] ({1.7*cos(35)},{1.7*sin(35)})--({1.5*cos(35)},{1.5*sin(35)}) arc (35:55:1.5)--({1.7*cos(55)},{1.7*sin(55)});
    \draw[diagramblue, densely dotted] ({1.7*cos(55)},{1.7*sin(55)})--({2.3*cos(55)},{2.3*sin(55)}) arc (55:35:2.3)--({1.7*cos(35)},{1.7*sin(35)});    
    \draw[diagramred, ->](0,0)--({1.3*cos(40)},{1.3*sin(40)});
    \draw[diagramblue, ->](0,0)--({1.5*cos(45)},{1.5*sin(45)});


    \draw[diagramred,<-] ({1.7*cos(155)},{1.7*sin(155)})--({1.3*cos(155)},{1.3*sin(155)}) arc (155:175:1.3)--({1.7*cos(175)},{1.7*sin(175)});
    \draw[diagramred, densely dotted] ({1.7*cos(20+155)},{1.7*sin(20+155)})--({2*cos(20+155)},{2*sin(20+155)}) arc (20+155:6+155:2);
    \draw[diagramred, densely dotted] ({2*cos(4+155)},{2*sin(4+155)}) arc (4+155:0+155:2)--({1.7*cos(155)},{1.7*sin(155)});
    \draw[draw=white, fill=white] ({1.5*cos(20+155)},{1.5*sin(20+155)}) circle (.02);
    \draw[draw=white, fill=white] ({1.3*cos(45+125)},{1.3*sin(45+125)}) circle (.02);
    \draw[draw=white, fill=white] ({2*cos(160)},{2*sin(160)}) circle (.02);
    \draw[diagramblue,<-] ({1.7*cos(160)},{1.7*sin(160)})--({1.5*cos(160)},{1.5*sin(160)}) arc (160:180:1.5)--({1.7*cos(180)},{1.7*sin(180)});
    \draw[diagramblue, densely dotted] ({1.7*cos(180)},{1.7*sin(180)})--({2.3*cos(180)},{2.3*sin(180)}) arc (180:160:2.3)--({1.7*cos(160)},{1.7*sin(160)});    
    \draw[diagramred, ->](0,0)--({1.3*cos(165)},{1.3*sin(165)});
    \draw[diagramblue, ->](0,0)--({1.5*cos(170)},{1.5*sin(170)});

\draw [fill=black] (0,0) circle (.05);
\node[below] at (0,0) {\tiny{$ p_i$}};
\node[diagramred] at ({0.5*cos(10)},{0.5*sin(10)-.08}) {\tiny{$c_{i,1}$}};
\node[diagramblue] at ({1*cos(15)-.1*sin(15)},{1*sin(15)+.1*cos(15}) {\tiny{$d_{i,1}$}};
\node[diagramred] at ({0.6*cos(40)+.13*sin(40)},{0.6*sin(40)-.13*cos(40)}) {\tiny{$c_{i,2}$}};
\node[diagramblue] at ({0.8*cos(45)-.14*sin(45)},{0.8*sin(45)+.14*cos(45)}) {\tiny{$d_{i,2}$}};
\node[diagramblue] at ({-0.5*cos(10)},{0.5*sin(10)-.08}) {\tiny{$d_{i,g}$}};
\node[diagramred] at ({-0.8*cos(15)+.09*sin(15)},{0.8*sin(15)+.09}) {\tiny{$c_{i,g}$}};

\node[diagramred] at ({2.15},{0}) {\tiny{$a_{i,1}$}};
\node[diagramblue] at ({2.45*cos(5)},{2.45*sin(5)}) {\tiny{$b_{i,1}$}};

\node[diagramred] at ({2.15*cos(30)},{2.1*sin(30)}) {\tiny{$a_{i,2}$}};
\node[diagramblue] at ({2.45*cos(35)},{2.45*sin(35)}) {\tiny{$b_{i,2}$}};

\node[diagramred] at ({2.1*cos(155)},{2.1*sin(155)}) {\tiny{$a_{i,g}$}};
\node[diagramblue] at ({2.45*cos(160)},{2.45*sin(160)}) {\tiny{$b_{i,g}$}};
\end{tikzpicture}
\begin{tikzpicture}[scale=1.7]
    \draw[diagramred] (1.7,0)--(1.3,0) arc (0:20:1.3)--({1.7*cos(20)},{1.7*sin(20)});
    \draw[diagramred, densely dotted] ({1.7*cos(20)},{1.7*sin(20)})--({2*cos(20)},{2*sin(20)}) arc (20:6:2);
    \draw[diagramred, densely dotted] ({2*cos(4)},{2*sin(4)}) arc (4:0:2)--(1.7,0);
    \draw[draw=white, fill=white] ({1.5*cos(20)},{1.5*sin(20)}) circle (.02);
    \draw[diagramblue] ({1.7*cos(5)},{1.7*sin(5)})--({1.5*cos(5)},{1.5*sin(5)}) arc (5:25:1.5)--({1.7*cos(25)},{1.7*sin(25)});
    \draw[diagramblue, densely dotted] ({1.7*cos(25)},{1.7*sin(25)})--({2.3*cos(25)},{2.3*sin(25)}) arc (25:5:2.3)--({1.7*cos(5)},{1.7*sin(5)});    

    \draw[diagramred] ({1.7*cos(30)},{1.7*sin(30)})--({1.3*cos(30)},{1.3*sin(30)}) arc (30:50:1.3)--({1.7*cos(50)},{1.7*sin(50)});
    \draw[diagramred, densely dotted] ({1.7*cos(20+30)},{1.7*sin(20+30)})--({2*cos(20+30)},{2*sin(20+30)}) arc (20+30:6+30:2);
    \draw[diagramred, densely dotted] ({2*cos(4+30)},{2*sin(4+30)}) arc (4+30:0+30:2)--({1.7*cos(30)},{1.7*sin(30)});
    \draw[draw=white, fill=white] ({1.5*cos(50)},{1.5*sin(50)}) circle (.02);
    \draw[diagramblue] ({1.7*cos(35)},{1.7*sin(35)})--({1.5*cos(35)},{1.5*sin(35)}) arc (35:55:1.5)--({1.7*cos(55)},{1.7*sin(55)});
    \draw[diagramblue, densely dotted] ({1.7*cos(55)},{1.7*sin(55)})--({2.3*cos(55)},{2.3*sin(55)}) arc (55:35:2.3)--({1.7*cos(35)},{1.7*sin(35)});    

\draw[diagramred] ({1.7*cos(155)},{1.7*sin(155)})--({1.3*cos(155)},{1.3*sin(155)}) arc (155:175:1.3)--({1.7*cos(175)},{1.7*sin(175)});
    \draw[diagramred, densely dotted] ({1.7*cos(20+155)},{1.7*sin(20+155)})--({2*cos(20+155)},{2*sin(20+155)}) arc (20+155:6+155:2);
    \draw[diagramred, densely dotted] ({2*cos(4+155)},{2*sin(4+155)}) arc (4+155:0+155:2)--({1.7*cos(155)},{1.7*sin(155)});
    \draw[draw=white, fill=white] ({1.5*cos(20+155)},{1.5*sin(20+155)}) circle (.02);
    \draw[diagramblue] ({1.7*cos(160)},{1.7*sin(160)})--({1.5*cos(160)},{1.5*sin(160)}) arc (160:180:1.5)--({1.7*cos(180)},{1.7*sin(180)});
    \draw[diagramblue, densely dotted] ({1.7*cos(180)},{1.7*sin(180)})--({2.3*cos(180)},{2.3*sin(180)}) arc (180:160:2.3)--({1.7*cos(160)},{1.7*sin(160)});

    \draw[white, fill=gray, opacity=.25]  
    ({1.7*cos(181)},{1.7*sin(181)})--({1.53*cos(181)},{1.53*sin(181)})--
    ({1.53*cos(-1)},{1.53*sin(-1)})--({1.7*cos(-1)},{1.7*sin(-1)})--({1.7*cos(1)},{1.7*sin(1)})--({1.53*cos(1)},{1.53*sin(1)}) arc(1:4:1.53)
    --({1.7*cos(4)},{1.7*sin(4)})--({1.7*cos(6)},{1.7*sin(6)})--({1.53*cos(6)},{1.53*sin(6)}) arc (6:19:1.53)
    --({1.7*cos(19)},{1.7*sin(19)})--({1.7*cos(21)},{1.7*sin(21)})--({1.53*cos(21)},{1.53*sin(21)}) arc (21:24:1.53)
    --({1.7*cos(24)},{1.7*sin(24)})--({1.7*cos(26)},{1.7*sin(26)})--({1.53*cos(26)},{1.53*sin(26)}) arc (26:29:1.53)
    --({1.7*cos(29)},{1.7*sin(29)})--({1.7*cos(31)},{1.7*sin(31)})--({1.53*cos(31)},{1.53*sin(31)})arc (31:34:1.53)
    --({1.7*cos(34)},{1.7*sin(34)})--({1.7*cos(36)},{1.7*sin(36)})--({1.53*cos(36)},{1.53*sin(36)})arc (36:39:1.53)
    --({1.7*cos(39)},{1.7*sin(39)})--({1.7*cos(41)},{1.7*sin(41)})--({1.53*cos(41)},{1.53*sin(41)})arc (41:44:1.53)
    --({1.7*cos(44)},{1.7*sin(44)})--({1.7*cos(46)},{1.7*sin(46)})--({1.53*cos(46)},{1.53*sin(46)})arc (46:49:1.53)
    --({1.7*cos(49)},{1.7*sin(49)})--({1.7*cos(51)},{1.7*sin(51)})--({1.53*cos(51)},{1.53*sin(51)})arc (51:54:1.53)
    --({1.7*cos(54)},{1.7*sin(54)})--({1.7*cos(56)},{1.7*sin(56)})--({1.53*cos(56)},{1.53*sin(56)})arc (56:154:1.53)
    --({1.7*cos(154)},{1.7*sin(154)})--({1.7*cos(156)},{1.7*sin(156)})--({1.53*cos(156)},{1.53*sin(156)})arc (156:159:1.53)
    --({1.7*cos(159)},{1.7*sin(159)})--({1.7*cos(161)},{1.7*sin(161)})--({1.53*cos(161)},{1.53*sin(161)})arc (161:174:1.53)
    --({1.7*cos(174)},{1.7*sin(174)})--({1.7*cos(176)},{1.7*sin(176)})--({1.53*cos(176)},{1.53*sin(176)})arc (176:179:1.53)
    --({1.7*cos(179)},{1.7*sin(179)})--({1.7*cos(181)},{1.7*sin(181)})--({1.53*cos(181)},{1.53*sin(181)});

    \draw[] ({1.7*cos(-1)},{1.7*sin(-1)}) -- ({1.53*cos(-1)},{1.53*sin(-1)});
    \draw[densely dotted] ({1.7*cos(-1)},{1.7*sin(-1)})  -- ({2.05*cos(-1)},{2.05*sin(-1)}) arc (-1:4:2.05);
\draw[densely dotted] ({2.05*cos(6)},{2.05*sin(6)}) arc (6:21:2.05)--({1.7*cos(21)},{1.7*sin(21)});
    \draw[] ({1.7*cos(21)},{1.7*sin(21)}) -- ({1.53*cos(21)},{1.53*sin(21)}) arc (21:24:1.53) -- ({1.7*cos(24)},{1.7*sin(24)});
    \draw[densely dotted] ({1.7*cos(24)},{1.7*sin(24)})  -- ({2.25*cos(24)},{2.25*sin(24)}) arc (24:6:2.25)--({1.7*cos(6)},{1.7*sin(6)});
    \draw[] ({1.7*cos(19)},{1.7*sin(19)}) -- ({1.53*cos(19)},{1.53*sin(19)}) arc(19:6:1.53) -- ({1.7*cos(6)},{1.7*sin(6)});
    \draw[densely dotted] ({1.7*cos(19)},{1.7*sin(19)})  -- ({1.95*cos(19)},{1.95*sin(19)}) arc (19:6:1.95);
    \draw[densely dotted]  ({1.95*cos(4)},{1.95*sin(4)}) arc (4:1:1.95)--({1.7*cos(1)},{1.7*sin(1)});
    \draw[] ({1.7*cos(4)},{1.7*sin(4)}) -- ({1.53*cos(4)},{1.53*sin(4)}) arc(4:1:1.53) -- ({1.7*cos(1)},{1.7*sin(1)});
    \draw[densely dotted] ({1.7*cos(4)},{1.7*sin(4)})--({2.35*cos(4)},{2.35*sin(4)}) arc (4:26:2.35)--({1.7*cos(26)},{1.7*sin(26)});

    \draw[] ({1.7*cos(26)},{1.7*sin(26)}) -- ({1.53*cos(26)},{1.53*sin(26)}) arc (26:29:1.53) --({1.7*cos(30-1)},{1.7*sin(30-1)});
    
    \draw[densely dotted] ({1.7*cos(30-1)},{1.7*sin(30-1)})  -- ({2.05*cos(30-1)},{2.05*sin(30-1)}) arc (30-1:30+21:2.05)--({2.05*cos(30+21)},{2.05*sin(30+21)})--({1.7*cos(30+21)},{1.7*sin(30+21)});
    \draw[] ({1.7*cos(30+21)},{1.7*sin(30+21)}) -- ({1.53*cos(30+21)},{1.53*sin(30+21)}) arc (30+21:30+24:1.53) -- ({1.7*cos(30+24)},{1.7*sin(30+24)});
\draw[densely dotted] ({1.7*cos(30+24)},{1.7*sin(30+24)})  -- ({2.25*cos(30+24)},{2.25*sin(30+24)}) arc (30+24:30+6:2.25)--({1.7*cos(30+6)},{1.7*sin(30+6)});
    \draw[] ({1.7*cos(30+19)},{1.7*sin(30+19)}) -- ({1.53*cos(30+19)},{1.53*sin(30+19)}) arc(30+19:30+6:1.53) -- ({1.7*cos(30+6)},{1.7*sin(30+6)});
    \draw[densely dotted] ({1.7*cos(30+19)},{1.7*sin(30+19)})  -- ({1.95*cos(30+19)},{1.95*sin(30+19)}) arc (30+19:30+1:1.95)--({1.7*cos(30+1)},{1.7*sin(30+1)});
    \draw[] ({1.7*cos(30+4)},{1.7*sin(30+4)}) -- ({1.53*cos(30+4)},{1.53*sin(30+4)}) arc(30+4:30+1:1.53) -- ({1.7*cos(30+1)},{1.7*sin(30+1)});
    \draw[densely dotted] ({1.7*cos(30+4)},{1.7*sin(30+4)})--({2.35*cos(30+4)},{2.35*sin(30+4)}) arc (30+4:30+26:2.35)--({1.7*cos(30+26)},{1.7*sin(30+26)});
    \draw[] ({1.7*cos(30+26)},{1.7*sin(30+26)}) -- ({1.53*cos(30+26)},{1.53*sin(30+26)}) arc (56:66:1.53);
    
    \draw[dashed] ({1.53*cos(66)},{1.53*sin(66)}) arc (66:180-36:1.53);
    \draw[] ({1.53*cos(180-36)},{1.53*sin(180-36)}) arc (180-36:180-26:1.53)--({1.7*cos(180-26)},{1.7*sin(180-26)});

    \draw[] ({1.7*cos(180+1)},{1.7*sin(180+1)}) -- ({1.53*cos(180+1)},{1.53*sin(180+1)});
    \draw[densely dotted] ({1.7*cos(180+1)},{1.7*sin(180+1)})  -- ({2.35*cos(180+1)},{2.35*sin(180+1)}) arc (180+1:180-21:2.35)--({2.35*cos(180-21)},{2.35*sin(180-21)})--({1.7*cos(180-21)},{1.7*sin(180-21)});
    \draw[] ({1.7*cos(180-21)},{1.7*sin(180-21)}) -- ({1.53*cos(180-21)},{1.53*sin(180-21)}) arc (180-21:180-24:1.53) -- ({1.7*cos(180-24)},{1.7*sin(180-24)});
\draw[densely dotted] ({1.7*cos(180-24)},{1.7*sin(180-24)})  -- ({1.95*cos(180-24)},{1.95*sin(180-24)}) arc (180-24:180-21:1.95);
\draw[densely dotted] ({1.95*cos(180-19)},{1.95*sin(180-19)}) arc (180-19:180-6:1.95)--({1.7*cos(180-6)},{1.7*sin(180-6)});
    \draw[] ({1.7*cos(180-19)},{1.7*sin(180-19)}) -- ({1.53*cos(180-19)},{1.53*sin(180-19)}) arc(180-19:180-6:1.53) -- ({1.7*cos(180-6)},{1.7*sin(180-6)});
    \draw[densely dotted] ({1.7*cos(180-19)},{1.7*sin(180-19)})  -- ({2.25*cos(180-19)},{2.25*sin(180-19)}) arc (180-19:180-1:2.25)--({1.7*cos(180-1)},{1.7*sin(180-1)});
    \draw[] ({1.7*cos(180-4)},{1.7*sin(180-4)}) -- ({1.53*cos(180-4)},{1.53*sin(180-4)}) arc(180-4:180-1:1.53) -- ({1.7*cos(180-1)},{1.7*sin(180-1)});
    \draw[densely dotted] ({1.7*cos(180-4)},{1.7*sin(180-4)})--({2.05*cos(180-4)},{2.05*sin(180-4)}) arc (180-4:180-19:2.05);
    \draw[densely dotted] ({2.05*cos(180-21)},{2.05*sin(180-21)}) arc (180-21:180-26:2.05)--({1.7*cos(180-26)},{1.7*sin(180-26)});
    \draw[] ({1.7*cos(180-26)},{1.7*sin(180-26)}) -- ({1.53*cos(180-26)},{1.53*sin(180-26)});
    
    \draw ({1.53*cos(180+1)},{1.53*sin(180+1)})--({1.53*cos(-1)},{1.53*sin(-1)});
            
\end{tikzpicture}

\caption{Above: The elements $\alpha_{i,1}, \beta_{i,1},\dots, \alpha_{i,g}, \beta_{i,g}\in \pi_1(Y, p_i)$ are given by conjugating the components of a link $a_{i,1}, b_{i,1},\dots, a_{i,g}, b_{i,g}$ whose components bound disjoint immersed disks in a contractible 4-manifold by embedded arcs $c_{i,1}, d_{i,1}, \dots c_{i,g}, d_{i,g}$.  Below: A knot $L_i$ which is freely homotopic to $\prod_{j=1}^{g_i} [\alpha_{i,j}, \beta_{i,j}]$ which bounds a Seifert surface $S_i$ so that $a_{i,1},\dots, a_{i,g}$ sit on $S_i$ and $b_{i,1},\dots, b_{i,g}$ sit on the normal pushoff $S_i'$.} 
\label{fig:commutator to surface}
\end{figure}

Finally, for each $i$, the disk $D_i$ required by the theorem is produced by starting with $F_{i}$ and performing ambient surgery using the disks $\{ \Delta_{a_{i,j}}\}_{1\leq j \leq g_i}$. The disk $D_i^+$ is produced similarly by starting with $F_i^+$ and
  performing ambient surgery using $\{ \Delta_{b_{i,j}}\}_{1\leq j \leq g_i}$.  This completes the proof.
  \end{proof}

We are ready to prove Theorem~\ref{thm:main}, which we restate here.

\begin{reptheorem}{thm:main}
Every link in a homology sphere is freely homotopic to a freely slice link.
\end{reptheorem}

\begin{proof}

Let $L$ be a link in a homology sphere $Y$, and $X$ be the unique contractible 4-manifold bounded by $Y$.  As in the proof of Proposition~\ref{prop:get immersed disks +}, pick $n$ distinct points $p_1,\dots, p_n$ in $Y$.  Since $Y$ is a homology sphere, each $L_i$ is freely homotopic to a product of commutators $ \prod_{j=1}^{g_i} [\alpha_{i,j}, \beta_{i,j}]$ for some $\alpha_{i,j}, \beta_{i,j}\in \pi_1(Y,p_i).$ So far the proof is similar to the proof of Proposition~\ref{prop:get immersed disks +}, but now we replace the reference to Proposition~\ref{prop:get immersed disks} with the reference to Proposition~\ref{prop:get immersed disks +}, so that we obtain a link $$\mathcal{L}:= \Cup_{i=1}^n\Cup_{j=1}^{g_i} a_{i,j}\cup b_{i,j}$$ such that for each $i,j$, the components $a_{i,j}$ are $b_{i,j}$ are freely homotopic to $\alpha_{i,j}$ and $\beta_{i,j}$ respectively. Moreover, there are disjoint immersed disks 
$$\left\{\Delta_{a_{i,j}}, \Delta_{b_{i,j}},\Delta_{a_{i,j}}^+, \Delta_{b_{i,j}}^+ \mid 1\leq i \leq n, 1\leq j \leq g_i\right\}$$
in $X$ bounded by $\mathcal{L} \cup \mathcal{L}^+$ where $\mathcal{L}^+$ is a zero linking parallel copy of $\mathcal{L}.$

As in the proof of Proposition~\ref{prop:get immersed disks +}, we obtain a boundary link $J=J_1\cup\dots\cup J_n$ which is freely homotopic to $L$,  a boundary link Seifert surface $F =F_1 \cup \cdots \cup F_n$ for $J$, and a symplectic basis $\{A_{i,j}, B_{i,j}\}_{1\leq i \leq n, 1\leq j \leq g_i}$ on $F$ such that $A_{i,j} = a_{i,j}$ and $B_{i,j}' = b_{i,j}$ for each $i,j$. Again, here $B_{i,j}'$ is the positive pushoff of $B_{i,j}$ with respect to $F_i$, and we may assume that the Seifert framings on $A_{i,j}$ and $B_{i,j}$ are the zero framings.

We claim that $J$ satisfies all of the assumptions of Theorem~\ref{thm:CKP}.  Since linking numbers can be computed in terms of intersections of bounded disks, we see that the Seifert matrix for $F$ with respect to the basis $\{A_{i,j}, B_{i,j}\}_{1\leq i \leq n, 1\leq j \leq g_i}$ has the form\setlength\dashlinedash{0.2pt}
\setlength\dashlinegap{1.5pt}
\setlength\arrayrulewidth{0.3pt}
\[
\left[
\begin{array}{c:c:c:c:c}
\begin{array}{cc}0&\epsilon_1
\\1-\epsilon_1&0\end{array} &
\begin{array}{cc}0&*
\\0&*\end{array} &
\begin{array}{cc}0&*
\\0&*\end{array} 
&\dots&
\begin{array}{cc}0&*
\\0&*\end{array} 
\\
\hdashline
\begin{array}{cc}0&0
\\**&*\end{array} &
\begin{array}{cc}0&\epsilon_2
\\1-\epsilon_2&0\end{array} &
\begin{array}{cc}0&*
\\0&*\end{array}
&\dots&
\begin{array}{cc}0&*
\\0&*\end{array} 
 \\
\hdashline
\begin{array}{cc}0&0
\\**&*\end{array} &
\begin{array}{cc}0&0
\\**&*\end{array} &
\begin{array}{cc}0&\epsilon_3
\\1-\epsilon_3&0\end{array}
&\dots&
\begin{array}{cc}0&*
\\0&*\end{array} 
 \\
\hdashline
\vdots&\vdots&\vdots&\ddots&\vdots\\
\hdashline
\begin{array}{cc}0&0
\\**&*\end{array} &
\begin{array}{cc}0&0
\\**&*\end{array} &
\begin{array}{cc}0&0
\\**&*\end{array} 
&\dots&
\begin{array}{cc}0&\epsilon_g
\\1-\epsilon_g&0
\end{array}
\end{array}
\right]
\]
with each $*$ and each $\epsilon_i$ equal to zero. We have now produced a good basis.  The existence of the disjoint disks 
$\{\Delta_{a_{i,j}}, \Delta_{b_{i,j}}, \Delta_{b_{i,j}}^+\}_{1\leq i \leq n, 1\leq j \leq g_i}$ implies that $\{a_{i,j}, b_{i,j}\}_{1\leq i \leq n, 1\leq j \leq g_i}$ is a good disky basis. Thus, by Theorem~\ref{thm:CKP} we conclude that $J$ is freely slice.\end{proof}

\section{Whitney tower concordance and links in homology spheres}\label{sect: towers}

In this section, we will explain how to use the relative Whitney trick to construct Whitney towers.  We will begin by recalling the definition of a (non-relative) Whitney disk, and Whitney tower; see also~\cite[Section 2.1]{MR3770161}, for example.

Let $S$ be an immersed oriented surface in a 4-manifold $W$ with double points $p$ and $q$ of opposite signs.  Let $\alpha_1$ and $\alpha_2$ be embedded arcs in $S$, running from $p$ to $q$ and $q$ to $p$ respectively. Assume that $\alpha_1$ and $\alpha_2$ meet the double point set of $S$ only at $\{p,q\}$, that $\alpha_1\cap\alpha_2=\{p,q\}$, and that near both $p$ and $q$ the arcs are in different local sheets. Let $\Delta$ be an immersed disk in $W$ bounded by the circle $\gamma:=\alpha_1\ast\alpha_2$, and with interior transverse to $S$. The normal bundle of $\Delta$ has a unique trivialisation; its restriction to $\gamma$ determines a choice of framing for the trivial $2$-plane bundle over $\gamma$. At each point on $\alpha_i$ let $v_i$ be the tangent direction in $S$ normal to $\alpha_i$, and let $u_i$ be a common normal to both $\Delta$ and $S$, chosen to be a section of the normal bundle of $\Delta$ restricted to $\alpha_i$. These can moreover be chosen so that at $p$ and $q$, we have $u_1=v_2$ and $u_2=v_1$. These combine to determine a second framing for the trivial $2$-plane bundle over $\gamma$. If the two framings described above agree, then $\Delta$ is a called a \emph{Whitney disk} pairing the double points at $p$ and $q$.

A \emph{Whitney tower} is a special type of union of immersed surfaces.  The precise definition is recursive.  A union of properly immersed oriented surfaces in a 4-manifold $W$ which are transverse to each other is a Whitney tower.   Let $T$ be a Whitney tower and $\Delta$ be a Whitney disk pairing two intersections of opposite signs between 
surfaces in $T$.  Suppose also that  $\Delta$ is disjoint from the boundary of every surface in $T$. Then $T\cup \Delta$ is a Whitney tower.  

The various immersed surfaces which make up a Whitney tower have an associated \emph{order}.  The initial surfaces in a Whitney tower $T$ are called \emph{order 0} surfaces in $T$.  A point in the intersection of an order $k$ and an order $\ell$ surface in $T$ is called an \emph{order $k+\ell$} intersection.  A Whitney disk pairing two order $k$ intersections is called an \emph{order $k+1$} Whitney disk.  If all intersection points of order less than $k$ are paired by Whitney disks, then $T$ is called an \emph{order $k$} Whitney tower. 

Given an intersection point $p$ in a Whitney tower $T$, it may be that in the 4-manifold $W$ there is no Whitney disk pairing $p$ with another intersection point in $T$; as a consequence, there is a filtration of link concordance, as we now describe. Suppose that $W$ is a 4-manifold with $\bdry W = \bdry_+ W \cup -\bdry_- W$.  Two $n$-component links $L\subseteq \bdry_+ W$ and $J\subseteq \bdry_- W$ are \emph{order $k$ Whitney tower concordant in $W$} if there is an order $k$ Whitney tower $T$ in $W$ so that the order $0$ surfaces of $T$ are $n$ immersed annuli $A_1,\dots, A_n$ with $\bdry A_i = L_i\cup-J_i$.  

\begin{definition}\label{def:Whitneyconcordant} If $L$ and $J$ are links in homology spheres that are order $k$ Whitney tower concordant in a simply connected homology cobordism between the homology spheres, then we say that $L$ and $J$ are \emph{order $k$ Whitney tower concordant} and write $L\simeq_k J$. \end{definition}

\begin{remark}
Particularly for links in $S^3$, the equivalence relation from Definition~\ref{def:Whitneyconcordant} has been the subject of deep study and is known to be highly nontrivial. The reader is directed to \cite{MR3770161, CST2012, CST2014}, for example, for further background and results.\end{remark}

The main goal of this section is to prove Theorem~\ref{thm:WhitneyTowerCobordism}. For convenience, we recall the statement.

\begin{reptheorem}{thm:WhitneyTowerCobordism}
If $L$ is link in a homology sphere and $k$ is a nonnegative integer, then there is a link $J$ in $S^3$ such that~$L\simeq_k J$.
\end{reptheorem}

In order to construct the link $J$ in Theorem~\ref{thm:WhitneyTowerCobordism}, as well as the needed Whitney tower concordance, we  extend the idea of a relative Whitney disk to an object analogous to a Whitney tower with relative Whitney disks in place of Whitney disks.

A \emph{relative Whitney tower} is recursively defined as follows.  A union of properly immersed oriented surfaces in a 4-manifold $W$ which are transverse to each other is a relative Whitney tower.  
Let $T$ be a relative Whitney tower and $\Delta$ be a relative Whitney disk associated with a double point in $T$.  Suppose that $\Delta$ is disjoint from the boundary of every surface in $T$ other than the endpoints of its relative Whitney arc.
 Then $T\cup \Delta$ is a relative Whitney tower.

Similarly to Whitney towers, relative Whitney towers have an associated order.  The initial surfaces in a Whitney tower $T$ are called \emph{order 0 surfaces} of $T$.  A point in the intersection of an order $k$ and an order $\ell$ surface in $T$ is called an \emph{order $k+\ell$ intersection}.  A relative Whitney disk associated to an order $k$ intersection is called an \emph{order $k+1$ relative Whitney disk}.  If all intersection points of order less than $k$ have relative Whitney disks in $T$, then $T$ is called an \emph{order $k$ relative Whitney tower.}

\begin{remark}The proof of Proposition~\ref{prop:get immersed disks} involved the construction of an object which is a relative Whitney tower.\end{remark}

The proof of Theorem~\ref{thm:WhitneyTowerCobordism} will require two lemmas which we hope also provide evidence that relative Whitney towers are interesting and useful.  First, in Lemma~\ref{lem: rel Whit tow exist} we show that they exist much more readily than Whitney towers.  Secondly, in Lemma~\ref{lem: rel Whit tow to Whit tow} we explain how a relative Whitney tower can be modified to produce a Whitney tower.

\begin{lemma}\label{lem: rel Whit tow exist}
Let $W$ be a 4-manifold and $S = S_1\cup\dots\cup S_n$ be a  union of properly immersed connected oriented surfaces in $W$. If $Y$ is a connected submanifold of $\bdry W$ such that $\pi_1(Y)\to \pi_1(W)$ is surjective, and $\bdry S_i\cap Y\neq \emptyset$ for each $i$,  then for any nonnegative integer $k$, there is an order $k$ relative Whitney tower $T$ whose order 0 surfaces are precisely $S$ and for which all relative Whitney arcs are contained in $Y$.
\end{lemma}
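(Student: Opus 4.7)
The plan is to induct on $k$. For the base case $k=0$, by topological transversality \cite{Quinn:transversality, Freedman-Quinn:1990-1} we may perturb $S$ so that the components $S_i$ meet pairwise transversely, at which point $S$ itself is an order $0$ relative Whitney tower with no higher Whitney disks attached.

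For the inductive step, assume we have an order $k$ relative Whitney tower $T$ with the additional property that every surface in $T$ meets $Y$ nontrivially. This strengthened hypothesis holds at order $0$ by the assumption $\partial S_i \cap Y \neq \emptyset$, and will be preserved at higher orders because each relative Whitney disk we construct has its relative Whitney arc lying in $Y$. Fix an unpaired order $k$ intersection $p$ of $T$ between local sheets $\Sigma_1$ and $\Sigma_2$ of (not necessarily distinct) surfaces in $T$. Since both of the ambient surfaces meet $Y$, we may choose embedded arcs $\alpha_i \subseteq \Sigma_i$ from $p$ to points $q_i \in Y$ that are disjoint from all double points of $T$ other than $p$, as in the setup of Section \ref{sect: rel Whit trick}. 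Next, pick any embedded arc $\alpha_3' \subseteq Y$ from $q_2$ to $q_1$ whose interior avoids the $1$-complex $\partial T \cap Y$; such an arc exists generically since $\dim Y = 3$. The concatenation $\alpha_1 * \alpha_2 * \alpha_3'$ represents some class $g \in \pi_1(W, q_1)$, and by surjectivity of $\pi_1(Y) \to \pi_1(W)$ there is a loop $\tilde g \subseteq Y$ at $q_1$ mapping to $g$. Replacing $\alpha_3'$ by $\alpha_3' * \tilde g^{-1}$ and then perturbing to an embedded arc in $Y$ with interior disjoint from $\partial T$, we obtain an arc $\alpha_3$ such that $\alpha_1 * \alpha_2 * \alpha_3$ is null-homotopic in $W$. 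Any such null-homotopy yields, after a small homotopy rel boundary into a generic immersion transverse to $T$, a relative Whitney disk $\Delta_p$ for $p$. Performing this construction for every order $k$ intersection and then arranging the new Whitney disks to be mutually transverse produces an order $k+1$ relative Whitney tower with all relative Whitney arcs in $Y$.

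The key point to emphasize, and the main way this differs from the construction of ordinary Whitney towers, is that no algebraic intersection obstruction appears: relative Whitney disks are governed only by the null-homotopy of a single loop whose final segment lies in $Y$. Our $\pi_1$-surjectivity hypothesis trivializes this, so intersection points need not be paired by opposite signs before a Whitney disk can be found, and no cancellation issues arise in the induction. The one bookkeeping item to check is that the new double points introduced by adding $\Delta_p$ (its intersections with the rest of $T$ and its self-intersections) all have order at least $k+1$, so they do not spoil the order $k+1$ tower condition; this is immediate from the definition of order together with the fact that $\Delta_p$ is an order $k+1$ Whitney disk.
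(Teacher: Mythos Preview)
Your proof is correct and follows essentially the same inductive approach as the paper: both arguments strengthen the inductive hypothesis to ensure every surface in the tower meets $Y$, then use $\pi_1$-surjectivity to choose the relative Whitney arc in $Y$ so that the resulting loop is nullhomotopic, and finally observe that the new disk has order $k+1$ so no lower-order intersections are introduced. The only point the paper makes slightly more explicit is that the arcs $\alpha_1,\alpha_2$ on the surfaces $A,B$ must be chosen disjoint from the boundary arcs of any relative Whitney disks already in $T$ (a condition required by the definition of relative Whitney tower), but this is immediate from general position in a 2-dimensional surface and does not affect your argument.
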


\begin{lemma}\label{lem: rel Whit tow to Whit tow}
Let $W$ be a 4-manifold and $T$ be an order $k$ relative Whitney tower.  If $Y$ is a connected submanifold of $\bdry W$ and contains all relative Whitney arcs of $T$, then there exists an order $k$ Whitney tower $T'$ such that the order 0 surfaces of $T$ and $T'$ differ by a homotopy which is constant outside of a small neighbourhood of $Y$. \end{lemma}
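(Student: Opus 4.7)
I proceed by induction on $k$, with the trivial base case $k=0$: an order-$0$ Whitney tower and an order-$0$ relative Whitney tower coincide as transverse unions of properly immersed oriented surfaces, so one may take $T' = T$.

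For the inductive step, given an order-$k$ relative Whitney tower $T$ with $k \geq 1$, I apply the relative Whitney trick at each of the top-order relative Whitney disks of $T$, namely those of order $k$ associated to the order-$(k-1)$ intersections. By the construction in Section~\ref{sect: rel Whit trick} together with Remark~\ref{rmk: affect of RWT}, applying the trick at such a disk $\Delta_p$ removes the order-$(k-1)$ intersection $p$ via a homotopy of one local sheet of $p$ supported near $\alpha_3^p \subseteq Y$, while producing two new intersections of opposite signs near each interior intersection of $\Delta_p$ with other surfaces and disks of $T$. The key point is that each such pair of new intersections admits a locally constructed Whitney disk built from the product structure of the relative Whitney trick near the corresponding interior intersection point: a small band on one surface combined with a path on the doubled parallel copies of $\Delta_p$ in the modified surface. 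These local Whitney disks pair the new intersections using the standard local model.

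After processing all top-order relative Whitney disks in this way, every original order-$(k-1)$ intersection of $T$ has been removed, every newly introduced intersection of order $< k$ is paired by a local Whitney disk, and the lower-order structure of $T$ (levels below $k$) has been altered only by these localized operations. The resulting configuration is an order-$(k-1)$ relative Whitney tower at the lower levels, topped by the new order-$k$ layer of Whitney disks; all relative Whitney arcs remain contained in $Y$. Applying the inductive hypothesis to this order-$(k-1)$ relative Whitney tower yields an order-$(k-1)$ Whitney tower structure on the lower levels, and combined with the top-level Whitney disks this assembles to the desired order-$k$ Whitney tower $T'$. All modifications to the order-$0$ surfaces are supported near $Y$, as required.

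The principal technical obstacle is the careful construction and framing verification of the local Whitney disks paired to the newly introduced intersections, together with the bookkeeping needed to ensure that repeated applications of the relative Whitney trick (at different top-order disks and at successive inductive stages) can be performed so as to assemble into a well-defined Whitney tower, in particular preserving generic transversality and keeping the pairing data consistent across levels.
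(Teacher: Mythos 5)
Your approach takes a genuinely different route from the paper's, and it contains a gap that I believe is fatal to the argument as written.

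The central problem is that the relative Whitney trick is the wrong tool here. As defined in Section~\ref{sect: rel Whit trick}, the relative Whitney trick removes a tube around the arc $\alpha_1\subseteq A$ (which runs from the intersection point $p$ all the way into the interior of $W$) and replaces it with push-offs of the \emph{entire} disk $\Delta_p$. If $A$ is an order-$0$ surface, this is a homotopy supported along all of $\alpha_1\cup\Delta_p$, which in general is nowhere near $Y$. Your claim that this homotopy is ``supported near $\alpha_3^p\subseteq Y$'' is incorrect, and it directly contradicts the lemma's requirement that the order-$0$ surfaces change only inside a small neighbourhood of $Y$. The same problem appears in your acknowledgement that new intersections arise ``near each interior intersection of $\Delta_p$'': those new intersections can occur far from $Y$. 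A secondary issue is the order bookkeeping: the new intersections of $A'$ with a sheet $F$ coming from $r\in\int(\Delta_p)\cap F$ have order determined by the orders of $A$ and $F$, not by $k$; it is not established that these all have order $\geq k$, nor that your proposed local Whitney disks are themselves free of intersections that would need further pairing. Without that accounting, one cannot conclude that the result is an order-$k$ Whitney tower.

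The paper's proof sidesteps all of this by not removing $p$ at all. Instead it introduces mixed Whitney towers and the transition parameter, and it processes the highest-order relative Whitney disks first. For each $\Delta_p$ of order $\ell$, both local sheets $A$ and $B$ are finger-moved towards one another only in the region $\Phi(Q\cup R\times[-1,1]^2)$ near the relative Whitney arc (close to $\partial W$), producing exactly one new intersection $q\in A'\cap B'$. The subdisk $\Delta'$, the closure of $\Phi(\Delta_{xyz}\smallsetminus(Q\cup R))$, is then an honest Whitney disk pairing $p$ with $q$. Since $T$ has no relative Whitney disks of order $>\ell$, the regions $Q$ and $R$ can be taken small enough that the only thing met is $A$, $B$, $\Delta_p$, and $\partial W$, so no other intersections are created; and since $\Delta'$ inherits the interior intersections of $\Delta_p$ (all of order $\geq\ell$), no new pairing obligations arise below the order that has already been handled. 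Crucially, the support is confined to a neighbourhood of $Y$, exactly as the lemma requires. If you try to repair your argument by shrinking the relative Whitney trick so that it only affects $A$ near $Y$, you are led precisely to the paper's move.
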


Armed with these lemmas we can prove Theorem~\ref{thm:WhitneyTowerCobordism}.

\begin{proof}[Proof of Theorem~\ref{thm:WhitneyTowerCobordism} $($assuming Lemmas~\ref{lem: rel Whit tow exist} and~\ref{lem: rel Whit tow to Whit tow}$)$] 
Let $L$ be a link in a homology sphere $Y$, let $k$ be a nonnegative integer, and let $X$ be the contractible 4-manifold bounded by $Y$. Let $W$ be a 4-manifold obtained by removing an open 4-ball from $X$.  Note that $W$ is a simply connected homology cobordism from $Y$ to $S^3$.  Since $W$ is simply connected, the components of $L$ are freely homotopic in $W$ to the components of the unlink in $S^3$.  Thus, there exists a collection of immersed annuli $A_1,\dots, A_n$ so that $A_i$ is bounded by $L_i$ and the $i^{\text{th}}$ component of the unlink in $S^3$.  

By Lemma~\ref{lem: rel Whit tow exist}, there is an order $k$ relative Whitney tower $T$ whose order $0$ surfaces are precisely $A_1,\dots, A_n$.  Moreover, we can ensure that all relative Whitney arcs of $T$ are contained in $S^3$. By Lemma~\ref{lem: rel Whit tow to Whit tow}, there is an order $k$ Whitney tower $T'$ whose order 0 surfaces are homotopic to $A_1,\dots, A_n$ by a homotopy which is constant away from a neighbourhood of $S^3$. In particular, the $i^{\text{th}}$ component of the order 0 surface of $T'$ is bounded by $L_i$ in $Y$ and by some knot $J_i$ in $S^3$.   Setting $J = J_1\cup\dots\cup J_n$, we conclude that $L\simeq_k J$, completing the proof.  
\end{proof}

Next, we prove Lemmas~\ref{lem: rel Whit tow exist} and \ref{lem: rel Whit tow to Whit tow}.  The proof of Lemma~\ref{lem: rel Whit tow exist} follows a relatively straightforward induction, which we now present.

\begin{proof}[Proof of Lemma~\ref{lem: rel Whit tow exist}]

Let $W$ be a 4-manifold and suppose $Y$ is a submanifold of $\bdry W$ such that $\pi_1(Y)\to \pi_1(W)$ is surjective. Consider also a collection of connected oriented properly immersed surfaces $S_1,\dots, S_n$ in $W$ such that $\bdry S_i\cap Y\neq \emptyset$ for each $i$. 

We will  inductively prove that for every nonnegative integer $k$, there is an order $k$ relative Whitney tower with order $0$ surfaces $S_1,\dots, S_n$ and whose relative Whitney arcs are contained in $Y$.  When $k=0$ there is nothing to prove.

Let $T$ be an order $k$ relative Whitney tower satisfying the above properties.
Let $p$ be an order $k$ intersection point in $T$.  Then $p$ is contained in the intersection of two surfaces $A$ and $B$ in $T$ of order $a$ and $b$ respectively where $a+b=k$.  If $a=0$, then $A$ is an order $0$ surface $S_i$ and by assumption, there is an embedded arc $\alpha$ in $A$ running from $p$ to a point $q$ in $Y$. If $a>0$, then $A$ is  a relative Whitney disk of $T$ and there is an embedded arc $\alpha$ in $A$ running from $p$ to a point $q$ on the associated relative Whitney arc. A schematic including $\alpha$ appears to the left of Figure~\ref{fig: Second order Whit disk}. By assumption this relative Whitney arc is contained in $Y$ and hence so is $q$. For any other relative Whitney disk $\Delta$ in $T$, we have that  $\bdry \Delta \cap A$ is either empty or is an embedded arc in $A$ with one endpoint in $\bdry A$ and the other interior to $A$.  Thus, we can arrange that except for its endpoint on $\bdry A$, $\alpha$ is disjoint from the boundary of every surface in $T$.
Similarly, there is an embedded arc $\beta$ contained in $B$ running from $p$ to a point $r$ in $Y$.  

\begin{figure}[h!]
         \begin{tikzpicture}[scale=1]
         \node at (0,0){\includegraphics[width=37.5mm]{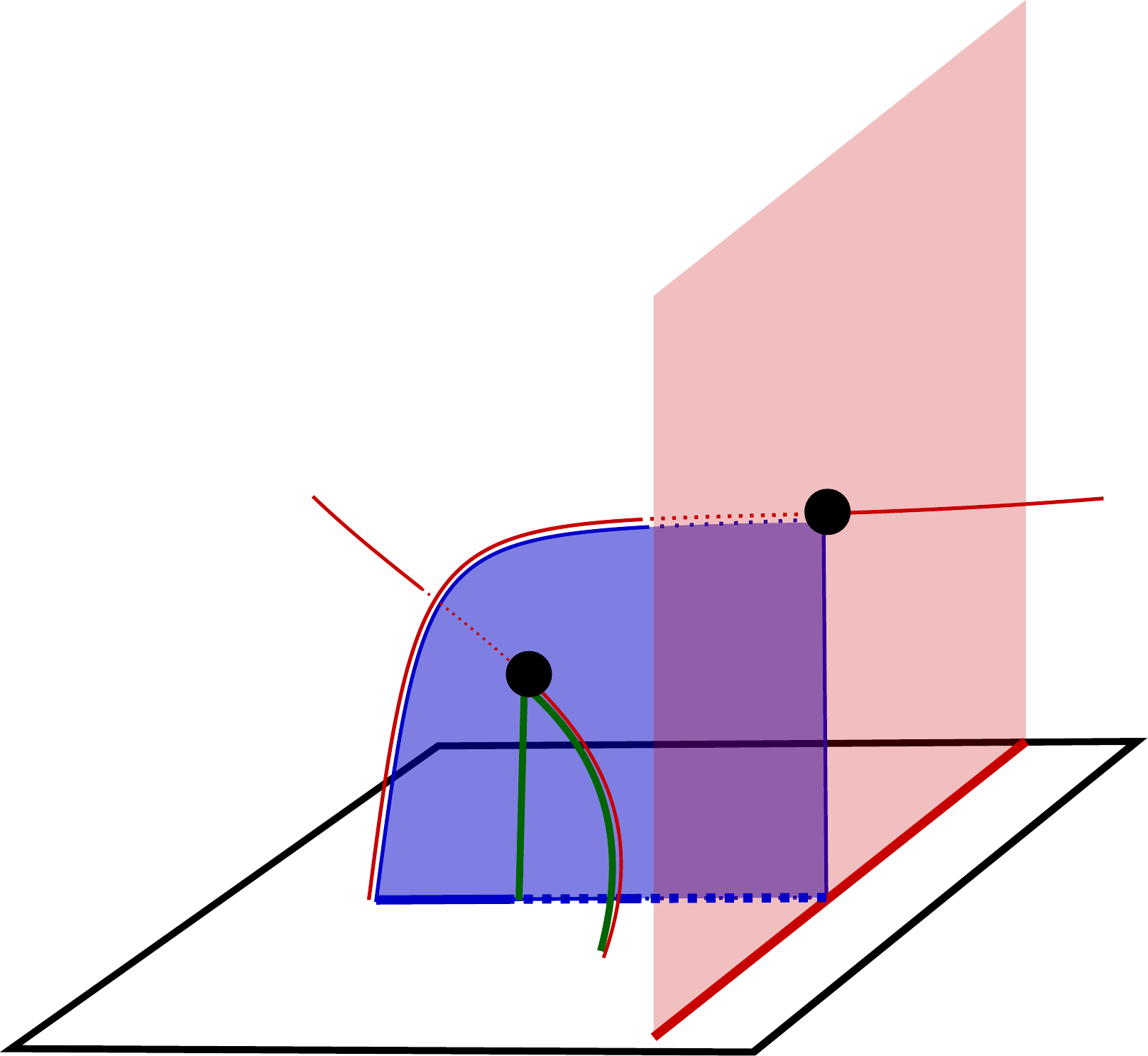}};
         \node at (-.1,.7) {$p$};
         \draw (-.1,.5)--(-.14,-.55);
         \node at (-1.5,-.4) {$\alpha$};
         \draw (-1.3,-.45)--(-.17,-.9);
         \node at (-1.05,-1.45) {$\beta$};
         \draw (-.9,-1.4)--(.11,-.95);
         \node at (5,0){\includegraphics[width=37.5mm]{rel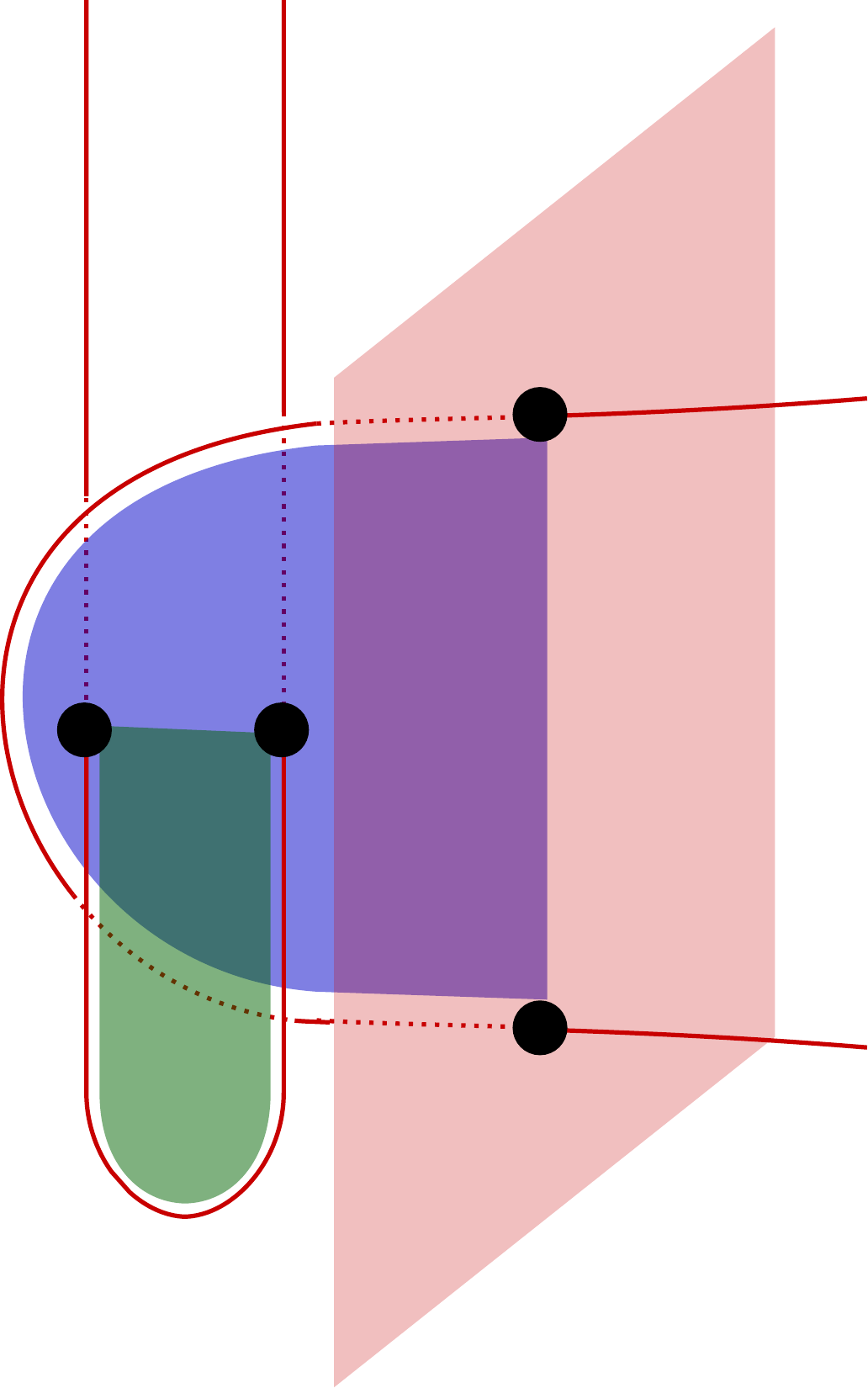}};
         \end{tikzpicture}     
        \caption{Left: A relative Whitney disk in $T$ containing an intersection point $p$ along with arcs $\alpha$ and $\beta$ in different sheets of $T$ from $p$ to the boundary.  Right: A relative Whitney disk associated with $p$.}
        \label{fig: Second order Whit disk}
\end{figure}

Since $\pi_1(Y)\to \pi_1(W)$ is onto, there is an embedded arc $\gamma$ in $Y$ from $r$ to $q$ so that $\alpha*\beta*\gamma$ bounds an immersed disk $\Delta_p$ in $W$.
  Thus every order $k$ intersection point in $Y$ admits a relative Whitney disk with relative Whitney arc in $Y$.  After isotoping the interior of $\Delta_p$ we have that it is disjoint from the boundary of each surface in $T$. By replacing $T$ by $T\cup \Delta_p$ we arrange that the order $k$ intersection now has a relative Whitney disk in $T$.   Observe that $\Delta_p$ has order $k+1$, so any intersections in its interior are of order at least $k+1$; adding $\Delta_p$ to $T$ adds no new intersections of order up to $k$. By adding a relative Whitney disk to $T$ for every such intersection point, we produce an order $k+1$ relative Whitney tower and complete the proof.\end{proof}

Next we explain how to use a relative Whitney tower to find an honest Whitney tower, and prove Lemma~\ref{lem: rel Whit tow to Whit tow}. Our argument will employ objects interpolating between Whitney towers and relative Whitney towers. They are akin to both Whitney towers and relative Whitney towers, allowing for both Whitney disks and relative Whitney disks.

A \emph{mixed Whitney tower} is defined recursively as follows.  A union of properly immersed surfaces in a 4-manifold $W$ which are transverse to each other is a mixed Whitney tower.  Let $T$ be a mixed Whitney tower and $\Delta$ be either a Whitney disk pairing two intersection points between surfaces in $T$ or a relative Whitney disk associated with a single intersection between surfaces in $T$.  If $\Delta$ is a Whitney disk then we require $\Delta$ be disjoint from the boundary of any surface in $T$.  If $\Delta$ is a relative Whitney disk then $\Delta$ is disjoint from the boundary of any surface in $T$ away from the endpoints of its relative Whitney arc. Then $T\cup \Delta$ is a mixed Whitney tower.

Mixed Whitney towers have an associated order.  The initial surfaces in a mixed Whitney tower $T$ are called \emph{order 0 surfaces} in $T$.  A point in the intersection of an order $k$ and an order $\ell$ surface in $T$ is called an \emph{order $k+\ell$ intersection}.  
A Whitney disk pairing two order $k$ intersections is called an \emph{order $k+1$ Whitney disk}.
A relative Whitney disk associated to an order $k$ intersection is called an \emph{order $k+1$ relative Whitney disk}.   If all intersection points of order less than $k$ have either associated Whitney disks or relative Whitney disks in $T$, then $T$ is called an \emph{order $k$ mixed Whitney tower.}

\begin{proof}[Proof of Lemma~\ref{lem: rel Whit tow to Whit tow}.]

For an order $k$ mixed Whitney tower $T$ and a natural number $\ell\leq k$, if $T$ has no relative Whitney disks of order greater than $\ell$ then we say $T$ \emph{transitions at $\ell$}.  Clearly a relative Whitney tower of order $k$ is a mixed Whitney tower which transitions at $k$.  Additionally, a mixed Whitney tower is a  Whitney tower if and only if it transitions at $0$.  Thus if we can explain how to lower the parameter $\ell$ at which a given mixed Whitney tower transitions, then {induction will complete the proof}. Precisely, let $W$ be a 4-manifold containing a mixed Whitney tower $T$ of order $k$ which transitions at $\ell$. Moreover, suppose $Y$ is a submanifold of $\bdry W$ and contains all relative Whitney arcs of $T$. We claim that there exists a Whitney tower $T'$ of order $k$ which transitions at $\ell-1$, so that all relative Whitney arcs are contained in $Y$, and so that the order 0 surfaces of $T$ and $T'$ differ by a homotopy which is constant outside of a small neighbourhood of $Y$.

The proof of this claim will proceed by changing $T$ by homotopies introducing new intersection points so that order $\ell$ relative Whitney disks become Whitney disks.  Let $p$ be an order $\ell-1$ intersection point sitting in the intersection of surfaces $A$ and $B$ in $T$ and let $\Delta_p$ be an associated order $\ell$ relative Whitney disk.  The move drawn schematically in Figure~\ref{fig: rel Whit disk to Whit disk} changes $A$ by a homotopy, and adds a new intersection point in $A\cap B$.  A subdisk $\Delta'$ of $\Delta_p$ forms a Whitney disk pairing $p$ and this new intersection point.  We proceed to explain this move.

\begin{figure}[h]
\begin{picture}(310,105)
\put(0,0){\includegraphics[height=37mm]{relWhitDisk.pdf}}

\put(90,80){$A$}
\put(110,55){$B$}
\put(85,60){$p$}

\put(55,35){$\Delta_p$}

\put(160,0){\includegraphics[height=37mm]{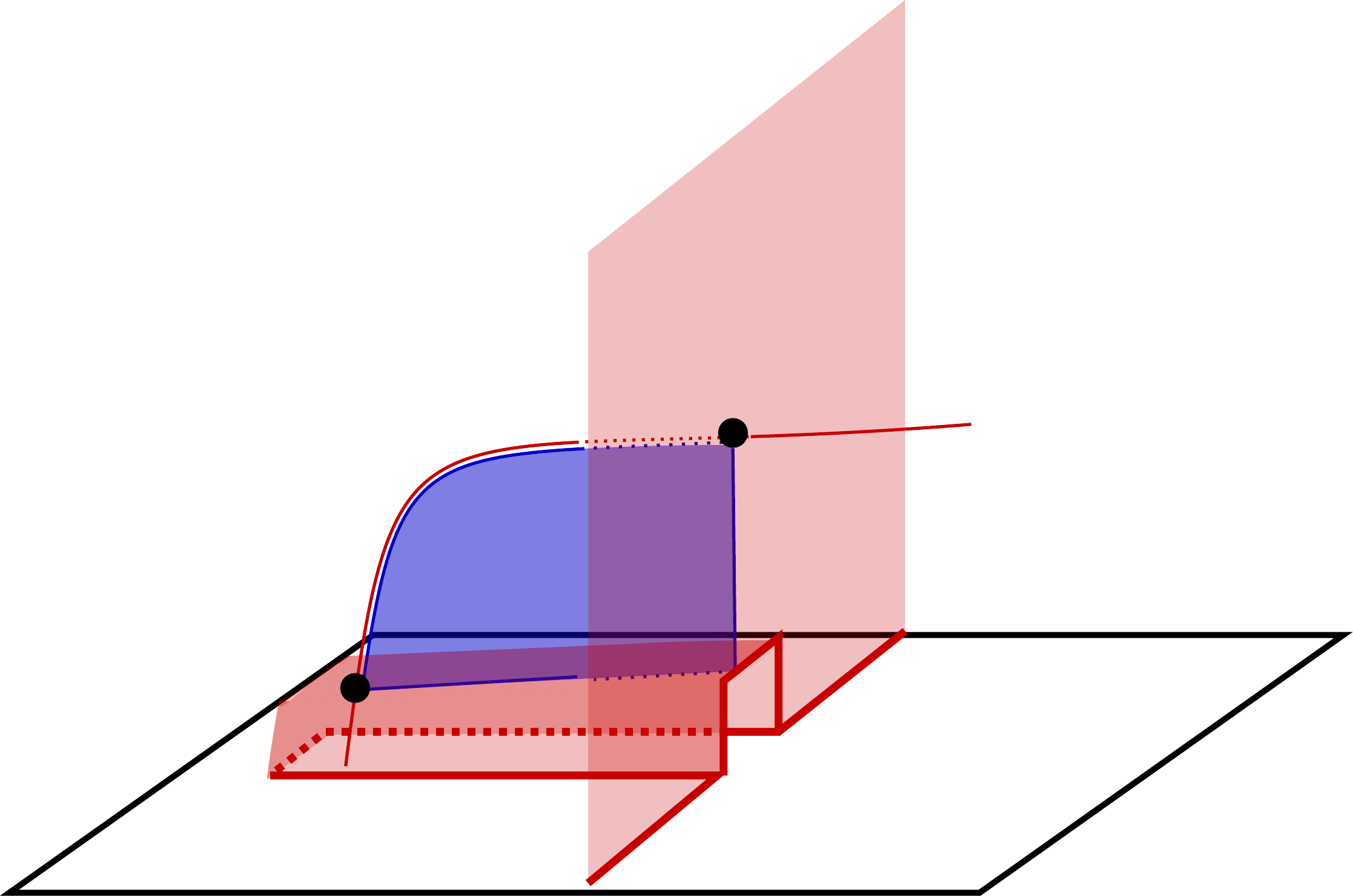}}

\put(250,80){$A'$}
\put(270,55){$B'$}
\put(245,60){$p$}

\put(215,40){$\Delta'$}
\end{picture}
\caption{Left:  A relative Whitney disk associated to the point $p\in A\cap B$.  Right:  After changing $A$ by a homotopy, we see a new point of intersection and a Whitney disk.}\label{fig: rel Whit disk to Whit disk}
\end{figure}

As in Section \ref{sect: rel Whit trick}, and using the same notation, we find an immersion $\Phi:\Delta_{xyz}\times \RR^2\to W$ parametrizing a tubular neighbourhood of $\Delta_p$.  Since $T$ contains no relative Whitney disks of order greater than $\ell$, we see that $T$ contains no relative Whitney disks associated with intersection points on $\Delta_p$.  As a consequence, if we take the regions $Q$ and $R$ in Figure~\ref{fig: triangle 2} close enough to $\lineseg{x}{z}$ then $\Phi$ will be an embedding when restricted to ${Q\cup R\times\RR^2 }$ and will have image disjoint from $T$ except that $\Phi(Q\cup R\times\{(0,0)\})\subseteq \Delta_p$,
 $\Phi(\lineseg{x}{x'}\times\RR\times\{0\})\subseteq A$ and $\Phi(\lineseg{z}{z''}\times\{0\}\times\RR)\subseteq B$.

\begin{figure}[h]
\begin{tikzpicture}[scale=.5]
\tikzset{
    partial ellipse/.style args={#1:#2:#3}{
        insert path={+ (#1:#3) arc (#1:#2:#3)}
    }
}

\node[] at (4,0) (x) {};
\node[] at (0,{8*sin(60)}) (y) {};
\node[] at (-4,0) (z) {};
\node[] at ({4-cos(60)}, {sin(60)}) (x') {};
\node[] at (-3,0) (z') {};
\node[] at ({-4+cos(60)},{sin(60)}) (z'') {};
\node[] at ({-4+2*cos(60)},{2*sin(60)}) (z''') {};
\node[] at ({-4+2*cos(60)+1},{2*sin(60)}) (w) {};
\node[] at ({-4+cos(60)+1},{sin(60)}) (u) {};


\node[left] at (z) {$z$};
\node[below] at (z') {$z'$};
\node[left] at (z'') {$z''$};
\node[left] at (z''') {$z'''$};
\node[right] at (x) {$x$};
\node[above] at (y) {$y$};
\node[above right] at (x') {$x'$};
\node[above right] at (w) {$w$};
\node[above right] at (u) {$u$};

\draw (x.center)--(y.center)--(z.center)--(x.center);
\draw[pattern=north west lines, pattern color=diagramred]  (z'.center)--(w.center)--(z'''.center)--(z.center);
\draw[pattern=north east lines, pattern color=diagramblue]  (x.center)--(x'.center)--(z''.center)--(z.center);

\draw[fill=black] (x) circle (.1);
\draw[fill=black] (y) circle (.1);
\draw[fill=black] (z) circle (.1);
\draw[fill=black] (x') circle (.1);
\draw[fill=black] (z') circle (.1);
\draw[fill=black] (z'') circle (.1);
\draw[fill=black] (z''') circle (.1);
\draw[fill=black] (w) circle (.1);
\draw[fill=black] (u) circle (.1);

\node at (0,-1.3){$Q$};
\draw (0,-.8)--(0,.5);

\node at ({-6+1.5*cos(60)},{1.5*sin(60)}) {$R$};
\draw({-5.5+1.5*cos(60)},{1.5*sin(60)})--({-3.5+1.5*cos(60)},{1.5*sin(60)});

\end{tikzpicture}
\caption{The triangle $\Delta_{xyz}$, with a highlighted quadrilaterals $Q$ and $R$ having vertex sets $\{x,x',z'',z\}$ and $\{z,z',w,z'''\}$ respectively.}  \label{fig: triangle 2}
\end{figure} 

Modify $A$ and $B$ using $\Phi(Q\times[-1,1]\times\{0\})$ and $\Phi(R\times\{0\}\times[-1,1])$  as guides.  That is, set
\begin{multline*}
A':=
\left(A\smallsetminus\Phi\left(\lineseg{x}{x'}\times[-1,1]\times\{0\}\right)\right) 
\cup \Phi\left(Q\times\{-1,1\}\times\{0\} \right)
\\\cup \Phi\left(\left(\lineseg{x'}{z''}\cup\lineseg{z''}{z}\right)\times[-1,1]\times\{0\}\right),
\end{multline*}
and 
\begin{multline*}
B':=\left(B\smallsetminus\Phi\left(\lineseg{z}{z''}\times\{0\}\times[-1,1]\right)\right) 
\cup \Phi\left(R\times\{0\}\times\{-1,1\}\right)
\\\cup\Phi\left( \left(\lineseg{z'''}{w}\cup\lineseg{w}{z'}\right)\times\{0\}\times[-1,1]\right).
\end{multline*}

The embedded (but non-disjoint) cubes $\Phi(Q\times[-1,1]\times\{0\})$ and $\Phi(R\times\{0\}\times[-1,1])$ parametrize a homotopy from $A\cup B$ to $A'\cup B'$ which is constant outside of $\Phi(Q\cup R\times[-1,1]\times[-1,1])$. 
{The relative Whitney arc associated to $\Delta_p$ is parametrized by $\Phi(\lineseg{x}{y})$, and by assumption this is contained in $Y$.} 
  Thus, by taking the tubular neighbourhood of $\Delta_p$ small enough, and the regions $Q$ and $R$ close enough to $\lineseg{x}{z}$, we can arrange that $\Phi(Q\cup R\times[-1,1]\times[-1,1])$ is contained in a small neighbourhood of $Y$.  The homotopy constructed is constant outside of this small neighbourhood of $Y$.

By direct inspection, $A'\cap B' = \left(A\cap B\right) \cup \{q\}$ where $q=\Phi(u,0,0)$.  Moreover, $\Delta'$, the closure of $\Phi(\Delta_{xyz}\smallsetminus (Q\cup R))$, gives a Whitney disk pairing $p$ and $q$.  Thus if $T'$ is given by replacing $A$, $B$, and $\Delta_p$ by $A'$, $B'$, and $\Delta'$ then $T'$ is still an order $k$ mixed Whitney tower and it has one fewer relative Whitney disks of order $\ell$ than $T$.  

By iterating the procedure above, we replace all order $\ell$ relative Whitney disks in $T$ with Whitney disks.  The result is an order $k$ mixed Whitney tower which transitions at $\ell-1$.  Induction on $\ell$ now completes the proof.
\end{proof}

 \section{Homotopy trivializing numbers for links in homology spheres.}\label{sect: Homotopy trivializing}

In the method we described for separating an immersed disk collection in Section \ref{sec:separating}, we obtained a precise relationship between the number of crossing changes between link components in the boundary, and the number of intersection points removed in the cobounding disk collection. In this section, we study this relationship in more detail. We introduce two link homotopy invariants (which we then prove to be the same). We provide precise calculations of these invariants for links of up to~3 components and more generally we provide bounds for these invariants.

\subsection{The homotopy trivializing number coincides with the disk intersection number}\label{subsec:nh=nd}

\begin{definition} Let $L=L_1\cup \dots \cup L_n$ be a link in a homology sphere $Y$. The \emph{disk intersection number} of $L$ is 
\[
n_d(L) := \min\left\{\textstyle{\sum_{i<j}\#(D_i\cap D_j)}\right\}
\]
where the minimum is taken over all collections of immersed disks $D_1\cup\dots \cup D_n$ in the contractible $4$-manifold bounded by $Y$, with boundary the link, and meeting one-another transversely.
\end{definition}

Recall that a link in a homology sphere is called \emph{4D-homotopically trivial} if it bounds disjoint immersed disks in the unique contractible $4$-manifold bounded by the homology sphere. 

\begin{definition}
The \emph{homotopy trivializing number} of a link $L$ is given by minimizing the Gordian distance $d_G$ from $L$ to a 4D-homotopically trivial link.  That is,
\[
n_h(L): = \min\{d_G(L,J) \mid J \text{ is 4D-homotopically trivial}\}.
\]
In more detail, we say that $d_G(L,J)\le m$ if there is a collection of disjoint 3-balls  $B_1,\dots, B_m$ in $Y$ so that $(B_i, B_i\cap L)$ is orientation preserving homeomorphic to one of the tangles in Figure~\ref{fig: crossing} and that $J$ is isotopic to the result of changing $L$ by a homotopy supported on $B_1\cup\dots \cup B_m$ replacing each positive crossing by a negative crossing and conversely. 
\end{definition}

We now recall Proposition~\ref{prop:nd=nh} with this added language and give a proof.
\begin{repproposition}{prop:nd=nh} 
For any link $L$ in any homology sphere $Y$ there is an equality
$n_d(L) = n_h(L)$.
\end{repproposition}
\begin{proof}
 Let $L$ be an $n$-component link in $Y$ with $n_d(L) = m$. Let $X$ be the contractible 4-manifold bounded by $Y$ and $D=D_1\cup\dots \cup D_n\subseteq W$ be a collection of immersed disks bounded by the components of $L$ with $\sum_{i<j}\#(D_i\cap D_j) = m$.  Since $X$ is contractible, the inclusion induced map $\pi_1(Y)\to\pi_1(X)$ is trivially surjective and we may apply Proposition~\ref{prop:get immersed disks} to see that there is a homotopy consisting of $m$ crossing changes from $L$ to a new link $J$ which is 4D-homotopically trivial. Thus, $n_d(L)\ge n_h(L)$, proving one of the needed inequalities.  

Now assume that $n_h(L)=m$ and let $J$ be a 4D-homotopically-trivial link obtained from $L$ by making $m$ crossing changes. If we cap off the trace of the homotopy from $L$ to $J$ in $Y\times[0,1]$ with a collection of disjoint immersed disks in $X$ bounded by the components of $J$ then we see a collection of immersed disks $D_1\cup\dots\cup D_n$ bounded by the components of $L$ with $\sum_{i<j}\#(D_i\cap D_j) \leq m$.  Thus, $n_d(L)\le n_h(L)$.
\end{proof}

\begin{remark}\label{rmk: homotopy invariance of nh}
The disk intersection number $n_d(L)$ is invariant under link homotopy.  As a consequence of Proposition~\ref{prop:nd=nh}, so is $n_h(L)$.  We take a moment to explain. Let $L$ and $J$ be links in $Y$. If they are link homotopic, then the trace of a link homotopy from $L$ to $J$ is an union of disjoint immersed annuli in $Y\times[0,1]$ cobounded by $L\times\{0\}$ and $J\times\{1\}$.  Let $X$ be the contractible 4-manifold bounded by $Y$.  By capping $J\times \{1\}$ with a collection of immersed disks bounded by $J$ in $X$ we see that $n_d(L)\le n_d(J)$.  By symmetry $n_d(L) = n_d(J)$. 
\end{remark}

\subsection{Computations of the homotopy trivializing number}\label{subsec:boundnh}
Any 4D-homotopically trivial link has vanishing linking numbers and each time when we perform a crossing change the linking number changes at most by one. Hence we have the following obvious lower bound on the homotopy trivializing number:
\begin{equation}\label{eq:lambda}
\Lambda(L):=\Sum_{i<j}|\lk(L_i,L_j)| \leq n_h(L).
\end{equation}
Moreover, since the linking number $\lk(L_i, L_j)$ can be computed by taking any immersed disks bounded by $L_i$ and $L_j$ in a homology ball, and counting points of intersection with sign, we see that $n_h(L) = n_d(L) \equiv \Lambda(L)\pmod 2$.

For a link with $2$ or $3$ components we
 determine $n_h(L)$ completely.  
For links of more than 3 components we find a bound on the difference between $n_h(L)$ and $\Lambda(L)$.  
Remarkably, this upper bound depends only on the number of components of $L$, and in particular is independent of the higher order link homotopy invariants of Milnor~\cite{M1}. We restate Theorem~\ref{thm: compute nh}.

\begin{reptheorem}{thm: compute nh}
Let $L$ be a link in a homology sphere. The the following holds.
\begin{itemize}[leftmargin=0.9cm]\setlength\itemsep{0em}
\item If $L$ is a 2-component link, then $$n_h(L) = \Lambda(L).$$ 
\item If $L$ is a 3-component link, then 
$$n_h(L) = \begin{cases}
\Lambda(L)&\text{if }\Lambda(L)\neq 0\\
2&\text{if }\Lambda(L)= 0\text{ and }\mu_{123}(L)\neq 0\\
0&\text{otherwise.}\\
\end{cases}
$$
\item In general, there is some $C_n$ so that for every $n$-component link $L$,
$$
\Lambda(L)\le n_h(L)\le \Lambda(L)+C_n.
$$
\end{itemize}
\end{reptheorem}

\begin{remark}By Corollary~\ref{cor: disjoint annuli to S3}, for any link $L$ in a homology sphere, there is a link $J$ in $S^3$ such that in a simply connected homology cobordism from $Y$ to $S^3$ the components of $L$ and $J$ cobound disjoint immersed annuli. It follows that $n_d(L) = n_d(J)$.  By Proposition~\ref{prop:nd=nh}, we have that $n_h(L) = n_h(J)$.  Lastly, recall that if two links in homology spheres cobound disjoint immersed annuli, then they have the same pairwise linking number and Milnor's triple linking number. Therefore it suffices to prove Theorem~\ref{thm: compute nh} for links in $S^3$, and for which the notion of 4D-homotopically trivial and Milnor's more classical notion of link homotopically trivial are the same.
\end{remark}

The proof of Theorem~\ref{thm: compute nh} passes though the string link classification of Habegger-Lin \cite{HL1}. We take a moment and recall some of their definitions and tools.

Pick $n$ points $p_1,\dots, p_n$ in the disk $D^2$.  An $n$-component \emph{string link} $T=T_1\cup \dots\cup T_n$ is a disjoint union of embedded arcs in $D^2\times[0,1]$ with $T_i$ running from $p_i\times\{0\}$ to $p_i\times \{1\}$. Let $\mathcal{LH}_n$ denote the set of $n$-component string links in $S^3$.

The notion of link homotopy extends in an obvious way to string links; let $\mathcal{SLH}_n$ denote the set of string links up to link homotopy.  Importantly, $\mathcal{SLH}_n$ is a group under the stacking operation of Figure~\ref{fig: stacking}.  The definition of homotopy trivializing number $n_h$ extends to string links and, just as for links in $S^3$, depends only on the link homotopy class.  Notice that $n_h$ is subadditive under the stacking operation. The operation of Figure~\ref{fig: closure} given by sending a string link $T$ to its closure $\widehat{T}$ gives a surjection $\mathcal{SLH}_n\onto \mathcal{LH}_n$.  For any string link $T$, it is now immediate that
\begin{equation}\label{eq:closurenh}
n_h(\widehat{T})\le n_h(T).
\end{equation}

 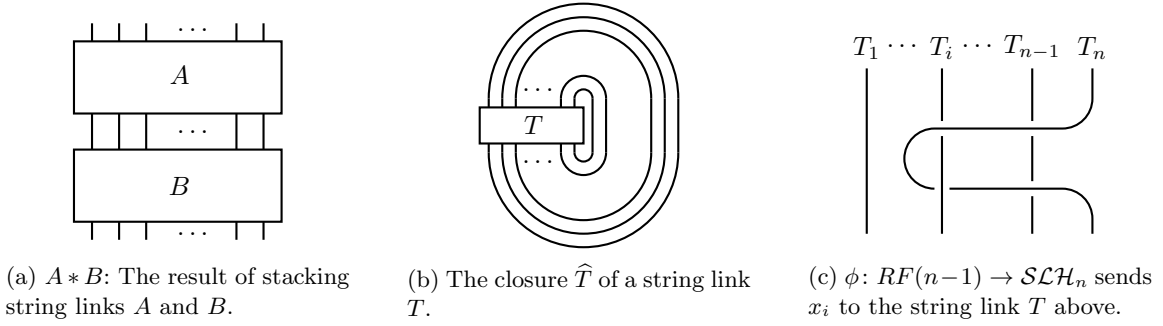
\begin{figure}[h]
\subcaptionbox{$A*B$: The result of stacking string links $A$ and $B$.\label{fig: stacking}}[0.3\linewidth]{
 \begin{tikzpicture}[scale=1.2]

\draw[thick, black] (0.2,-0.2) -- (0.2, 2.2);
\draw[thick, black] (0.5,-0.2) -- (0.5, 2.2);
\draw[thick, black] (0.8,-0.2) -- (0.8, 2.2);

\draw[thick, black] (1.8,-0.2) -- (1.8, 2.2);
\draw[thick, black] (2.1,-0.2) -- (2.1, 2.2);
\draw[thick, black, fill=white] (0,0) rectangle (2.3,0.8);

\draw[thick, black, fill=white] (0,1.2) rectangle (2.3,2);

\node[inner sep=0pt] at (1.17,0.4) {$B$};
\node[inner sep=0pt] at (1.17,1.63) {$A$};
\node[inner sep=0pt] at (1.34,-0.13) {\scalebox{1}{$\dots$}};
\node[inner sep=0pt] at (1.34,1) {\scalebox{1}{$\dots$}};
\node[inner sep=0pt] at (1.34,2.13) {\scalebox{1}{$\dots$}};

\node[above] at (1, -0.3) {$\,$};

\end{tikzpicture}
}
\hfill
\subcaptionbox{{The closure $\widehat T$ of a string link $T$.\label{fig: closure}}}[0.3\linewidth]{
\begin{tikzpicture}[scale=0.6]

\draw[thick, black] (0.2,-0.2) -- (0.2, 1);
\draw[thick, black] (0.5,-0.2) -- (0.5, 1);
\draw[thick, black] (0.8,-0.2) -- (0.8, 1);

\draw[thick, black] (1.8,-0.2) -- (1.8, 1);
\draw[thick, black] (2.1,-0.2) -- (2.1, 1);
\draw[thick, black, fill=white] (0,0) rectangle (2.3,0.8);


\node[inner sep=0pt] at (1.2,0.4) {$T$};
\node[inner sep=0pt] at (1.34,-0.4) {\scalebox{1}{$\dots$}};
\node[inner sep=0pt] at (1.34,1.2) {\scalebox{1}{$\dots$}};
\draw[thick, black] (2.1, 1) arc (-180:-360:.2);
\draw[thick, black] (1.8, 1) arc (-180:-360:{.2+.3});
\draw[thick, black] (.8, 1) arc (-180:-360:{.2+.3+1});
\draw[thick, black] (.5, 1) arc (-180:-360:{.2+.3+1+.3});
\draw[thick, black] (.2, 1) arc (-180:-360:{.2+.3+1+.3+.3});

\draw[thick, black] (2.1, -0.2) arc (180:360:.2);
\draw[thick, black] (1.8, -0.2) arc (180:360:{.2+.3});
\draw[thick, black] (.8, -0.2) arc (180:360:{.2+.3+1});
\draw[thick, black] (.5, -0.2) arc (180:360:{.2+.3+1+.3});
\draw[thick, black] (.2, -0.2) arc (180:360:{.2+.3+1+.3+.3});

\draw[thick, black] ({2*2.3-0.2},-0.2) -- ({2*2.3-0.2}, 1);
\draw[thick, black] ({2*2.3-0.5},-0.2) -- ({2*2.3-0.5}, 1);
\draw[thick, black] ({2*2.3-0.8},-0.2) -- ({2*2.3-0.8}, 1);

\draw[thick, black] ({2*2.3-1.8},-0.2) -- ({2*2.3-1.8}, 1);
\draw[thick, black] ({2*2.3-2.1},-0.2) -- ({2*2.3-2.1}, 1);

\end{tikzpicture}

}
\hfill
\subcaptionbox{{$\phi\colon RF(n-1)\to \mathcal{SLH}_n$ sends $x_i$ to the string link $T$ above.\label{fig: psi(x_i)}}}[0.3\linewidth]{
\begin{tikzpicture}[scale=1]

\node[above] at (0.2, 2) {$T_1$};
\draw[thick, black] (0.2,-0.2) -- (0.2, 2);

\node[inner sep=0pt] at (.7,2.3) {\scalebox{1}{$\dots$}};

\node[above] at (1.2, 2) {$T_i$};
\draw[thick, black] (1.2, 2)--(1.2,1.3);
\draw[thick, black] (1.2, 1.1)--(1.2,-0.2);
\node[inner sep=0pt] at (1.7,2.3) {\scalebox{1}{$\dots$}};

\node[above] at (2.4, 2) {$T_{n-1}$};
\draw[thick, black] (2.4, 2)--(2.4,1.3);
\draw[thick, black] (2.4, 1.1)--(2.4,.5);
\draw[thick, black](2.4,.3)--(2.4,-0.2);

\node[above] at (3.2, 2) {$T_{n}$};
\draw[thick, black] (3.2, 2)--(3.2,1.6) arc (0:-90:.4)--(1.1,1.2) arc(90:270:.4);
\draw[thick, black] (1.3,.4)--(2.8,.4) arc(90:0:.4)--(3.2,-.2);

\node[above] at (1, -0.4) {$\,$};

\end{tikzpicture}
}
\caption{The stacking and closure operations, together with the map $\phi$.
}
\label{fig: stacking and closure}
\end{figure} 

The key tool we will use in our proof of Theorem~\ref{thm: compute nh} is the split exact sequence of \cite[Lemma 1.8]{HL1}:
\begin{equation}\label{eq:shortexact}
\begin{tikzcd}
1\ar[r] &RF(n-1)\ar[r,"\phi"]  &\mathcal{SLH}_{n}\ar[r,"\psi"] & \mathcal{SLH}_{n-1}\ar[r] \arrow[l, bend left=25,  dashed, "s"]&1.
\end{tikzcd}
\end{equation}
Here, $RF(n-1)$ indicates the \emph{reduced free group}.  That is, $RF(n-1)$ is the quotient of the free group $F(n-1)$ with generators $x_1,\dots, x_{n-1}$ so that for each $i$ any conjugate of $x_i$ commutes with any other conjugate of $x_i$.  The map $\phi\colon RF(n-1)\to \mathcal{SLH}_{n}$ is the homomorphism sending a generator $x_i$ to the string link of Figure~\ref{fig: psi(x_i)}.  The fact that this is well defined follows from work in~\cite{HL1}.  The map $\psi\colon \mathcal{SLH}_{n}\to \mathcal{SLH}_{n-1}$ is given by deleting the $n^{\text{th}}$ component of a string link.  The splitting $s\colon\mathcal{SLH}_{n-1}\to \mathcal{SLH}_{n}$ of $\psi$ is given by adding to an $(n-1)$-component string link $T$ an unknotted component which does not interact with the components of $T$.  In summary, any $T\in \mathcal{SLH}_n$ decomposes as $T=s(\psi(T))*\phi(\gamma)$ for some $\gamma\in RF(n-1)$.

\begin{definition}
Let $w = x_{i_1}^{\epsilon_1} x_{i_2}^{\epsilon_2} \dots x_{i_\ell}^{\epsilon_\ell}$ be a word in the letters $x_1^{\pm1},\dots, x_n^{\pm1}$.  The \emph{trivializing number} of $w$, denoted by $Z(w)$, is the minimum number of deletions needed to reduce $w$ to a word representing the trivial element of the free group $F(n)$.

Let $\gamma\in RF(n)$ be an element of the reduced free group. The \emph{reduced trivializing number} of $\gamma$, denoted by $RZ(\gamma)$, is the minimum of $Z(w)$ among all words $w\in F(n)$ which represent $\gamma$.
\end{definition}

The  proof of Theorem~\ref{thm: compute nh} will be inductive with the inductive step requiring bounds on $n_h(\phi(\gamma))$ where $\gamma$ is an element of  $RF(n)$.  Since $n_h(\phi(x_i^{\pm1})) = 1$ for each generator of $RF(n)$, it follows that 
\begin{equation}\label{eqn:RZ bounds nh}
n_h(\phi(\gamma)) \le RZ(\gamma).
\end{equation}
During the proof of Theorem~\ref{thm: compute nh}, we will furthermore derive an upper bound on $RZ(\gamma)$ in terms of the classical concepts of \emph{basic commutators} and their \emph{weights}, so we recall the definition of these now.

\begin{definition}Writing $\{x_1,\dots, x_{n}\}$ for a generating set of $F(n)$, the ordered set $\{c_1,c_2,\dots\}$ of \emph{basic commutators}, along with associated integer valued \emph{weights} $w(c_1)\le w(c_2)\le\dots$, are defined by the following:
\begin{itemize}[leftmargin=0.9cm]\setlength\itemsep{0em}
\item If $i=1,\dots, n$, then $c_i=x_i$ and $w(c_i)=1$.
\item If $i<j$, then $w(c_i)\le w(c_j)$.
\item If $i>n$, then $c_i= [c_\ell,c_j]$ for some $\ell<j<i$.  Additionally $w(c_i)= w(c_\ell)+w(c_j)$.
\item If $c_i= [c_\ell,c_j]$ and $c_j=[c_r,c_s]$, then $r\le \ell$.
\item Every $[c_i,c_j]$ satisfying the conditions above is a basic commutator.  
\end{itemize}
\end{definition}

The next lemma studies the reduced trivializing number for basic commutators.

\begin{lemma}\label{lem: RZ of basic comm}
Let $c_i\in F(n)$ be a basic commutator. Then, considering $c_i$ as an element of the reduced free group $RF(n)$, we have
\[
\left\{\begin{array}{rcccl}
RZ(c_i^a) &=& |a|&&\text{if }1\leq i\leq n,\\
RZ(c_i^a) &\le& w(c_i)&&\text{otherwise.}
\end{array}\right.
\]
\end{lemma}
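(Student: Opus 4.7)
The plan is to split the lemma into the two stated cases. For $1 \leq i \leq n$, where $c_i = x_i$ is a generator, the upper bound $RZ(x_i^a) \leq |a|$ is immediate from the word $w = x_i^a$, and the matching lower bound follows from an abelianization argument: any word $w \in F(n)$ representing $x_i^a$ in $RF(n)$ has image $a \cdot e_i$ in $RF(n)^{\mathrm{ab}} \cong \Z^n$, and each letter-deletion changes the abelianization by a single standard basis vector, so at least $|a|$ deletions are needed before the word can become trivial in $F(n)$.

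For the basic commutator case I argue by induction on the weight $k = w(c_i) \geq 2$. First, if $c_i$ contains a repeated generator then $c_i$ is trivial in $RF(n)$ by the defining relation, so $RZ(c_i^a) = 0$ and the bound holds trivially; hence I may assume $c_i = [c_\ell, c_j]$ with $c_\ell, c_j$ involving disjoint generator sets and $w(c_\ell) \leq w(c_j)$ by the ordering on basic commutators. The key algebraic input is the identity
\[
c_i^a = [c_\ell^a, c_j] = [c_\ell, c_j^a] \quad \text{in } RF(n),
\]
valid whenever $c_\ell$ and $c_j$ use disjoint generator sets. To verify, say, the first equality, one must show $[c_\ell, c_j c_\ell c_j^{-1}] = 1$ in $RF(n)$: since $c_j c_\ell c_j^{-1}$ and $c_\ell$ agree in the abelianized graded piece, this commutator lies strictly deeper in the lower central series than its a priori weight, and each successive graded representative is a Lie bracket in which at least one generator of $c_\ell$ appears twice, hence vanishes in the associated graded free reduced Lie algebra. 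Iterating, the commutator eventually descends to $\gamma_{n+1}(RF(n)) = 1$ by the nilpotency of $RF(n)$, so it vanishes outright.

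Granted the identity, construction of the required word $W_a$ splits on the weight of $c_\ell$. In the base case $k = 2$ we have $c_i = [x_p, x_q]$; using $[x_p, x_q]^a = [x_p^a, x_q]$ the word $w_a = x_p^a x_q x_p^{-a} x_q^{-1}$ represents $c_i^a$ and reduces to trivial after deleting the two occurrences of $x_q^{\pm 1}$, giving $Z(w_a) \leq 2 = w(c_i)$. For the inductive step with $w(c_\ell) = 1$ so $c_\ell = x_p$, the identity $c_i^a = [x_p, c_j^a]$ combined with a word $v_a$ for $c_j^a$ provided by the inductive hypothesis (with $Z(v_a) \leq w(c_j)$) gives $W_a := x_p\, v_a\, x_p^{-1}\, v_a^{-1}$; deleting the two $x_p^{\pm 1}$ leaves $v_a v_a^{-1}$, which is trivial in $F(n)$, so $Z(W_a) \leq 2 \leq w(c_i)$. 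For the step with $w(c_\ell) \geq 2$, the inductive hypothesis supplies $u_a, v$ representing $c_\ell^a, c_j$ with $Z(u_a) \leq w(c_\ell)$ and $Z(v) \leq w(c_j)$, and $W_a := u_a\, v\, u_a^{-1}\, v^{-1}$ represents $c_i^a$; deleting $Z(u_a)$ letters from each of $u_a$ and $u_a^{-1}$ so as to trivialize each one in $F(n)$ leaves $v v^{-1} = 1$, so
\[
Z(W_a) \leq 2\, Z(u_a) \leq 2\, w(c_\ell) \leq w(c_\ell) + w(c_j) = w(c_i),
\]
where the final inequality is $w(c_\ell) \leq w(c_j)$.

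The principal technical obstacle is rigorously establishing the $RF(n)$-commutation identity for commutators of weight $\geq 2$; this really rests on the structural fact that Lie brackets in the free reduced Lie algebra with any repeated generator vanish, together with the finite nilpotency class of $RF(n)$ to push the resulting error terms all the way to zero. Once that identity is in hand, the induction is straightforward combinatorics on the word representatives as above.
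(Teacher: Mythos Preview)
Your proof is correct, and for the higher-weight case it arrives at essentially the same word $c_\ell c_j^a c_\ell^{-1} c_j^{-a}$ (or a variant) and the same deletion count $2\,RZ(c_\ell)\le 2w(c_\ell)\le w(c_i)$ as the paper. The two differences are worth noting.

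For $1\le i\le n$, your abelianization argument for the lower bound is more elementary than the paper's: the paper instead observes that $\phi(x_i^a)$ is a string link with $\Lambda=|a|$ and invokes the inequalities $\Lambda\le n_h\le RZ$. Your route is self-contained and avoids any link theory.

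For $i>n$, the paper establishes the needed commutation in $RF(n)$ much more directly than your Lie-algebra sketch. It proves by a short induction on weight that every basic commutator $c$ is a product of conjugates of a \emph{single} generator $x_t$ (since $c=[c_\ell,c_j]=(c_\ell c_j c_\ell^{-1})c_j^{-1}$ and $c_j$ is inductively such a product). Then $c_\ell c_j c_\ell^{-1}$ and $c_j^{-1}$ are both products of conjugates of the same $x_t$, hence commute in $RF(n)$ \emph{directly from the defining relation}, giving $c_i^a=(c_\ell c_j^a c_\ell^{-1})c_j^{-a}$ with no appeal to the associated graded. This also renders your separate treatment of the repeated-generator case unnecessary: the paper's argument never assumes $c_\ell$ and $c_j$ use disjoint generators. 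Your graded-Lie justification can be made rigorous, but as you yourself flag, it is the most delicate part of your write-up; the paper's ``product of conjugates of one generator'' observation dissolves that obstacle in one line.
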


\begin{proof}
First suppose $1\leq i\leq n$. Then $c_i^a = x_i^a$ is a length $|a|$ word, and so $RZ(c_i^a)\le |a|$.  In order to see the reverse inequality, note that $\phi(c_i^a)$ is an $(n+1)$-component string link with $\Lambda(\phi(c_i^a)) = |a|$. Thus by inequalities (\ref{eq:lambda}) and (\ref{eqn:RZ bounds nh}) we have $RZ(c_i^a)\ge n_h(\psi(c_i^a)) \ge \Lambda(\psi(c_i^a)) = |a|$.  

The proof of the result when $n<i$ begins with an inductive argument showing that each basic commutator $c_i$ is a product of conjugates of a single generator $x_t$ for some $t$ and that $RZ(c_i)\le w(c_i)$.  When $w(c_i)=1$, then $c_i=x_i$ and so we are done.  When $w(c_i)>1$, then $c_i=[c_\ell, c_j]$ where $w(c_i) = w(c_\ell)+w(c_j)$.  In particular, $w(c_j)<w(c_i)$.  We can therefore inductively assume that $c_j$ is a product of conjugates of some $x_t$.
Thus, 
$
c_i = [c_\ell, c_j] = (c_\ell c_j c_\ell^{-1}) c_j^{-1},
$
and so since both $(c_\ell c_j c_\ell^{-1})$ and $c_j^{-1}$ are products of conjugates of $x_t$, we have now expressed $c_i$ as a product of conjugates of $x_t$.  

Notice next that in the expression $(c_\ell c_j c_\ell^{-1}) c_j^{-1}$ if we make $2 \cdot RZ(c_\ell)$ letter deletions then we may replace the $c_\ell$ and $c_\ell^{-1}$ subwords with words representing the trivial element.  As a consequence $RZ(c_i)\le 2\cdot RZ(c_\ell)$.  By the definition of basic commutators we have $w(c_\ell)\le w(c_j)$ and $w(c_i)=w(c_\ell)+w(c_j)$.  Therefore $w(c_\ell)\le \frac{1}{2} w(c_i)$.  By our inductive assumption, $RZ(c_\ell)\le w(c_\ell)$.  Thus
$$
RZ(c_i)\le 2 \cdot RZ(c_\ell)\le 2 w(c_\ell)\le w(c_i).
$$
This completes the inductive argument.

 Midway through that induction, we saw that both $c_\ell c_j c_\ell^{-1}$ and $c_j^{-1}$ are products of conjugates of a single $x_t$ for some $t$.  By the definition of $RF(n)$, they commute.  Thus, 
$$
c_i^{a} = \left((c_\ell c_j c_\ell^{-1}) c_j^{-1}\right)^{a} = (c_\ell c_j ^{a}c_\ell^{-1}) c_j^{-a}
$$
Similarly to our inductive argument, by making $2\cdot RZ(c_\ell)$ letter deletions, the $c_\ell$ and $c_{\ell}^{-1}$ subwords appearing in the expression above can be reduced to words representing the trivial element of $RF(n)$.  Thus, just as in the inductive argument, $RZ(c_i^a)\le 2 \cdot RZ(c_\ell)\le 2 w(c_\ell)\le w(c_i)$.  
\end{proof}

In \cite[Lemma 1.3]{HL1}, Habegger-Lin prove that the reduced free group $RF(n)$ is nilpotent of class $n$. In the proof of Theorem~\ref{thm: compute nh}, we will use a simple expression for elements of $RF(n)$ that is obtained by combining their result with the following classical theorem; see e.g.~\cite[Theorem~5.13A]{MKS}.

\begin{theorem}[P.~Hall's Basis Theorem]\label{thm:Hall}
Any $\gamma\in F(n)/F(n)_{n+1}$ can be expressed as 
\[
\gamma = c_1^{a_1}c_2^{a_2}\dots c_N^{a_N}\in F(n)/F(n)_{n+1}
\]
where $N$ is the number of basic commutators of weight at most $n$ and $a_1,\dots, a_N\in \Z$.
\end{theorem}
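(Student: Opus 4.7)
The plan is to prove the theorem via \emph{P. Hall's collection process}, a classical technique that systematically rewrites any word in the free group into a canonical product of basic commutators in the prescribed Hall ordering.

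I would begin with the observation that any element $\gamma \in F(n)$ is represented by a word in the generators $x_1, \ldots, x_n$ and their inverses, which are precisely the basic commutators of weight~$1$. Writing $\gamma$ as such a word, the goal is to rearrange it into the form $c_1^{a_1} c_2^{a_2} \cdots c_N^{a_N}$ with basic commutators appearing in increasing Hall order (and $N$ the number of basic commutators of weight at most $n$). The central rewriting rule is the identity
\[
c_j c_i = c_i c_j \cdot [c_j, c_i]^{-1},
\]
where $[c_j, c_i] = c_j c_i c_j^{-1} c_i^{-1}$. Whenever a pair $c_j c_i$ appears out of Hall order (that is, with $i < j$ but $c_j$ to the left of $c_i$), applying this identity moves $c_i$ leftward past $c_j$ at the cost of introducing the commutator $[c_j, c_i]$, which has weight $w(c_i) + w(c_j)$, strictly greater than either factor.

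The key structural claim, which I would verify carefully, is that the commutator $[c_j, c_i]$ appearing in each rewriting step can itself be expressed, modulo $F(n)_{w(c_i)+w(c_j)+1}$, as a product of basic commutators. For $i<j$ this is almost immediate: if $c_j$ is itself $[c_r, c_s]$, the Hall condition $r \leq i$ is precisely what ensures $[c_j,c_i]=[[c_r,c_s],c_i]$ is a basic commutator. In the remaining cases, one applies the Jacobi and Hall--Witt identities inside the abelian quotient $F(n)_m/F(n)_{m+1}$ to re-express arbitrary commutators of weight $m$ as $\Z$-linear combinations of basic commutators of weight $m$. Termination of the process follows from the fact that in $F(n)/F(n)_{n+1}$, every commutator of weight $> n$ vanishes, so only finitely many strata need to be processed.

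I would organize the argument as a double induction. The outer induction is on the weight $k$, proving that every element of $F(n)/F(n)_{k+1}$ has the claimed canonical form using basic commutators of weight at most $k$: the base case $k=1$ reduces to writing an element of the abelianization $F(n)/F(n)_2 \cong \Z^n$ in terms of $x_1,\dots,x_n$, and the inductive step uses that $F(n)_k/F(n)_{k+1}$ is abelian (a standard consequence of the Jacobi and Hall--Witt identities) so that any representative of a coset can be freely permuted modulo higher-weight correction terms. The inner induction, within a fixed weight stratum, is on the Hall ordering and uses the collection move above to bring the smallest out-of-place basic commutator into position. The main obstacle is the auxiliary lemma re-expressing arbitrary weight-$k$ commutators as products of basic commutators of weight $k$; I would prove this in parallel with the outer induction, so that at stage $k$ we have simultaneous control over both the canonical form in $F(n)/F(n)_{k+1}$ and the spanning property of basic commutators in $F(n)_k/F(n)_{k+1}$.
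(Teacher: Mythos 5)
The paper does not prove this statement at all: it is quoted as a classical theorem with a pointer to \cite[Theorem~5.13A]{MKS}, so there is no in-paper argument to compare against. Your sketch is exactly the standard Hall collection-process proof found in that reference (and in Hall's and Hall's treatments), and its structure is sound: collect out-of-order letters, absorb the resulting commutators into higher weight strata, and terminate because weight~$>n$ commutators die in $F(n)/F(n)_{n+1}$. Two caveats, neither fatal to the plan but both places where a full write-up would need care: your rewriting identity has a convention slip (with $[a,b]=aba^{-1}b^{-1}$ one has $c_jc_i=[c_j,c_i]\,c_ic_j$, with the correction term on the left, or $c_jc_i=c_ic_j[c_j^{-1},c_i^{-1}]$; this only matters modulo higher weight, where it washes out), and the genuine mathematical content of the theorem---that basic commutators of weight $m$ span $F(n)_m/F(n)_{m+1}$, which requires the Jacobi/Hall--Witt bookkeeping---is correctly identified but deferred to an auxiliary lemma rather than carried out. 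Also note that $F(n)_m/F(n)_{m+1}$ is abelian simply because $[F(n)_m,F(n)_m]\subseteq F(n)_{2m}\subseteq F(n)_{m+1}$; no identities are needed for that step.
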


We now have everything we need to complete the proof of Theorem~\ref{thm: compute nh}.

\begin{proof}[Proof of Theorem~\ref{thm: compute nh}]
 
Let $L$ be a 2-component link in $S^3$. Since the linking number is a complete homotopy invariant for 2-component links~\cite{M1}, we have that $n_h(L) = \Lambda(L)$.

Now let $L$ be an $3$-component link in $S^3$.  Assume first that $\Lambda(L)\neq 0$ and after reordering the components of $L$ and changing the orientations of a component, if needed, we may assume that $\lk(L_2, L_3)>0$.  Let $T\subseteq D^2\times[0,1]$ be a string link with $\widehat{T}=L$. 
Using short exact sequence~\eqref{eq:shortexact}, we have that
$T=s(\psi(T))*\phi(\gamma)\in \mathcal{SLH}_3$ for some $\gamma\in RF(2)$. Note that the linking number of $\psi(T) \in \mathcal{SLH}_2$ is equal to $\lk(L_1,L_2)$.  Therefore $n_h(s(\psi(T))) = |\lk(L_1,L_2)|$.

As mentioned above, the reduced free group $RF(2)$ is nilpotent of class $2$~\cite[Lemma 1.3]{HL1}. Hence by Theorem~\ref{thm:Hall}, we may express $\gamma$ in terms of basic commutators: $\gamma = x_1^a * x_2^b * [x_1,x_2]^c$ for some $a,b,c\in\Z$.  It follows by inspection that $a=\lk(L_1,L_3)$ and $b=\lk(L_2, L_3)>0$. In $RF(2)$, $x_2$ and $x_1x_2x_1^{-1}$ commute as do $x_1$ and~$x_2x_1^{-1}x_2^{-1}$.  Thus,
\begin{eqnarray*}\gamma &=& x_1^a  x_2^b  [x_1,x_2]^c \\
&=&  x_1^a x_2^b ((x_1x_2 x_1^{-1}) x_2^{-1})^c \\
&=& x_1^a  (x_1(x_2 x_1^{-1} x_2^{-1}))^cx_2^b\\
& =& x_1^a  x_1^c (x_2x_1^{-1}x_2^{-1})^c x_2^b \\
&=& x_1^a  x_1^c x_2x_1^{-c}x_2^{b-1} 
\end{eqnarray*}

Deleting $|a| = |\lk(L_1,L_3)|$ instances of $x_1$ and $1+|b-1|= b =  \lk(L_2,L_3)$ instances of $x_2$ reduces this word to $x_1^cx_1^{-c}$ which is the trivial word.   Thus, by using inequality~\eqref{eqn:RZ bounds nh}, we obtain $n_h(\phi(\gamma))\le RZ(\gamma)\le  |\lk(L_1,L_3)|+ |\lk(L_2,L_3)|$.  
Finally, by using the above inequalities combined with inequality~\eqref{eq:closurenh}, we conclude that 
\begin{multline*}n_h(L)\le n_h(T) \le n_h(s(\psi(T)))+n_h(\phi(\gamma))\\
\le |\lk(L_1,L_2)|+|\lk(L_1,L_3)|+|\lk(L_2,L_3)|
 = \Lambda(L).\end{multline*}  This gives the claimed result when $L$ has 3 components and $\Lambda(L)\neq 0$.

Now suppose that the 3-component link $L$ has $\Lambda(L)=0$ and $\mu_{123}(L)\neq 0$. As above, if $T\subseteq D^2\times[0,1]$ is a string link with $\widehat{T}=L$, then by short exact sequence~\eqref{eq:shortexact}, we have that
$T=s(\psi(T))*\phi(\gamma)\in \mathcal{SLH}_3$ for some $\gamma\in RF(2)$. In this case, $\psi(T)$ is a 2-component string link with vanishing linking numbers so that $\psi(T)$ is homotopically trivial.  Furthermore, for some $c\in \Z$,
$$\gamma=[x_1,x_2]^c = (x_1x_2x_1^{-1}x_2^{-1})^c =  x_1^c(x_2x_1^{-1}x_2^{-1})^c =x_1^cx_2x_1^{-c}x_2^{-1}$$
where the third equality follows since $x_1$ commutes with $(x_2x_1^{-1}x_2^{-1})$.  The resulting word reduces to the trivial element of the free group after two letter deletions (specifically, deleting $x_2$ and $x_2^{-1}$).   As we have explained above, this affects $\phi(\gamma)$ by two crossing changes.  Thus, $n_h(L)\le n_h(T)\le2$.  Since $\mu_{123}(L)\neq 0$, $L$ is not homotopically-trivial, and so $n_h(L)>0$.  As $n_h(L)\equiv \Lambda(L)\pmod 2$, we conclude that  $n_h(L)=2$, as claimed.

Next we address the case that $L$ has 3 components and $\Lambda(L) = \mu_{123}(L)=0$.  In this case, Milnor~\cite{M1} concludes that $L$ is link homotopically trivial and so $n_h(L)=0$, as claimed.

We now move on to the proof of the statement concerning links with 4 or more components.  We begin with the inductive assumption that there is a constant $C_n$ so that for every string link $Q\in \mathcal{SLH}_n$ there is an inequality $n_h(Q)\le \Lambda(Q)+C_n$.  Let $T\in \mathcal{SLH}_{n+1}$, then by short exact sequence~\eqref{eq:shortexact}, we have that $T=s(\psi(T))*\phi(\gamma)$ for some $\gamma\in RF(n)$. 

As before, combining the fact that $RF(n)$ is nilpotent of class $n$ with Theorem~\ref{thm:Hall}, we may express $\gamma \in RF(n)$ in terms of basic commutators:
$$
\gamma = c_1^{a_1}c_2^{a_2}\dots c_N^{a_N}
$$
where $N$ is the number of basic commutators of weight at most $n$ and $a_1,\dots, a_N\in \Z$.  By inspection $a_i = \lk(L_{n+1},L_i)$ for $i=1,\dots n$.   Appealing to Lemma~\ref{lem: RZ of basic comm}, we have that
$$
n_h(\phi(\gamma))\le RZ(\gamma)\le \Sum_{i=1}^N RZ(c_i^{a_i}) \le \Sum_{i=1}^n |a_i| +\Sum_{i=n+1}^N w(c_i).
$$
Thus, by the above inequality combined with the inductive hypothesis we get
\begin{eqnarray*}
n_h(T)\le n_h(s(\psi(T)))+n_h(\phi(\gamma))&\le& \Lambda(s(\psi(T)))+C_n+\Sum_{i=1}^n |a_i| +\Sum_{i=n+1}^N w(c_i) \\
&=& \Lambda(T)+C_n+\Sum_{i=n+1}^N w(c_i).
\end{eqnarray*}
Setting $C_{n+1}=C_n+\Sum_{i=n+1}^N w(c_i)$ completes the induction.\end{proof}

\begin{remark}The statement of Theorem~\ref{thm: compute nh} does not give a precise value for the sequence of numbers $C_n$ when $n>3$. We note that by combining the recurrence relation $C_{n+1}=C_n+\sum_{i=n+1}^N w(c_i)$ (from the end of the proof) with Witt's formula for the number of basic commutators of a fixed weight~\cite{Witt37} one can find an upper bound for the $C_n$ constructed in the proof, and these upper bounds could themselves function as the $C_n$ in the statement of the theorem. However, a formula produced this way would be far from sharp. This is because any basic commutator with repeated indices ($[x_2, [x_1,x_2]]$ for instance) is zero in $RF(n)$ and so should not be counted in a formula for $C_n$. Thus to obtain a less crude formula for $C_n$ we would desire a Witt-type formula counting only the number of basic commutators without repeated indices. While such a result might be within reach of current technology, it is definitely beyond the scope of this paper. \end{remark}

\appendix
\section{Freely slicing boundary links}\label{appendix}

Cha-Kim-Powell \cite{Cha-Kim-Powell:2020-1} describe a set of conditions on a link in $S^3$ that ensure the link is freely slice. In Section \ref{sec:homotopictoslice}, we generalized these conditions to links in a general homology $3$-sphere $Y$ and claimed in Theorem~\ref{thm:CKP} that our conditions guaranteed the link was freely slice in the contractible $4$-manifold~$X$ bounded by $Y$. The proof of this is a close imitation of the argument from Cha-Kim-Powell \cite[\textsection 4 \& \textsection 5]{Cha-Kim-Powell:2020-1} and, as such, we only sketch the argument below. An attempt has been made to include enough detail to follow the argument, but without repeating too much of what already appears in \cite{Cha-Kim-Powell:2020-1}.

\medskip

We begin by recalling some terminology and a theorem from Freedman-Quinn \cite{Freedman-Quinn:1990-1}. A \emph{transverse pair} is two copies of $S^2\times D^2$ plumbed together at one point. This model is a neighbourhood of the pair of spheres
\[
(S^2\times\{pt\})\cup(\{pt\}\times S^2)\subseteq S^2\times S^2.
\]
Take the disjoint union $N_1, \dots, N_\ell$ of copies of the transverse pair and perform further plumbings between the copies, possibly including self-plumbings, then map the result into a topological $4$-manifold $W$ via a continuous map that is a homeomorphism to its image. The result of this process is a map $f\colon \coprod_i N_i\to W$ which is called an \emph{immersion of a union of transverse pairs}.

An immersion of a union of transverse pairs is said to have \emph{algebraically trivial intersections} if the images of the further plumbings we performed can be arranged in pairs by Whitney disks in $W$ (that may \emph{a priori} meet $\coprod_i N_i$). Such a map $f$ is called \emph{$\pi_1$-null} if the inclusion induced map $\pi_1\left(f\left(\coprod_i N_i\right)\right)\to \pi_1(W)$ is trivial.

If middle-dimensional homology classes can be represented in this arrangement, then one is able to use a result of Freedman-Quinn \cite[Theorem 6.1]{Freedman-Quinn:1990-1} to conclude that $f$ is $s$-cobordant rel.~boundary to an embedding.  In the particular case of interest to us, this gives the following.

\begin{theorem}
\label{thm:FQ} Suppose $W$ is a compact topological $4$-manifold, bounded by $M_L$, with $\pi_1(W)$  free and generated by the meridians of $L$. Let $f\colon \coprod_i N_i\to W$ be a $\pi_1$-null immersion of a union of transverse pairs with algebraically trivial intersections, and inducing an isomorphism $f_*\colon H_2(\coprod_i N_i)\to H_2(W)$. Then there exists a compact topological 4-manifold $W'$, bounded by $M_L$, with $\pi_1(W')$  free and generated by the meridians of $L$, and a locally flat embedding $f'\colon \coprod_i N_i\into W'$ inducing an isomorphism  $f'_*\colon H_2(\coprod_i N_i)\to H_2(W')$.
\end{theorem}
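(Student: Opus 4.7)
The plan is to apply Freedman-Quinn \cite[Theorem 6.1]{Freedman-Quinn:1990-1} directly. All the algebraic and topological hypotheses required by that theorem are supplied in the statement of Theorem~\ref{thm:FQ}: the map $f$ is a $\pi_1$-null immersion of a union of transverse pairs with algebraically trivial intersections, and $f_*$ is an isomorphism on $H_2$. As noted in the paragraph just before the theorem, the output of Freedman-Quinn~6.1 is an $s$-cobordism rel boundary between $f$ and an embedding, so our task reduces to unpacking this conclusion and reading off the claimed properties of the 4-manifold at the other end of the $s$-cobordism.

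Concretely, the theorem produces a 5-dimensional cobordism $V$ with $\partial V = W \cup_{M_L} (M_L \times [0,1]) \cup_{M_L} (-W')$, where $W'$ is some compact topological 4-manifold, together with a properly embedded 5-dimensional submanifold $P \subseteq V$ homeomorphic to $\coprod_i N_i \times [0,1]$. The restrictions of $P$ to the two ends of $V$ are $f(\coprod_i N_i) \subseteq W$ and $f'(\coprod_i N_i) \subseteq W'$, where $f'$ is an embedding. Moreover $V$ is a product on the boundary, and the inclusions $W \hookrightarrow V$ and $W' \hookrightarrow V$ are simple homotopy equivalences.

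To confirm $W'$ has the asserted properties, first note that since $V$ is a product on the boundary, $\partial W' = M_L$. Next, the inclusion $W \hookrightarrow V$ being a homotopy equivalence gives $\pi_1(V) \cong \pi_1(W)$, and the meridians of $L$ on $M_L \subseteq W$ are identified with those on $M_L \subseteq W'$ via the product structure on $\partial V$. Hence $\pi_1(W')\cong \pi_1(V)\cong \pi_1(W)$ is free and generated by the meridians of $L$. Finally, the composition of isomorphisms $H_2(\coprod_i N_i) \to H_2(W) \to H_2(V) \leftarrow H_2(W')$ shows that $f'_*$ is an isomorphism.

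The main subtlety here, which we sidestep by merely asking for existence of the ambient 4-manifold $W'$ rather than an embedding into the original $W$, is that converting the $s$-cobordism into a product structure would require an application of the topological 5-dimensional $s$-cobordism theorem; such an application requires $\pi_1(W)$ to lie in the class of ``good'' groups, something not currently known to hold for non-abelian free groups. Since the theorem statement permits a possibly new 4-manifold $W'$, we never need this upgrade, and the argument above suffices.
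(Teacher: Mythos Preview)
Your proposal is correct and matches the paper's approach exactly: the paper does not give a separate proof of this theorem, but simply records it as the output of Freedman--Quinn~\cite[Theorem~6.1]{Freedman-Quinn:1990-1}, noting in the preceding paragraph that the conclusion is an $s$-cobordism rel boundary to an embedding. Your unpacking of that $s$-cobordism to verify the stated properties of $W'$, and your remark about why one must allow a new $4$-manifold rather than invoke the $s$-cobordism theorem over a non-abelian free group, are exactly the details the paper leaves implicit.
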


We now follow the standard surgery-theoretic approach to slice $L$, sketched in the introduction. Recall that the 0-surgery on $L$ is denoted by $M_L$.

\begin{proposition}\label{prop:CKP}
Let $L$ be a boundary link with a good disky basis in a homology sphere $Y$, then $M_L$ bounds a compact oriented $4$-manifold $W$ such that
\begin{enumerate}[leftmargin=0.9cm, font=\upshape]\setlength\itemsep{0em}
\item $\pi_1(W)$ is free and generated by the meridians of $L$, and
\item $H_2(W;\Z)$ is free and represented by a $\pi_1$-null immersion of a union of transverse pairs with algebraically trivial intersections.
\end{enumerate}
\end{proposition}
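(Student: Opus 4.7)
The plan is to follow the surgery-theoretic strategy sketched in the introduction, building $W$ directly as the manifold $V_F$ described there. Choose a boundary link Seifert surface $F=F_1\cup\dots\cup F_n$ in $Y$ realizing the given good disky basis for $L$, and push $F$ into the interior of the contractible $4$-manifold $X$ bounded by $Y$. Excise an open tubular neighborhood to obtain $X_F$. Choose a collection $H=H_1\cup\dots\cup H_n$ of $3$-dimensional handlebodies with $\partial H_i$ homeomorphic to $F_i$, and set
\[
W := X_F \cup_{\operatorname{id}\times \varphi} (S^1\times H),
\]
glued along the $S^1\times(F_i\smallsetminus\mathring{D}^2)$ portion of $\partial X_F$, for a carefully chosen $\varphi$. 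The boundary of $W$ is $M_L$ by construction. Since $X$ is contractible, a van Kampen computation identifies $\pi_1(X_F)$ as free on the meridians of $L$ together with meridians of a basis for $H_1(F;\Z)$. The $S^1\times H$ gluing can be arranged (following \cite{Cha-Kim-Powell:2020-1}) to precisely kill the meridians coming from the basis curves of $F$, so $\pi_1(W)$ is free on the meridians of $L$ as required by condition (1).

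Next the plan is to construct the transverse pair configuration representing a basis of $H_2(W;\Z)$. Using the good disky basis $\{a_i,b_i\}_{1\leq i\leq g}$, Definition~\ref{def:disky} provides immersed disks $\{\Delta_j^+,\Delta_i \mid 1\leq j\leq 2g, 1\leq i\leq g\}$ in $X$, which can be perturbed to lie in $X_F$. Each basis curve, transported via $\varphi$ to a curve on $\partial H_i$, bounds a properly embedded disk inside $H_i$; crossing these with $S^1$ gives disks in $S^1\times H$ with boundary any chosen basis curve. Capping off $\Delta_i^+$ and $\Delta_{g+i}^+$ with such interior disks (chosen pairwise disjoint in $S^1\times H$) produces immersed $2$-spheres $A_i,B_i$ in $W$. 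Each pair $(A_i,B_i)$ meets transversely in a single point, namely the symplectic intersection $a_i\cap b_i$ on $F$. A Mayer--Vietoris computation using $X_F$ and $S^1\times H$ identifies $H_2(W;\Z)$ as free abelian of rank $2g$ with basis $\{[A_i],[B_i]\}$, and so these transverse pairs satisfy the representation hypothesis of condition (2).

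To finish, the plan is to verify the $\pi_1$-null and algebraically trivial intersections conditions on the configuration $\bigcup_i (A_i\cup B_i)$. The $\pi_1$-null condition can be arranged by ensuring all capping disks $\Delta_j^+$ lie in a region of $X_F$ whose inclusion into $W$ is null-homotopic on fundamental groups, which is controlled by the choice of base paths to a common basepoint. For the algebraically trivial intersections, the key inputs are the \emph{null matrix} form of the Seifert matrix (Definition~\ref{def:goodbasis}) and the auxiliary disks $\Delta_i$. Intersections between spheres coming from different basis pairs are governed by linking numbers of the boundary curves, which are precisely the entries of the Seifert matrix; the null matrix structure forces these to cancel algebraically. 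Intersections within a single pair or involving self-intersections are paired up by Whitney disks, and the essential role of the disky basis is that the disks $\Delta_i$ bounded by $b_i'$ provide the geometric duals from which these Whitney disks can be assembled.

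I expect the main obstacle to be the careful bookkeeping of intersection points in the verification of condition (2). Each intersection point among the spheres $\{A_i,B_i\}$ must either be cancelled by an oppositely signed partner (as dictated by the null matrix) or paired by a Whitney disk constructed from the geometric duals $\Delta_i$. This analysis is carried out in detail for $L\subseteq S^3$ in \cite[\S 4--5]{Cha-Kim-Powell:2020-1}; the extension to a general homology sphere $Y$ is then a mechanical check that the argument never uses $X=D^4$ specifically, since the construction of $W$ and all subsequent algebraic computations take place inside $W$ itself and are indifferent to which particular contractible $4$-manifold $Y$ bounds.
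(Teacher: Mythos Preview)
Your overall strategy matches the paper's, but there is a genuine gap in your construction of the second family of spheres. In $S^1\times H$, only \emph{half} of a symplectic basis of $\partial H$ bounds embedded disks: if $\varphi$ is chosen so that the $a_i$ bound meridional disks in $H$ (which is what is needed to get $\pi_1(W)$ free on meridians of $L$), then the $b_i$ do \emph{not} bound disks in $H$, nor do $b_i'$ or $(b_i')^+$. So your $B_i$, obtained by ``capping off $\Delta_{g+i}^+$ with such interior disks'', does not exist as described. This is not a minor bookkeeping issue; it is the reason the construction of the second sphere in each pair is genuinely more elaborate.

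In the paper's proof (which uses a handle description of $W$ equivalent to your $V_F$), the sphere $\Sigma_{2i}$ is assembled from four pieces: the core of a $2$-handle attached along a component $\gamma_i$ of the Bing double of $b_i$, a planar surface $P_i$ cobounded by $\gamma_i$, $b_i'$, and $(b_i')^+$, and then \emph{both} immersed disks $\Delta_{g+i}^+$ and $\Delta_i$ capping the remaining two boundary circles. In particular, the extra disks $\Delta_i$ in Definition~\ref{def:disky} are not used to build Whitney disks as you suggest; they are constituent pieces of the spheres themselves. The disjointness requirements in the good disky basis (all disks pairwise disjoint except possibly among the $\Delta_j^+$) are tailored exactly so that the resulting $\Sigma_{2i-1}\cup\Sigma_{2i}$ form transverse pairs with the correct intersection pattern. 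A smaller point: your description of $\pi_1(X_F)$ is off; since $X$ is contractible and $F$ is a codimension-two submanifold with connected components, $\pi_1(X_F)$ is already free on the meridians of the $F_i$, which coincide with the meridians of $L$.
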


The argument we now use is almost identical to that appearing in \cite[Section 5]{Cha-Kim-Powell:2020-1}.

\begin{proof}[Summary of the proof of Proposition~\ref{prop:CKP}]
Let $F=F_1\cup\dots\cup F_n$ be a boundary link Seifert surface for $L$ and let $\{a_i, b_i\}_{1\leq i \leq g}$ be a good disky basis, with $$\left\{\Delta_j^+,\Delta_i \mid 1\leq j \leq 2g, 1\leq i \leq g\right\}$$  the immersed disks as in Definition~\ref{def:disky}.  Recalling these conditions, for each $i$, $\bdry \Delta_{i}^+ = a_i$, $\bdry \Delta^+_{g+i} = (b_i')^+$, and $\bdry \Delta_i = b_i'$, where $b_i'$ is the result of pushing $b_i$ off $F$ such that it has zero linking with $a_i$, and  $(b_i')^+$ is a zero linking parallel copy of $b_i'$.  These disks are all disjoint except that the disks $\{\Delta_j^+\}_{1\leq j \leq 2g}$ might intersect each other.  Write $X$ for the contractible $4$-manifold bounded by $Y$.

 \begin{figure}[h]
\begin{picture}(325,105)
\put(0,5){\includegraphics[width=112.5mm]{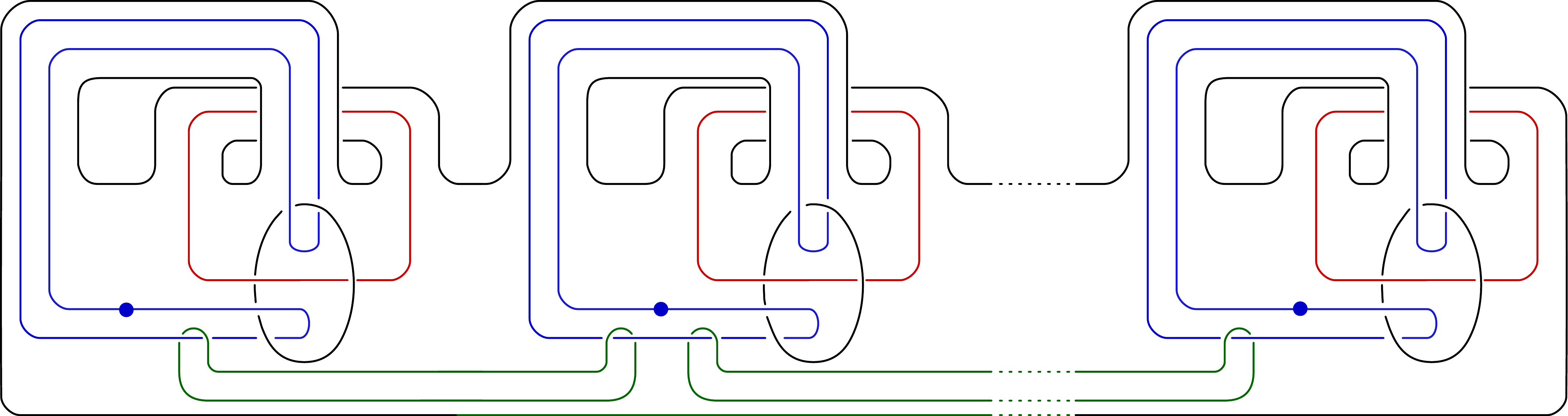}}
\put(13,30){\textcolor{diagramblue}{$\beta_1$}}
\put(85,42){\textcolor{diagramred}{$a_1$}}
\put(73,24){\textcolor{black}{$\gamma_1$}}
\put(85,17){\textcolor{diagramgreen}{$\delta_1$}}

\put(117,30){\textcolor{diagramblue}{$\beta_2$}}
\put(189,42){\textcolor{diagramred}{$a_2$}}
\put(177,24){\textcolor{black}{$\gamma_2$}}
\put(189,17){\textcolor{diagramgreen}{$\delta_2$}}

\put(245,31){\textcolor{diagramblue}{$\beta_g$}}
\put(304,44){\textcolor{diagramred}{$a_g$}}
\put(303,24){\textcolor{black}{$\gamma_g$}}
\put(219,17){\textcolor{diagramgreen}{$\delta_{g-1}$}}

\end{picture}
\caption{Curves $a_i$, $\beta_i$, $\gamma_i$ and $\delta_i$ sitting in a produce neighbourhood of a Seifert surface for $L$.  Attaching a 1-handle using the dotted $\beta_i$ curves and attaching 2-handles along the 0-framings of $a_i$, $\gamma_i$, and $\delta_i$.}\label{fig:ManifoldW1}
\end{figure}

For each $i$, let $\beta_i\cup \gamma_i$ be the Bing double of $b_i$ appearing in Figure~\ref{fig:ManifoldW1}. 
Attach 1-handles to $X$ along $\beta_i$ and 2-handles to $X$ along the 0-framings of $a_i$, $\gamma_i$, and $\delta_i$ to get a 4-manifold~$W$.  A straightforward argument shows that $W$ has boundary $M_L$ and has fundamental group freely generated by the meridians of $L$; see~\cite[Claim A]{Cha-Kim-Powell:2020-1} for details. Clearly $H_2(W;\Z)\cong \Z^{2g}$. This basis is generated by framed immersed spheres $\Sigma_1,\dots,\Sigma_{2g}$ described as follows.  For each $i$, take $\Sigma_{2i-1}$ to be the union of $\Delta_i^+$ and the core of the 2-handle attached to $a_{i}$. For each $i$, we can and will assume $b_i'$ and $(b'_i)^+$ lie on the gray surface bounded by $\gamma_i$ depicted in Figure~\ref{fig:ManifoldW} (left). We use this to define a planar surface $P_i$ bounded by $\gamma_i$, $b_i'$ and $(b_i')^+$ as in Figure~\ref{fig:ManifoldW} (right). Take $\Sigma_{2i}$ to be the union of $\Delta_{g+i}^+$, $\Delta_i$, the core of the handle attached to $\gamma_i$, and  $P_i$; cf.~\cite[Claim B]{Cha-Kim-Powell:2020-1}.  
   
   \begin{figure}[h]
\begin{picture}(360,130)
\put(0,5){\includegraphics[height=44.4mm]{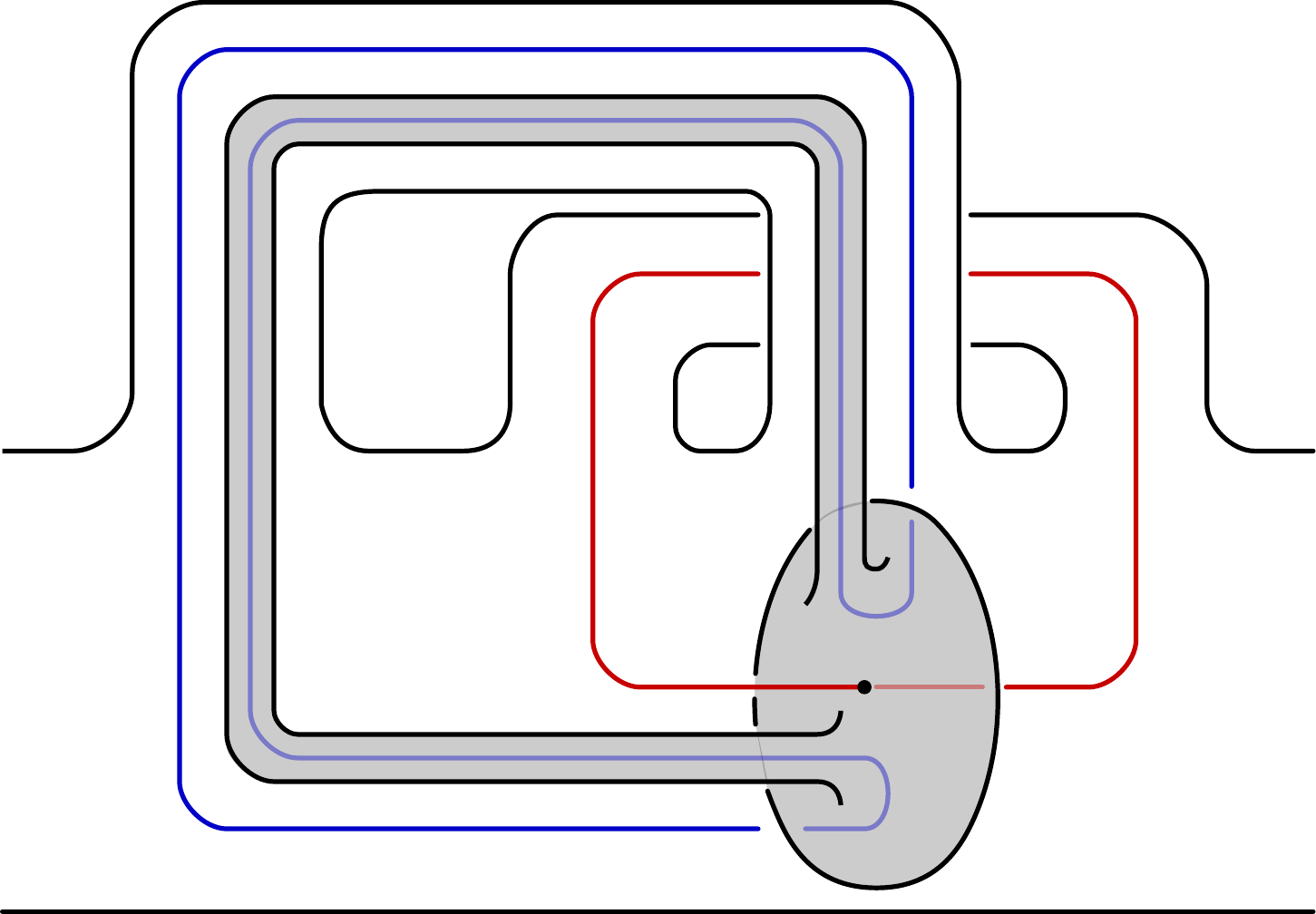}}
\put(15,40){\textcolor{diagramblue}{$\beta_i$}}
\put(160,45){\textcolor{diagramred}{$a_i$}}
\put(140,17){\textcolor{black}{$\gamma_i$}}
\put(220,5){\includegraphics[height=40.7mm]{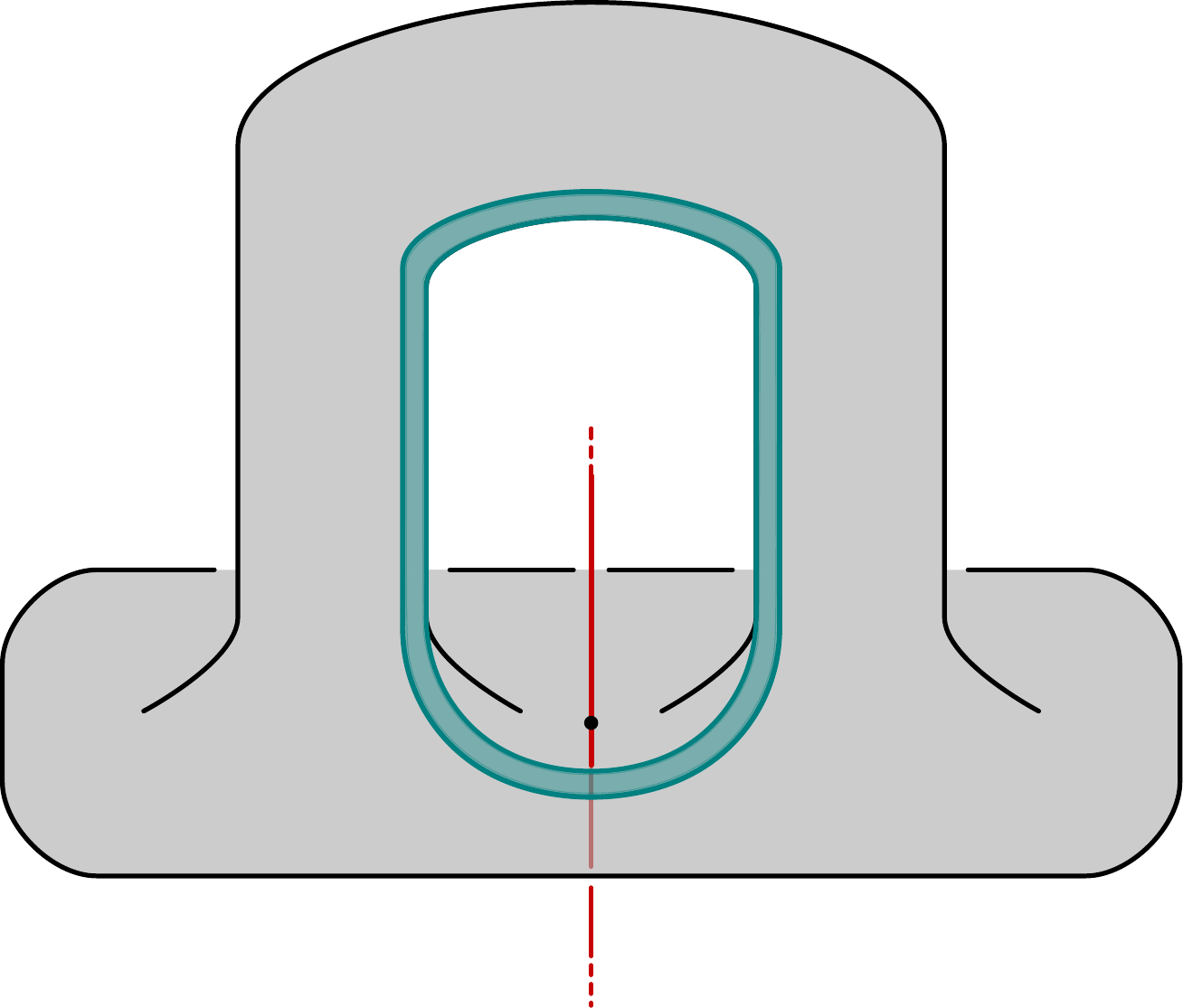}}
\put(325,12){\textcolor{black}{$\gamma_i$}}
\put(277,8){\textcolor{diagramred}{$a_i$}}
\put(270,103){\textcolor{teal}{$b_i'$}}
\put(270,83){\textcolor{teal}{$(b'_i)^+$}}
\put(345,86){\textcolor{teal}{$A_i$}}
\put(345,85){\color{teal}\line(-5,-2){38}}
\end{picture}
\caption{Left: A section of the surface $F$ containing $\{a_i,b_i\}$ ($b_i$ not depicted). The curves $\beta_i$ and $\gamma_i$ form a bing double of the curve $b_i$ in a neighbourhood of $F$.  A gray genus one surface disjoint from $\beta_i$ with boundary $\gamma_i$ is also depicted. Right: A close-up of the gray surface. The annulus $A_i$ on the gray surface with boundary $b_i$ and $b'_i$ is depicted. The complement of the interior of the annulus in the gray surface is the planar surface $P_i$. 
}\label{fig:ManifoldW}
\end{figure}
   
For each $i$, a regular neighbourhood of $\Sigma_{2i-1}\cup \Sigma_{2i}$ can now be viewed as an immersed transverse pair. The same arguments from \cite[Claim C]{Cha-Kim-Powell:2020-1} and \cite[Claim D]{Cha-Kim-Powell:2020-1} now reveal that $\cup_{i=1}^{2g} \Sigma_i$ has algebraically trivial intersections and is $\pi_1$-null.
\end{proof}

Finally, we can confirm that Cha-Kim-Powell \cite[Theorem A]{Cha-Kim-Powell:2020-1} generalizes as claimed.

\begin{proof}[Proof of Theorem \ref{thm:CKP}] Let $W$ be the $4$-manifold and $f\colon \coprod_i N_i\to W$ the immersion of a union of transverse pairs representing $H_2(W;\Z)$ described in Proposition \ref{prop:CKP}. Applying Theorem \ref{thm:FQ}, we obtain $W'$ and $f'$. 
Note that 
the image of $f'$ consists of a tubular neighbourhood of locally flat embedded $2$-spheres representing generators for $H_2(W';\Z)\cong H_2(W;\Z)$. These embedded 2-spheres come in transverse pairs and we now perform surgery on one sphere from each transverse pair. Since the second sphere from each transverse pair intersected the surgered sphere geometrically once, these surgeries preserve $\pi_1(W')$.  Thus, we obtain $W''$ with boundary $M_L$, with $H_2(W'';\Z)=0$, and with $\pi_1(W'')$ freely generated by the meridians of $L$. Now attach $2$-handles to $M_L$ along the meridians of the link components, with framing so that the $0$-surgery is reversed. This has $Y$ as the effect of surgery, and by glueing across meridians we ensure that $\pi_1(W'')=0$. The resultant $4$-manifold is contractible and has boundary $Y$. The link $L$ has slice disks given by the cocores of the 2-handles we have just attached so it is slice and moreover freely slice as $\pi_1(W'')$ is free.
\end{proof}

\bibliographystyle{abbrv}

\bibliography{LinksHomotopicToSlice}

\end{document}